\numberwithin{equation}{section}
\newcommand{\beq}{\begin{equation}}
\newcommand{\eeq}{\end{equation}}
\newcommand{\bal}{\begin{align}}
\newcommand{\eal}{\end{align}}
\renewcommand{\(}{\left(}
 \renewcommand{\)}{\right)}
\renewcommand{\[}{\left[}
 \renewcommand{\]}{\right]}
\newtheorem{theorem}{Theorem}[section]
\newtheorem{lemma}[theorem]{Lemma}
\newtheorem{proposition}[theorem]{Proposition}
\newtheorem{remark}[theorem]{Remark}
\theoremstyle{definition}
\newcommand{\Div}{\operatorname{div}}
\newcommand{\curl}{\operatorname{curl}}
\renewcommand{\epsilon}{\varepsilon}
\providecommand{\abs}[1]{\left\vert#1\right\vert}
\providecommand{\norm}[1]{\left\Vert#1\right\Vert}
\providecommand{\Rn}[1]{\mathbb{R}^{#1}}
\providecommand{\ns}[1]{\norm{#1}^2}
\DeclareMathOperator{\diverge}{div}
\providecommand{\Rn}[1]{\mathbb{R}^{#1}}
\providecommand{\norm}[1]{\left\Vert#1\right\Vert}
\def\dt{\partial_t}
\def\na{\nabla}
\def\pa{\partial}
\def\kep{{\kappa\varepsilon}}
\def\nab{\nabla}
\def\al{\alpha}
\def\dt{\partial_t}
\def\dtt{ \frac{d}{dt}}
\def\hal{\frac{1}{2}}
\def\ls{\lesssim}
\def\p{\partial}
\def\dak{\Delta_{\ak}}
\def\dakn{\Delta_{\mathcal{A}^{\kappa(n)}}}
\def\dakt{\Delta_{{\tilde{\mathcal{A}}}^\kappa}}
\def\naba{\nab_{\mathcal{A}}}
\def\akt{{\tilde{\a}^\kappa}}
\def\akte{{\tilde{\a}^\kep}}
\def\diva{\diverge_{\mathcal{A}}}
\def\divak{\diverge_{\ak}}
\def\divakn{\diverge_{\mathcal{A}^{\kappa(n)}}}
\def\divakt{\diverge_{{\tilde{\a}^\kappa}}}
\def\jk{J_\kappa}
\def\jkt{\tilde{J}^\kappa}
\def\dis{\displaystyle}
\def\eps{\varepsilon}
\def\a{\mathcal{A}}
\def\B{\mathfrak{B}}
\def\i{\mathcal{I}}
\def\fj1{\mathcal{J}^{-1}}
\def\n{\mathcal{N}}
\def\nk{{\mathcal{N}^\kappa}}
\def\nkt{{\tilde{\mathcal{N}}}^\kappa}
\def\nkn{\mathcal{N}^{\kappa(n)}}
\def\E{\mathcal{E}}
\def\D{\mathcal{D}}
\def\fg{\mathcal{G}}
\def\fgk{\mathcal{G}^{\kappa}}
\def\fgl{\mathcal{G}^{\lambda}}
\def\bp{\bar\partial}
\def\nak{\nabla_{\mathcal{A}^\kappa}}
\def\nakn{\nabla_{\mathcal{A}^{\kappa(n)}}}
\def\nakt{\nabla_{{\tilde{\mathcal{A}}}^\kappa}}
\def\ak{{\mathcal{A}^\kappa}}
\def\jk{J^\kappa}
\def\jkt{\tilde{J}^\kappa}
\def\pek{P\left(\fgk(T)\right)}
\def\pekl{P\left(\fgl(T)\right)}
\def\fk{{\psi^\kappa}}
\def\me{\Lambda_\infty}
\def\mel{\Lambda_\infty^\lambda}
\def\tq{{\tilde q}}
\def\tth{{\tilde \theta}}
\def\tthe{\tilde \theta}
\def\trho{\frac{\tq}{R\tth}}
\def\GG{\mathfrak{G}}
\def\GE{\mathfrak{E}}
\def\GF{\mathfrak{F}}
\def\TK{\tilde{\mathfrak{E}}}
\def\TF{\tilde{\mathfrak{F}}}
\def\sT{\sup_{[0,T]}}
\def\lsw{\,\widetilde\ls\,}
\def\MCF{\mathcal{F}}
\def\kd{{\kappa,\delta}}
\def\Mzkd{M_0}
\title[Free-boundary Euler-Fourier System]{Well-posedness and Low Mach Number Limit of the Free Boundary Problem for the Euler--Fourier System}
\author{Xumin Gu}
\address{School of Mathematics\\
Shanghai University of Finance and Economics\\
Shanghai 200433, China}
\email[X. Gu]{gu.xumin@shufe.edu.cn}
\author{Yanjin Wang}
\address{School of Mathematical Sciences\\
	Xiamen University\\
	Xiamen, Fujian 361005, China}
\email[Y. J. Wang]{yanjin$\_$wang@xmu.edu.cn}
\date{\today}
\thanks{X. Gu was supported by the National Natural Science Foundation of China (12031006). Y. J. Wang was supported  by the National Natural Science Foundation of China (12171401, 12231016).}
\keywords{Free boundary problem; Compressible Euler equations; Fourier law; Local well-posedness; Low mach number limit.}
\subjclass[2020]{35B40; 35Q31; 35Q35; 35R35; 76N10}
\begin{document}

\begin{abstract}
We consider the free boundary problem for the Euler--Fourier system that describes the motion of compressible, inviscid and heat-conducting fluids. The effect of surface tension is neglected and there is no heat flux across the free boundary. We prove the local well-posedness of the problem in Lagrangian coordinates under the Taylor sign condition. The solution is produced as the limit of solutions to a sequence of tangentially-smoothed approximate problems, where the so-called corrector is crucially introduced beforehand in the temperature equation so that the approximate initial data satisfying the corresponding compatibility conditions can be constructed. To overcome the strong coupling effect between the Euler part and the Fourier part in solving the linearized approximate problem, the temperature equation is further regularized by a pseudo-parabolic equation. Moreover, we prove the uniform estimates with respect to the Mach number of the solutions to the free-boundary Euler--Fourier system with large temperature variations, which allow us to justify the convergence towards the free-boundary inviscid low Mach number limit system by the strong compactness argument.
\end{abstract}


\maketitle


\section{Introduction}

\subsection{Eulerian formulation}

We consider the equations for a compressible, inviscid and heat-conducting fluid, $i.e.,$ the Euler--Fourier system:
\beq\label{MHD00}
\begin{cases}
\dt \varrho+\Div(\varrho u)=0 &\text{in }\Omega(t)\\
\partial_t (\varrho {u})  +\Div(\varrho u\otimes u)+\nabla p = 0& \text{in } \Omega(t)
\\\partial_t (\varrho e)  +\Div(\varrho u e)+p\Div u- \mu \Delta \vartheta=0& \text{in } \Omega(t),
\end{cases}
\eeq
where $\varrho>0$, $u $ and $\vartheta>0$ are the density,  velocity  and temperature, respectively, of the fluid, which occupies the moving bounded domain $\Omega(t)\subset\mathbb{R}^3$, and $\mu>0$ is the thermal conduction coefficient.
We assume that the fluid obeys the ideal polytropic law, so the pressure $p$ and the internal energy $e$ are given by
\beq
p=R\varrho \vartheta,\quad
 e=c_v \vartheta,
\eeq
with constants $R,c_v>0$. The surface  $\Sigma(t):=\pa \Omega (t)$ is free and moves with the fluid:
\begin{equation} \label{mhd1}
V= u\cdot n \quad\text{on } \Sigma(t),
\end{equation}
where $V$ represents the normal velocity of $\Sigma(t)$ and $n$ denotes the outward unit normal of $\Sigma(t)$. We do not take into account the effect of surface tension and assume that there is no heat flux across the free surface, so
\begin{equation} \label{mhd22}
  p=\bar{p},\quad  \nabla\vartheta\cdot n=0 \quad\text{on }\Sigma(t),
\end{equation}
where $\bar{p}>0 $ is the constant pressure of the outside.
Finally, we impose the initial condition
\begin{equation} \label{mhd3}
(\varrho(0),u(0), \vartheta(0))=(\varrho_0,u_0,\vartheta_0) \quad\text{on }\Omega(0)=\Omega_0.
\end{equation}

The investigation of compressible, inviscid and heat-conducting flows has significantly important applications in many physical situations, such as astrophysics, nuclear and thermonuclear processes and other high-temperature hydrodynamics \cite{ZR,MM,Sh}, where the heat-conductivity plays a prominent role as compared to the ignorable effect of viscosity.
To our best knowledge, despite its physical importance, there is no existing well-posedness theory for the free boundary problem of the Euler--Fourier system, and all the previous results for the free-boundary compressible Euler equations focus on the isentropic flows or the non-isentropic flows without heat conduction. The purpose of this paper is to establish the local well-posedness theory for the problem \eqref{MHD00}--\eqref{mhd3} with $\mu>0$. 

\subsection{Lagrangian reformulation}\label{lagrangian}

We will transform the free boundary problem \eqref{MHD00}--\eqref{mhd3} to one on a fixed reference domain $\Omega$ by using Lagrangian coordinates. Define the flow map $\eta(t,x)\in\Omega(t)$ as the solution to
\begin{equation}
\begin{cases}
\partial_t\eta(t,x)=u(t,\eta(t,x)),\quad t>0,\ x\in\Omega
\\ \eta(0,x)=\eta_0(x),\quad x\in\Omega,
\end{cases}
\end{equation}
where  $\eta_0:\Omega\rightarrow \Omega_0$ is a  diffeomorphism and maps $\Sigma:=\partial\Omega$ to $\Sigma(0).$

We assume that $\eta(t,\cdot)$ is invertible  and define the Lagrangian unknowns as:
\beq
(\rho,v,q,\theta)(t,x):=(\varrho,u,p,\vartheta)(t,\eta(t,x)).
\eeq
Then the problem \eqref{MHD00}--\eqref{mhd3} becomes
\begin{equation}\label{EFv}
\begin{cases}
\partial_t\eta =v &\text{in } \Omega \\
\dt  q  -\frac{q}{\theta}\partial_t\theta+ q\diva  v=0&\text{in } \Omega \\
\frac{q}{R\theta} \partial_t v  +\naba q  =0  &\text{in } \Omega \\
\frac{ c_v q}{R\theta} \partial_t\theta  + q \diva v -\mu\Delta_\a \theta=0  &\text{in } \Omega \\
q=\bar{p},\quad  \nabla_\a \theta\cdot \n=0&\text{on }\Sigma \\
 \(\eta(0),q(0), v(0), \theta(0)\)  =(\eta_0,q_0, v_0, \theta_0),
 \end{cases}
\end{equation}
where the matrix $\mathcal A=\a(\eta):=(\nabla\eta)^{-T}$,   $(\naba)_i=\partial_i^{\a}:=(\a\nabla)_i$, $\diva: =\naba\cdot$, $\Delta_\a : =\diva\naba  $ and $\n=\n(\eta):= \a N$ with $N$ the outward unit normal of $\Sigma$. We will also denote $J=J(\eta):=\det(\nabla\eta)$ for the Jacobian. Note that one may rewrite the second equation in \eqref{EFv} as, by using the fourth equation,
\beq\label{q10}
\dt  q   +\(1+\frac{R}{c_v}\) q\diva  v=\frac{\mu R}{c_v}\Delta_\a \theta\quad\text{in } \Omega.
\eeq
Both of these two forms will be used in our analysis.

In \eqref{EFv} we have chosen to work instead with the pressure $q$ due to that we have the boundary condition for $q$, and the density $\rho$ is determined by $
\rho=  {q}/{(R\theta)}$. For the case of the Dirichlet boundary condition for the temperature:
\begin{equation} \label{mhd23}
  \theta   =\bar\theta \quad\text{on }\Sigma,
\end{equation}
where $\bar{\theta}>0 $ is the constant temperature of the outside, one has the following boundary condition for the density:
\begin{equation} \label{mhd24}
  \rho   =\bar\rho:=\frac{\bar{p}}{R\bar\theta}>0 \quad\text{on }\Sigma;
\end{equation}
in this case one may work directly with the density $\rho$, which satisfies
\beq
\dt\rho+\rho \diva v=0\quad\text{in }\Omega.
\eeq
As will be seen later, the construction of local solutions is much more involved for the case of the Neumann boundary condition for the tempeature considered in this paper, which induces a strong coupling effect between the Euler part and the Fourier part.

\subsection{Related works}

The early works for the free-boundary Euler equations focused on the incompressible and irrotational fluids, which began with the pioneering work of Nalimov \cite{N} of the local well-posedness for the small initial data and became very active since the breakthrough of Wu \cite{W1,W2} for the large initial data. For the general free-boundary incompressible Euler equations without irrotational assumption, the first local well-posedness  was obtained by Christodoulou and Lindblad \cite{CL} and Lindblad \cite{Li0,Li} for the case without surface tension and by Coutand and Shkoller \cite{CS1} for the case with (and without) surface tension, and we refer to Shatah and Zeng \cite{SZ} and Zhang and Zhang \cite{ZZ} for the different methods.  We also refer to Wang, Zhang, Zhao and Zheng \cite{WZZZ} for the local well-posedness of the case without surface tension in low regularity spaces and  a Beale-Kato-Majda type break-down criterion and for more thorough literature.

For the free-boundary compressible Euler equations (away from vacuum), the local well-posedness of the case without surface tension was proved by Lindblad \cite{Li1,Li2} for the isentropic case and by Trakhinin \cite{T} for the non-isentropic case without heat conduction. Coutand, Hole and Shkoller \cite{CHS} established the local well-posedness for the isentropic case with and without surface tension and showed the zero surface tension limit. We also refer to Luo and Zhang \cite{LZh} for a more recent study of the locall well-posedness for the isentropic case without surface tension and for more thorough literature. However, there is no existing local well-posedness theory for the free-boundary Euler--Fourier system. The only related work on the free boundary problem for inviscid heat-conducting flows is Luo and Zeng \cite{LZ}, which proved the a priori estimates for the inviscid low mach limit system without surface tension (cf. \eqref{mach} below) for the case of the Dirichlet boundary condition for the temperature, and the construction of local solutions remains unknown.

With the local well-posedness for both incompressible and compressible fluids in hand, it is natural to consider the low mach number limit problem. Despite it has been studied extensively since Ebin \cite{E0} and Klainerman and Majda \cite{KM1,KM2}, there are only a few works on the low Mach number limit problem in the context of free boundary problems; indeed, all the previous works deal with the incompressible limit for the isentropic case with the well-prepared initial data. Lindblad and Luo \cite{LL} proved the estimates uniform in the sound speed for the free-boundary compressible isentropic Euler equations without surface tension, and Disconzi and Luo \cite{DL} derived the uniform estimates for the case with  surface tension. Wang, Zhang and Zhao \cite{WZZ} justified the incompressible limit for the free-boundary compressible isentropic Euler equations without surface tension, and we also refer to Zhang \cite{Z} by regarding the Euler equations as the particular case of the equations of elastodynamics. More recently, Masmoudi, Rousset and Sun \cite{MRS} established the incompressible limit for the free-boundary compressible isentropic Navier--Stokes equations with the slightly well-prepared initial data.

\section{Main results}

\subsection{Statement of the results}
As usual, to avoid the use of local coordinate charts and a partition of unity necessary for arbitrary geometries, for simplicity, we assume that the reference domain is
\begin{equation}
\label{domain}
\Omega=\mathbb{T}^2\times(0,1),
\end{equation}
where $\mathbb{T}^2$ denotes the 2-torus. This permits the use of one global Cartesian coordinate system. The boundary $\Sigma$ of $\Omega$ is then given by
\begin{equation}
\Sigma =\mathbb{T}^2 \times (\{0\}\cup \{1\}),
\end{equation}
and the outward unit normal vector $N$ of $\Sigma$ is
\begin{equation}
N=e_3 \text{ on }\{x_3=1\}, \text{ and }N=-e_3\text{ on }\{x_3=-1\}.
\end{equation}

We assume
\beq\label{inc1}
J_0,q_0,\theta_0\geq c_0>0 \quad\text{in }\Omega
\eeq
and the Taylor sign condition
\beq\label{inc2}
-\nabla  q_0\cdot N>c_0>0\quad\text{on }\Sigma.
\eeq
It is well known that  \eqref{inc2} is needed to get the local well-posedness of the free-boundary Euler equations without surface tension \cite{E1}.
We will work in a high-regularity context (up to $4$ temporal derivatives), which requires one to use the initial data $(\eta_0,q_0,v_0,\theta_0)$  of \eqref{EFv} to construct the initial data  $ (\partial_t^j q(0),\partial_t^j v(0),\partial_t^j \theta(0))$ for $j=1,2,3,4$ (and $\dt^j\eta(0):=\dt^{j-1}v(0)$ for $j=1,2,3$) recursively by
\begin{align}\label{in0def}
\partial_t^j q(0)
 &:= \partial_t^{j-1}
\(-\(1+\frac{R}{c_v}\) q\diva  v+\frac{\mu R}{c_v}\Delta_\a \theta\)(0),
\\\partial_t^j v(0)&
 :=  \partial_t^{j-1}
\( -\frac{R\theta}{q} \naba q\)(0),\label{in0def2}
\\
\partial_t^j  \theta(0)
 &:=  \partial_t^{j-1}\(\frac{R\theta} {c_vq}\( - q \diva v +\mu\Delta_\a \theta\)\) (0).\label{in0def3}
\end{align}
By the iteration,  these initial data can be determined in terms of $\nabla\eta_0$ and $(q_0,v_0,\theta_0)$ (and their spatial derivatives). In order for $(\eta_0,q_0,v_0,\theta_0)$ to be taken as the initial  data for the local well-posedness of  \eqref{EFv} in our energy functional framework below, besides \eqref{inc1} and \eqref{inc2}, these data need to satisfy  the following third order  compatibility conditions:
\beq\label{inc3}
\dt^j q(0)=\dt^j \bar{p},\quad  \dt^j(\nabla_\a \theta\cdot\n)(0)=0\quad\text{on }\Sigma,\ j=0,1,2,3.
\eeq

Let $H^k(\Omega)$, $k\ge 0$ and $H^s(\Sigma )$, $s \in \Rn{}$ be the usual Sobolev spaces with norms denoted by $\norm{\cdot}_m  $ and $ \abs{\cdot}_s$, respectively. $C>0$ denotes generic constants, which depend only on  the physical constants $\mu, R,c_v,\bar{p}$ and $c_0$ in \eqref{inc1} and \eqref{inc2}, and $f\ls g$ denotes $f\leq Cg$.  We use $P$ to denote a generic polynomial function of its arguments, and the polynomial coefficients are generic constants $C$, which will change from line to line. We shall omit the differential elements of the integrals. Denote $\bp=(\p_1,\p_2)$. We define the energy functional
\beq\label{fgfgdef}
\fg (t):=
\sup_{[0,t]}  \E +\int_0^{t} \D ,
\eeq
where
\begin{align}\label{Energy}
\E:=& \sum_{j=0}^4\norm{ \(\dt^j q , \dt^j v\) }_{4-j}^2 +\norm{\eta}_4^2 +\abs{\bp^4\eta\cdot\n}_0^2
 +\ns{\theta}_4+\sum_{j=1}^3\norm{\dt^j\theta}_{5-j}^2 +\norm{\dt^4\theta}_0^2,
\\ \D :=&\norm{\pa_t^4\theta}_{1}^2.
\end{align}
\begin{remark}\label{rm1}
In the definition \eqref{Energy} of $\E$ the temperature $\theta$ does not have the
usual parabolic regularity counting, which results from the coupling with the
Euler equations. Due to the regularizing effect of the heat conduction, $\theta$ enjoys
one order of regularity higher than the velocity $v$ for its temporal
derivatives of orders $1$ to $3$, but we only have $\theta\in H^4(\Omega)$ which is restricted to the limited regularity of the flow map $\eta$.
\end{remark}

The first main result of this paper is the local existence of unique solution to \eqref{EFv}, which is stated as the following theorem.
\begin{theorem}\label{mainthm}
Assume that the initial data $ (\eta_0,q_0,v_0,\theta_0)  $ are given such that $\E(0)<\infty$,  \eqref{inc1},  \eqref{inc2} and the third order  compatibility conditions \eqref{inc3} are satisfied. Then there exist a time $T_0>0$ and a unique solution $(\eta,q,v, \theta)$ to  \eqref{EFv} on the time interval $[0, T_0]$ which satisfies the estimate
\begin{equation}\label{enesti}
\fg (T_0) \leq  M_0 ,
\end{equation}
where $M_0:=P\left(\E (0)\right).$
\end{theorem}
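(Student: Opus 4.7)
The plan is to obtain the solution as the limit of a double family of approximate problems indexed by a tangential smoothing parameter $\kappa>0$ acting on the flow map (yielding a smoothed matrix $\ak$) and a pseudo-parabolic parameter $\eps>0$ added to the temperature equation. For fixed $\kappa$ the $(\kappa,\eps)$-problem will be well-posed, its solutions will enjoy $\eps$-uniform high-norm bounds permitting the limit $\eps\to 0$, and the resulting $\kappa$-solutions will admit $\kappa$-uniform bounds of the form $\fgk(T)\le M_0$ on a time $T_0=T_0(M_0)$, from which $\kappa\to 0$ follows by compactness.

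A first difficulty, before any estimates, is the construction of approximate initial data. Because the tangential smoothing alters the equations at $t=0$, the recursive definitions \eqref{in0def}--\eqref{in0def3} applied to the $\kappa$-problem do not reproduce the original $\dt^j q(0),\dt^j v(0),\dt^j\theta(0)$, and in particular the compatibility conditions \eqref{inc3} fail at order $j\ge 1$ for the smoothed temperature equation. The remedy, announced in the abstract, is to insert a $\kappa$-dependent corrector into the smoothed temperature equation \emph{before} differentiating in time, chosen precisely so that the formulas for $\dt^j\theta(0)$ in the $\kappa$-problem coincide with the given ones while the corrector vanishes as $\kappa\to 0$. One thereby constructs $(\eta_0^\kappa,q_0^\kappa,v_0^\kappa,\theta_0^\kappa)$ satisfying the corresponding $\kappa$-compatibility conditions, with the $\kappa$-analog of $\E(0)$ bounded uniformly in $\kappa$ in terms of $\E(0)$.

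For fixed $\kappa,\eps$ the hyperbolic momentum/pressure block couples to the Fourier block through $\diva v$ and $\Delta_\a\theta$, and the coupling is strong enough that the naive Picard iteration---freezing $v$ in the heat equation and $\theta$ in the momentum equation---is not contractive, because closing it would require regularity on $\Delta_\a\theta$ comparable to that on $v$. Adding the pseudo-parabolic term $-\eps\dak\dt\theta$ to the temperature equation supplies the missing regularity: the $\theta$-equation becomes elliptic in $\dt\theta$ with Neumann data and can be solved by Galerkin methods, while the momentum block reduces to a standard linear wave-type system with coefficients prescribed by the previous iterate. Contraction in a low-norm space then closes the fixed-point argument, and $\eps$-uniform bounds are obtained by the same tangential--temporal differentiation used for the $\kappa$-problem, noting that $-\eps\dak\dt\theta$ produces only favorable contributions when tested against $\dt^j\theta$.

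The main work is then the $\kappa$-uniform estimate $\fgk(T)\le M_0$. One differentiates the equations $j=0,\dots,4$ times in time, tests against $\dt^j v$ and $\dt^j\theta$, and uses the boundary conditions $q=\bar p$ and $\naba\theta\cdot\n=0$ to eliminate dangerous boundary contributions when integrating by parts against $\naba$ and $\diva$; the Taylor sign condition \eqref{inc2} then provides the coercive boundary term controlling $\abs{\bp^4\eta\cdot\n}_0^2$ via a Rayleigh--Taylor argument, exactly as in the isentropic free-boundary compressible Euler case. The heat equation yields the dissipation $\ns{\dt^4\theta}_1$, and elliptic regularity for $\dak$ with Neumann data upgrades the lower temporal derivatives of $\theta$ by one spatial order compared with $v$, accounting for the asymmetric counting in $\E$ (cf.\ Remark \ref{rm1}). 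A strongly convergent subsequence is extracted by Aubin--Lions, the limit is verified to solve \eqref{EFv}, and uniqueness follows from a lower-norm energy estimate for the difference of two solutions combined with \eqref{inc2} and Gronwall. The principal obstacles are exactly (i) designing the corrector so that both the $\kappa$-compatibility conditions and the consistency of the smoothed system with the original system as $\kappa\to 0$ hold simultaneously, and (ii) closing the $\kappa$-uniform high-norm estimate under the strong Euler--Fourier coupling, which forces the asymmetric regularity built into $\E$ and a careful handling of the commutators between $\dt^j$ and the smoothed operators $\diva,\naba,\dak$.
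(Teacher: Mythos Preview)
Your overall architecture---tangential smoothing, pseudo-parabolic regularization of the Fourier part, corrector in the temperature equation, then $\eps\to 0$ and $\kappa\to 0$---matches the paper, but there is a genuine gap in the $\kappa$-uniform tangential estimate. You assert that the boundary term controlling $\abs{\bp^4\eta\cdot\n}_0^2$ is obtained ``exactly as in the isentropic free-boundary compressible Euler case.'' This is precisely where the argument fails for Euler--Fourier. In the isentropic case (Coutand--Hole--Shkoller) the problematic boundary integrals arising from the tangential smoothing are controlled because one can prove a half-order extra regularity of $\eta$, which comes from the vorticity equation $\curl_\a\dt v=0$. With heat conduction the vorticity equation acquires a source (cf.\ \eqref{vvv1}), the half-order gain is unavailable, and the two boundary integrals $\i_{1b}$, $\i_{2b}$ (see \eqref{tmp1}, \eqref{ttttt}) are individually out of control for $\kappa>0$. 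The paper's remedy is to modify the flow-map equation itself to $\dt\eta=v+\psi^\kappa$, with the harmonic extension $\psi^\kappa$ of \eqref{etaaa} designed exactly so that the extra term $\i_3$ it generates cancels $\i_{1b}+\i_{2b}$ up to controllable errors. Your proposal never introduces such a modification, so the $\kappa$-uniform closure of the $\bp^4$-energy would not go through as written. Relatedly, you do not mention Alinhac good unknowns $\mathcal V,\mathcal Q$; these are needed even to reach the point where $\i_{1b},\i_{2b}$ appear, and crucially one must \emph{not} use the temperature equation when estimating $\mathcal Q\,\diva\mathcal V$ (commuting $\bp^4$ with $\Delta_\a$ would lose two derivatives), relying instead on $\dt\theta\in H^4$, a point your discussion of the coupling does not capture.

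A secondary difference: you make the corrector $\kappa$-dependent, whereas the paper decouples the two roles by introducing a separate parameter $\delta$ to first smooth the initial data (and define the corrector $\Xi^\delta$), and only then uses a CHS-type boundary-value adjustment of $\theta_0$ to handle the $\kappa$-compatibility. The paper stresses that because the tangential-smoothing operators $\eta^\kappa,\psi^\kappa$ are nonlocal, building a single $\kappa$-dependent corrector that simultaneously restores the high-order compatibility \emph{and} vanishes as $\kappa\to 0$ in the correct norms is ``highly nontrivial'' at the regularity of Theorem~\ref{mainthm}; the two-parameter route sidesteps this. Your one-parameter corrector is not obviously wrong, but you would need to supply the actual construction, which is the most technical part of Section~\ref{sec3}.
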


With Theorem \ref{mainthm} in hand, we now turn to study the low mach number limit problem of \eqref{EFv}. Denote $\lambda$ by the Mach number, which represents the ratio of the characteristic velocity to the sound speed in the fluid. Considering the change of variables:
\beq
t\rightarrow \lambda t, \ v\rightarrow \lambda v,\ \mu\rightarrow \lambda\mu
 \eeq
 in \eqref{EFv}, one deduces (see \cite{lions})
\begin{equation}\label{mach}
  \begin{cases}
  \partial_t\eta^\lambda =v^\lambda &\text{in } \Omega \\
  \dt  q^\lambda-\frac{q^\lambda}{\theta^\lambda}\dt\theta^\lambda +q^\lambda \diverge_{\a^\lambda}  v^\lambda  = 0&\text{in } \Omega \\
  \frac{q^\lambda}{R\theta^\lambda}\partial_t v^\lambda  +\dfrac{1}{\lambda^2}\nabla_{\a^\lambda } q^\lambda =0  &\text{in } \Omega \\
   \frac{c_vq^\lambda}{R\theta^\lambda}\partial_t\theta^\lambda  + q^\lambda\diverge_{\a^\lambda}  v^\lambda -\mu\Delta_{\a^\lambda } \theta^\lambda=0  &\text{in } \Omega \\
  q^\lambda=\bar p ,\quad \nabla_{\a^\lambda } \theta^\lambda\cdot\n^\lambda=0&\text{on }\Sigma \\
   (\eta^\lambda(0),q^\lambda(0), v^\lambda(0), \theta^\lambda(0))  =(\eta_0^\lambda,q_0^\lambda, v_0^\lambda, \theta_0^\lambda),
   \end{cases}
\end{equation}
where $\a^\lambda:=\a(\eta^\lambda)$ (and $\n^\lambda$, $J^\lambda$).
Formally, if $(\eta_0^\lambda,q_0^\lambda, v_0^\lambda, \theta_0^\lambda)\rightarrow (\eta_0 , \bar p,  v_0 , \theta_0 )$ as $\lambda\rightarrow0$, then as $\lambda\rightarrow0$, \eqref{mach} converges to the following free-boundary inviscid low mach number limit system:
\begin{equation}\label{machlimit}
  \begin{cases}
  \partial_t\eta =v &\text{in } \Omega \\
 \diva v  = \frac{1}{\theta}\dt\theta&\text{in } \Omega \\
  \frac{\bar p }{R\theta}\partial_t v  + \nabla_\a \pi =0  &\text{in } \Omega \\
  \(1+\frac{c_v }{R }\) \frac{ \bar p}{ \theta} \partial_t\theta   -\mu\Delta_\a \theta=0  &\text{in } \Omega \\
  \pi=0 ,\quad \nabla_\a \theta\cdot\n=0&\text{on }\Sigma \\
   (\eta(0),  v(0), \theta(0))  =(\eta_0 ,  v_0 , \theta_0 ).
   \end{cases}
\end{equation}
To rigorously justify this convergence, we need to derive certain uniform-in-$\lambda$ estimates for the solutions $(\eta^\lambda,q^\lambda, v^\lambda, \theta^\lambda)$ to \eqref{mach}, and we will use the $\lambda$-weighted energy method. To this end, we define the energy functional
\beq\label{fgdefl}
\fg^{\lambda}(t):=
\sup_{[0,t]}  \E^\lambda+\int_0^{t} \D^\lambda,
\eeq
where
\begin{align}
\E^\lambda   : =&\norm{\eta}_4^2 +\abs{\bp^4\eta\cdot\n}_0^2+\ns{v}_4+\ns{\dt v}_3+\ns{\dt^2 v}_1+\ns{\lambda\dt^2v}_2+\ns{\lambda \dt^3v}_1+\ns{\lambda^2\dt^4v}_0\nonumber\\  &+\ns{\lambda^{-2}\nabla q}_3+\ns{\lambda^{-2} \dt q}_2+\ns{\lambda^{-1}\dt q}_3+\ns{\lambda^{-1}\dt^2  q}_2+\ns{\dt^3q}_1+\ns{\lambda\dt^4q}_0\nonumber\\&+\ns{\theta}_4+\ns{\dt\theta}_4+\ns{\dt^2\theta}_2+\ns{\dt^3\theta}_0
+ \ns{\lambda\dt^2\theta}_{3}+ \ns{\lambda\dt^3\theta}_{2}+\norm{\lambda\dt^4\theta}_0^2,
\\
\D^\lambda :=&\ns{\dt^2 v}_2+\ns{\lambda^{-2} \dt q}_3+\norm{\pa_t^2\theta}_{3}^2+\norm{\pa_t^3\theta}_{1}^2+\ns{\lambda\dt^4\theta}_1.
\end{align}
\begin{remark}\label{rm2}
In order to guarantee that the Taylor sign condition holds at positive time uniformly in $\lambda$, we need a control of $\ns{ \lambda^{-2} \dt q^\lambda}_3$. Different from the free-boundary compressible isentropic Euler equations \cite{LL,WZZ,Z}, the control of $\ns{\lambda^{-2}\dt q^\lambda}_3$ here is only $L^2$-in-time rather than $L^\infty$, which is due to that the additionally needed control of $ \norm{\pa_t^3\theta^\lambda}_{1}^2$  is only $L^2$-in-time.
\end{remark}

The second main result of this paper for the low mach number limit of  \eqref{mach} towards \eqref{machlimit} is stated as the following theorem.
\begin{theorem}\label{uniapriori}
Assume that the initial data $(\eta_0^\lambda,q_0^\lambda, v_0^\lambda, \theta_0^\lambda)$ are given such that
\beq
\E^\lambda[\eta^\lambda,q^\lambda, v^\lambda, \theta^\lambda] (0)\leq R_0,
\eeq
\beq\label{inc1l}
J_0^\lambda,q_0^\lambda,\theta_0^\lambda\geq c_0>0 \quad\text{in }\Omega
\eeq
and
\beq\label{inc2l}
-\frac{1}{\lambda^2}\nabla  q_0^\lambda\cdot N>c_0>0\quad\text{on }\Sigma.
\eeq
hold uniformly in $\lambda$ and that the third order compatibility conditions
\beq\label{inc3lll}
\dt^j q^\lambda(0)=\dt^j \bar{p},\quad  \dt^j(\nabla_{\a^\lambda} \theta^\lambda\cdot\n^\lambda)(0)=0\quad\text{on }\Sigma,\ j=0,1,2,3.
\eeq
are satisfied. There exist  a $T_0>0$ and an $\lambda_0>0$ such that for every $0<\lambda\le \lambda_0$, the unique solution $(\eta^\lambda,q^\lambda, v^\lambda, \theta^\lambda)$ to \eqref{mach}, constructed in Theorem \ref{mainthm}, exists on $[0,T_0]$ and satisfies the uniform estimate
\begin{equation}\label{enestiu}
\fg^\lambda[\eta^\lambda,q^\lambda, v^\lambda, \theta^\lambda]  (T_0) \leq  P\left(R_0\right).
\end{equation}

Moreover, if one assumes further that
\beq
(\eta_0^\lambda, v_0^\lambda, \theta_0^\lambda)\rightarrow (\eta_0 ,  v_0 , \theta_0 )\text{ in }L^2(\Omega) \text{ as }\lambda\rightarrow 0,
\eeq
then as $\lambda\rightarrow 0$, $
\(\eta^\lambda, \dfrac{1}{\lambda^2}(q^\lambda-\bar p), v^\lambda,\theta^\lambda\)$ converge to the limit $ \(\eta, \pi, v,\theta\)
$, which is the unique solution to \eqref{machlimit} on $[0,T_0]$ that satisfies the estimate
\begin{align}\label{eulerlimiten}
&\sup_{[0,T_0]} \left\{ \norm{\eta}_4^2 +\abs{\bp^4\eta\cdot\n}_0^2+\ns{v}_4+\ns{\dt v}_3+\ns{\dt^2 v}_1  +\ns{\pi}_4+\ns{  \dt \pi}_2\right.
 \nonumber\\&\qquad\quad\left.+\ns{\theta}_4+\ns{\dt\theta}_4+\ns{\dt^2\theta}_2+\ns{\dt^3\theta}_0 \right\}  \nonumber\\&\quad+\int_0^{T_0} \left\{\ns{\dt^2 v}_2+\ns{ \dt \pi}_3+\norm{\pa_t^2\theta}_{3}^2+\norm{\pa_t^3\theta}_{1}^2\right\}\leq  P\left(R_0\right) .
\end{align}
\end{theorem}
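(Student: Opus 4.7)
The plan is to first invoke Theorem~\ref{mainthm} for each fixed $\lambda>0$ to produce a local solution $(\eta^\lambda,q^\lambda,v^\lambda,\theta^\lambda)$ of \eqref{mach}, and then to upgrade this to a $\lambda$-uniform existence time $T_0$ by closing the energy inequality \eqref{enestiu} through a continuity/bootstrap argument on $\fg^\lambda$. Under the a priori hypothesis $\fg^\lambda(t)\le 2P(R_0)$ on some maximal $[0,t^\ast]$, the target is a closed bound $\fg^\lambda(t)\le P(R_0)+T_0\,P(\fg^\lambda(t))$, which for $T_0$ and $\lambda_0$ sufficiently small forces $t^\ast\ge T_0$ independently of $\lambda$. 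Within this window, the Taylor sign condition \eqref{inc2l} is propagated by combining the $L^2_t$ control of $\ns{\lambda^{-2}\dt q^\lambda}_3$ built into $\D^\lambda$ with a trace estimate, which is exactly the restriction pointed out in Remark~\ref{rm2}.

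The $\lambda$-weights appearing in $\E^\lambda$ are dictated by the structure of \eqref{mach}: applying $\dt^j$ to the momentum equation contributes a singular term $\lambda^{-2}\nabla_{\a^\lambda}\dt^j q^\lambda$, which is compensated by attaching the factor $\lambda^j$ (for $j\ge 2$) to $\dt^j v^\lambda$ in the test function, while the pressure equation in the form \eqref{q10}, now with the additional source $(\mu R/c_v)\Delta_{\a^\lambda}\theta^\lambda$, forces $\dt^j q^\lambda$ to carry weight $\lambda^{-2+j}$; this is precisely the second line of $\E^\lambda$. Spatial regularity for $\eta^\lambda$ and the boundary quantity $\abs{\bp^4\eta^\lambda\cdot\n^\lambda}_0$ are then recovered by a div-curl decomposition together with the elliptic estimate for $q^\lambda$ driven by the Euler equation and the Taylor sign, following the isentropic template of \cite{LL,WZZ,Z}. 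The Fourier side is handled by testing the temperature equation against $\dt^j\theta^\lambda$ with the matching $\lambda$-weight; the Neumann condition $\nabla_{\a^\lambda}\theta^\lambda\cdot\n^\lambda=0$ kills the boundary contribution and produces the parabolic dissipation encoded in $\D^\lambda$, while the coefficient $q^\lambda/R\theta^\lambda\sim \bar p/R\theta$ stays in a compact set under the bootstrap assumption.

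The main obstacle, and what forces several of the less standard terms in $\E^\lambda$ and $\D^\lambda$, is closing the top-order estimate at four time derivatives. When $\dt^4$ is applied to the momentum equation and the result is tested against $\lambda^2\dt^4 v^\lambda$, transferring a time derivative onto $q^\lambda$ via \eqref{q10} generates, first, commutator terms with $\nabla_{\a^\lambda}$ that sit at the borderline regularity dictated by $\norm{\eta^\lambda}_4$, and second, a cross term proportional to $\mu\lambda^2\Delta_{\a^\lambda}\dt^3\theta^\lambda$ that does not close on its own. This is precisely why the dissipation $\ns{\lambda\dt^4\theta}_1$ together with the auxiliary control $\ns{\lambda\dt^3\theta}_2$ is inserted into $\D^\lambda$ and $\E^\lambda$: they provide just enough Fourier smoothing to absorb the Euler-side commutators, and the $\lambda$-weight keeps the absorbing constant uniform in $\lambda$. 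A parallel but milder coupling at three time derivatives is handled by $\ns{\pa_t^3\theta}_1$ in $\D^\lambda$, and all lower-order pieces close by standard $L^\infty_tH^s$ energy identities, trace inequalities, Hodge/elliptic estimates, and Gronwall.

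For the low Mach number limit, the uniform bound \eqref{enestiu} gives $\norm{q^\lambda-\bar p}_4\ls \lambda^2$ (using $q^\lambda=\bar p$ on $\Sigma$ and the control of $\lambda^{-2}\nabla q^\lambda$), together with weak-$\ast$ compactness in the $\lambda$-independent norms present in $\E^\lambda$ for $(\eta^\lambda,v^\lambda,\theta^\lambda)$ and for $\pi^\lambda:=(q^\lambda-\bar p)/\lambda^2$. The Aubin--Lions lemma applied with the time-derivative bounds yields strong compactness in lower-order norms, which is enough to pass to the limit in every nonlinear term of \eqref{mach}. The momentum equation converges to $\frac{\bar p}{R\theta}\dt v+\nabla_\a\pi=0$; combining the limits of the pressure equation and the temperature equation produces the divergence constraint $\diva v=(1/\theta)\dt\theta$, while the temperature equation itself converges to the Fourier equation of \eqref{machlimit}; the boundary conditions $\pi=0$ and $\nabla_\a\theta\cdot\n=0$ are inherited from the traces. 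A standard energy estimate for the limit system---now free of the singular parameter and having parabolic--hyperbolic character---yields uniqueness, upgrades subsequential convergence to convergence of the whole family as $\lambda\to 0$, and combined with weak lower semicontinuity delivers \eqref{eulerlimiten}.
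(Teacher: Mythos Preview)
Your overall architecture---local existence from Theorem~\ref{mainthm}, bootstrap on $\fg^\lambda$, $\lambda$-weighted energy evolution plus elliptic recovery, and Aubin--Lions compactness for the limit---matches the paper. Two technical points, however, are either missing or handled in a way that does not close.

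\textbf{Good unknowns for the $\bp^4$ estimate.} You write that ``spatial regularity for $\eta^\lambda$ and the boundary quantity $\abs{\bp^4\eta\cdot\n}_0$ are recovered by a div-curl decomposition together with the elliptic estimate for $q$.'' This skips the essential step: applying $\bp^4$ to the momentum equation produces the commutator $[\bp^4,\a^\lambda]\nabla q^\lambda$, whose top piece $\bp^4\a^\lambda\cdot\nabla q^\lambda$ needs $\eta^\lambda\in H^5$, which you do not have. The paper avoids this by working with the Alinhac good unknowns $\mathcal V=\bp^4 v-\bp^4\eta\cdot\nabla_{\a}v$, $\mathcal Q=\bp^4 q-\bp^4\eta\cdot\nabla_{\a}q$ (see \eqref{eqV22}--\eqref{t1estl}); the boundary cancellation \eqref{intro4} then delivers the signed term $\int_\Sigma(-\lambda^{-2}\nabla q\cdot N)J\abs{\bp^4\eta\cdot\n}^2$, which is where the Taylor sign condition actually enters. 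The isentropic references you invoke do use this device, but your sketch does not, and div-curl alone cannot close here.

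\textbf{The top-order temporal coupling.} You propose to close the $\dt^4$ level by substituting \eqref{q10} into the momentum estimate and testing the temperature equation against $\dt^4\theta$. Substituting the $\dt^4$ version of \eqref{q10} leaves the cross term $\int\lambda^2 q^{-1}\dt^4 q\,\dt^4(\Delta_{\a}\theta)$; after one spatial integration by parts (the boundary vanishes since $\dt^4 q=0$ on $\Sigma$) the principal part is $\int\lambda^2\nabla_{\a}\dt^4 q\cdot\nabla_{\a}\dt^4\theta$, and $\norm{\lambda\nabla\dt^4 q}_0$ is not available in $\E^\lambda$---your proposed absorbers $\ns{\lambda\dt^4\theta}_1$ and $\ns{\lambda\dt^3\theta}_2$ do not help with this factor. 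The paper instead keeps the continuity equation in its original form $\dt q-\tfrac{q}{\theta}\dt\theta+q\diva v=0$ and tests the $\dt^4$ temperature equation not with $\dt^4\theta$ but with $\lambda^2\tfrac{J}{q}\,\dt^3\!\bigl(\tfrac{q}{\theta}\dt\theta\bigr)$; the three $\diva\dt^4 v$ contributions from momentum, continuity, and temperature then cancel exactly, producing the clean energy $\tfrac{d}{dt}\int\tfrac{J}{q}\lambda^2\bigl|\dt^4 q-\dt^3(\tfrac{q}{\theta}\dt\theta)\bigr|^2$ with no residual top-order cross term (see \eqref{kesesl1200}--\eqref{tv1bl}, which parallel Proposition~\ref{prop31}). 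The same trick at the $\dt^3$ level yields $\int_0^T\ns{\dt^3\theta}_1$. This specific test function is the mechanism that makes the Euler--Fourier coupling close uniformly in $\lambda$; your route via \eqref{q10} would require an additional argument you have not supplied.
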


\begin{remark}\label{rm3}
It should be pointed out that  Theorem \ref{uniapriori} does not imply immediately the local well-posedness of \eqref{machlimit} in the energy functional framework of \eqref{eulerlimiten}. Given the initial data $(\eta_0 ,  v_0 , \theta_0 )$  of \eqref{machlimit}, in general, it seems difficult to construct the approximate initial data $(\eta_0^\lambda,q_0^\lambda, v_0^\lambda, \theta_0^\lambda)$ of \eqref{mach} due to the issue of the high order compatibility conditions. Nevertheless, the strategy of proving Theorem \ref{mainthm} can be applied directly to prove the local well-posedness of \eqref{machlimit}, for which only the a priori estimates in a different energy functional was derived in \cite{LZh} in the case of the Dirichlet boundary condition for the temperature by using the geometric approach introduced in \cite{CL}.
\end{remark}
\subsection{Outline of the proof}
Building on the methods already developed for the local well-posedness theory of free boundary problems for the Euler equations and related inviscid fluid equations, the main novelty of this paper lies in the way of overcoming the new difficulties caused by the strong coupling effect between Euler part and Fourier part, which is induced by the presence of heat conduction in the temperature equation and  the Neumann boundary condition for the temperature.

To prove Theorem \ref{mainthm}, let us start with the derivation of the estimate \eqref{enesti}. Making use of various elliptic estimates available in \eqref{EFv} ($i.e.$, the acoustic wave equation of the pressure $q$, the heat equation of the temperature $\theta$ and the Hodge-type elliptic estimates of the velocity $v$), we reduce the derivation to get the energy evolution estimates of $(\dt^4q,\dt^4v,\dt^4\theta)$,  $\bar\partial^4 v$ and the boundary regularity of the flow map $\eta$. To estimate for $\dt^4v$, by the momentum equations one deduces
 \begin{align}\label{intro1}
 &\hal\dtt\int_\Omega  \frac{Jq}{R\theta}\abs{\dt^4v}^2= \int_\Omega J     \dt^4 q \diva\dt^4v+ {\sum}_\mathcal{R}
\nonumber\\&\quad=  \int_\Omega J   \(\dt^4 q- \dt^3 \(\frac{  q}{ \theta}\dt\theta\) \) \diva\dt^4v+\int_\Omega J   \dt^3 \(\frac{  q}{ \theta}\dt\theta\)\diva\dt^4v+ {\sum}_\mathcal{R},
\end{align}
where ${\sum}_{\mathcal{R}}$ denotes terms whose time integration can be bounded by $M_0+   t P\(\fg(t) \)$. The first term in the right hand side of \eqref{intro1} can be estimated by using the continuity equation as for the isentropic case, while the estimate of the second term requires one to use  the temperature equation; this forces one to estimate $(\dt^4q,\dt^4v)$ and $\dt^4\theta$ simultaneously by using essentially the coupling structure between the Euler part and the Fourier part. It is worth pointing out here that such a strong coupling effect is indeed induced by the Neumann boundary condition for the temperature, and for the case of the Dirichlet boundary condition one can integrate by parts over $\Omega$ to have
\beq \label{intro2}
  \int_\Omega J  \dt^3 \(\frac{  q}{ \theta}\dt\theta\) \diva\dt^4v= - \int_\Omega J    \nabla_\a\dt^3 \(\frac{  q}{ \theta}\dt\theta\)  \cdot\dt^4v = {\sum}_\mathcal{R},
\eeq
where  $\dt^4\theta\in H^1(\Omega)$ has been used crucially.  To estimate for $\bp^4v$  and the boundary regularity of $\eta$, the typical difficulty is the loss of one spatial derivative when
estimating the commutators between $\bp^4$ and $\pa_i^\a$, and it is natural to
introduce the good unknowns
\begin{equation}
\mathcal{V}=\bp^4 v-\bp^4 \eta \cdot\nabla_\a v,\quad \mathcal{Q}=\bp^4 q -\bp^4 \eta \cdot\nabla_\a q.
\end{equation}
It was first observed by Alinhac \cite{A} that the highest order term of $\eta$ will be cancelled when considering the equations satisfied by these good unknowns, which allows one to deduce
 \begin{align} \label{intro3}
\hal\dtt\int_\Omega  \frac{Jq}{R\theta}\abs{\mathcal V}^2& =-\int_\Sigma J\mathcal Q   \mathcal V \cdot\n +\int_\Omega J \mathcal Q \diva   \mathcal V +{\sum}_\mathcal{R}.
\end{align}
There is another crucial cancellation on the boundary: since $\dt\eta=v $ and $\dt \n=-\nabla_\a  v\cdot\n$,
 \begin{align}\label{intro4}
-\int_\Sigma J\mathcal Q   \mathcal V \cdot\n&=\int_\Sigma  \nabla q\cdot N  J\bp^4 \eta \cdot\n \(\bp^4 v  \cdot\n-\(\bp^4 \eta \cdot\nabla_\a\) v\cdot\n\)\nonumber
\\&=\hal\dtt\int_\Sigma  \nabla q\cdot N  J\abs{\bp^4 \eta \cdot\n }^2-\hal \int_\Sigma \dt( \nabla q\cdot N  J)\abs{\bp^4 \eta \cdot\n }^2,
\end{align}
where the Taylor sign condition comes into play.
Note that one could not use the temperature equation as for \eqref{intro1} when estimating the second term in the right hand side of \eqref{intro3} as there would be a loss of two spatial derivatives when estimating the commutator $\mu[\bp^4,\Delta_\a]\theta$, where the standard notion $[T,f]g=T(fg)-Tf g$ is used. The key point here is that $\dt\theta\in H^4(\Omega)$ in our energy framework, so
\begin{align}\label{intro5}
  -\int_\Omega J  \mathcal Q \diva \mathcal V   &  = \int_\Omega J \mathcal Q \(\frac{1}{q} \dt \mathcal{Q}- \frac{ 1}{\theta}\dt \bp^4  \theta \)+{\sum}_\mathcal{R}= \hal\dtt\int_\Omega  \frac{J}{q}\abs{\mathcal Q}^2   + {\sum}_\mathcal{R},
\end{align}
where only the continuity equation has been used.
These allow one to deduce the  estimates as recorded in \eqref{enesti}.

As well known for free boundary problems, especially for inviscid fluids, it is often not the case that the a priori estimates would yield directly the local well-posedness. There are two crucial structures of \eqref{EFv} used essentially in the derivation of \eqref{enesti}: the nonlinear geometric symmetry ($i.e.$, \eqref{intro4}) and the strong coupling between the Euler part and the Fourier part (cf. the estimate of \eqref{intro1}), yet they would be not available for the approximate solutions constructed by the usual Picard's linear iteration. It then requires one to devise a suitable approximate problem, which is asymptotically consistent with the a priori estimates, and construct  solutions to the approximate problem. To overcome the loss of derivatives due to the destruction of the nonlinear symmetry structure in the linearization, Coutand and Shkoller \cite{CS1} introduced the tangentially smoothing technique to consider the nonlinear $\kappa$-approximate problem by approximating $\a=\a(\eta)$ with $\ak=\a( \eta^\kappa)$, where $\eta^\kappa$ satisfies $\eta^\kappa=\Lambda_\kappa^2 \eta$ on $\Sigma$ for $\Lambda_\kappa$ the standard horizontal smoothing operator. Such approximation retains the nonlinear geometric structure, however, it leads to the additional boundary integral:
 \begin{align}\label{intro6}
\int_\Sigma \nabla q\cdot N \jk\(\bp^4\Lambda_\kappa \eta  \cdot\nk    (\bp^4 \Lambda_\kappa \eta\cdot \nabla_\ak ) \Lambda_\kappa^2 v\cdot \nk - \bp^4\Lambda_\kappa^2 \eta  \cdot\nk    (\bp^4 \Lambda_\kappa^2 \eta \cdot\nabla_\ak ) v\cdot\nk\).
\end{align}
Recall that \eqref{intro6} is controllable for the Euler equations since one can derive a half order higher regularity of $\eta$ therein, which is essentially due to the vorticity equation $\curl_\a\dt v=0$, see \cite{CS1,CHS}.
However, such higher regularity of $\eta$ is not available for the Euler--Fourier system and other complex fluid dynamics such as magnetohydrodynamics \cite{GW} and elastodynamics \cite{Z}. To get around this difficulty, the authors \cite{GW} introduced the modified flow map:
\beq
\partial_t\eta =v +\psi^\kappa,
\eeq
where $\psi^\kappa\rightarrow0$ as $\kappa\rightarrow0$ is the modification term defined by \eqref{etaaa}. With such $\psi^\kappa$,  the troublesome boundary integral \eqref{intro6} can be cancelled, up to some controllable terms.
Note that one also needs to construct the approximate initial data for the nonlinear $\kappa$-approximate problems, which seems highly nontrivial due to the issue of the higher order compatibility conditions and the intrinsic hyperbolic-parabolic coupling character of \eqref{EFv} (cf. Remark \ref{rm1}). Our way of overcoming this difficulty is to introduce the so-called corrector $\Xi^\delta$ beforehand in the temperature equation so that the approximate initial data satisfying the corresponding higher order compatibility conditions can be constructed for our nonlinear $(\kappa,\delta)$-approximate problem \eqref{approximate}.

In order to construct the solutions to \eqref{approximate} for each fixed $\kappa>0$ (and fixed $\delta>0$), the main step is to solve the linearized $(\kappa,\delta)$-Euler--Fourier system \eqref{lkproblem} and the obstacle remained now is  the strong coupling between the Euler part and the Fourier part. When we decouple these two parts, the time regularity of $\divakt v$ is not sufficient for solving the Fourier part of \eqref{lkproblem}, while the time regularity of $\dt\theta$ is not sufficient for solving the Euler part. Note that one can replace $\dt\theta$ in the Euler part by using the temperature equation (cf. \eqref{q10}), yet the time regularity of $\Delta_\akt  \theta$ is still not sufficient. Nevertheless, this motivates us to overcome such strong coupling via regularizing the temperature equation by the following $\eps$-pseudo-parabolic equation \cite{Ti,ST}:
\beq \label{intro8}
 \frac{ c_v \tq}{R\tth}\partial_t\theta  + \tq\divakt   v -\mu\dakt   \theta-\eps  \dt\(\Delta_\akt  \theta\) =\Xi^\delta.
\eeq
For each fixed $\eps>0$, the improved time regularity of $ \Delta_\akt  \theta $  provided by \eqref{intro8}  is sufficient for solving the Euler part, and, on the other hand, now the time regularity of $\divakt v$ is sufficient for solving \eqref{intro8}. We also add accordingly the term $\eps  \frac{R}{c_v}\dt\(\Delta_\akt  \theta\)$ to the continuity equation so as to keep the crucial structures used in deriving the energy estimates of \eqref{lkproblem} essentially unchanged. These lead to that we further regularize \eqref{lkproblem} by the linear $(\kappa\eps,\delta)$-approximate system \eqref{lkeproblem}. For each fixed $\eps>0$, we can solve \eqref{lkeproblem} by a somewhat standard fixed-point argument. Next we derive the $\eps$-independent estimates of solutions to \eqref{lkeproblem}, in the spirit of the way as that for \eqref{approximate} with additional care for the higher order term $\eps  \dt\(\Delta_\akt  \theta\)$, so as to pass the limit as $\eps\rightarrow0$ in \eqref{lkeproblem} to show the existence of solutions to  \eqref{lkproblem}. Then by the linearization iteration and a contraction argument, we prove the local existence of unique solution to the nonlinear $(\kappa,\delta)$-approximate problem \eqref{approximate}. Thus, this together with the uniform estimates with respect to $(\kappa,\delta)$ of the solutions, similarly as that of \eqref{enesti}, allows one to pass the limit as $(\kappa,\delta)\rightarrow0$  in \eqref{approximate} to prove the local well-posedness of \eqref{EFv} as recorded in Theorem \ref{mainthm}.

Now we turn to the proof of Theorem \ref{uniapriori}, and the key step is to derive the $\lambda$-independent estimate \eqref{enestiu} on $[0,T_0]$, a time interval small but independent of $\lambda$.  The   main difficulty of the analysis lies in the singular $ \lambda^{-2}$-factor in the rescaled pressure term. Our primary goal is to derive a $\lambda$-independent estimate of  $\ns{v^\lambda}_4+\ns{\dt v^\lambda}_3+\ns{\lambda^{-2}\nabla q^\lambda}_3$ for the well-prepared initial data  to get a strong convergence. However, different from the fixed domain case, to guarantee that the Taylor sign condition we also need to control $\ns{ \lambda^{-2} \dt q^\lambda}_3$. Such a control is derived by using the rescaled acoustic wave equation:
 \beq
 \dt^2 q^\lambda-\dfrac{1}{\lambda^2}\diverge_{\a^\lambda} \( R\theta^\lambda \nabla_{\a^\lambda} q^\lambda\)= \partial_t \(\frac{q^\lambda}{\theta^\lambda}\partial_t \theta^\lambda\)+\nabla_{\a^\lambda} q^\lambda\cdot\dt v^\lambda-\[\dt,q^\lambda\diverge_{\a^\lambda}\]v^\lambda.
\eeq
Note that, similarly as the isentropic case, the needed control of $\ns{   \dt^3 q^\lambda}_1$ is obtained by using the momentum equations provided one starts with the weighted energy evolution estimate of $\norm{( \lambda\dt^4 q,\lambda^2\dt^4v, \lambda\dt^4 \theta)}_{0}^2 $; however, the additionally needed control of $\ns{   \dt^3 \theta^\lambda}_1$ for our case with heat conduction also requires us to derive the weighted energy evolution estimate of $\norm{( \dt^3 q^\lambda,\lambda\dt^3 v^\lambda,\dt^3 \theta^\lambda) }_{0}^2$ with which the dissipation estimate of $\ns{   \dt^3 \theta^\lambda}_1$ comes along. Then the rest of the analysis is similar to that of the derivation of the estimate \eqref{enesti} for the unscaled problem \eqref{EFv}, with additional care for the various $\lambda$-weights as illustrated in the definitions of $\E^\lambda$ and $\D^\lambda $. With the uniform estimate \eqref{enestiu} in hand, then it is standard to justify the low Mach number limit as recorded in Theorem \ref{uniapriori} by the strong compactness argument.


\section{Nonlinear $(\kappa,\delta)$-approximate problem}\label{sec3}

In this section, we will introduce our approximate problems for \eqref{EFv}, that is, for $0<\kd<1$, we will consider the following sequence of nonlinear $(\kappa,\delta)$-approximate problems:
\begin{equation}\label{approximate}
\begin{cases}
\partial_t\eta =v +\psi^\kappa  &\text{in } \Omega \\
\dt q-\frac{q}{\theta}\partial_t\theta+ q\divak  v=0&\text{in } \Omega \\
\frac{q}{R\theta}\partial_t v  +\nak q  =0  &\text{in } \Omega \\
 \frac{c_v q}{R\theta}\partial_t\theta  +q\divak v -\mu\Delta_\ak \theta= \Xi^\delta  &\text{in } \Omega \\
q=\bar{p},\quad \nabla_\ak \theta\cdot\nk=0&\text{on }\Sigma \\
  \(\eta(0),q(0), v(0), \theta(0)\)   =\(\eta_{0}^{\delta},q_{0}^{\delta}, v_{0}^{\delta}, \theta_{0}^{\kappa,\delta}\).
\end{cases}
\end{equation}
Here $\ak=\ak(\eta):=\a(\eta^\kappa)$ (and $\nk$, $\jk$) with $\eta^\kappa=\mathcal{S}_\kappa(\eta)$  defined as the solution to
\begin{equation}
\label{etadef}
\begin{cases}
-\Delta \eta^{\kappa}=-\Delta \Pi_\kappa\( \eta\) &\text{in }\Omega\\
\eta^{\kappa}=\Lambda_{\kappa}^2\eta &\text{on }\Sigma,
\end{cases}
\end{equation}
where $\Pi_\kappa$ is the standard smoothing operator in $\Omega$, and the modification term $\psi^{\kappa}=\psi^{\kappa}(\eta,v)$ is the solution to
\begin{equation}
\label{etaaa}
\begin{cases}
-\Delta \psi^{\kappa}=0&\text{in } \Omega
\\  \psi^{\kappa}= \Delta_{*}^{-1} \mathbb{P} \left(\Delta_{*}\eta_{i}\a^\kappa_{i\alpha}\partial_{\alpha}{\Lambda_{\kappa}^2 v}-\Delta_{*}{\Lambda_{\kappa}^2\eta}_{i}\a^\kappa_{i\alpha}\partial_{\alpha} v\right) &\text{on }\Sigma,
\end{cases}
\end{equation}
where $\mathbb{P} f:=f- \int_{\mathbb{T}^2} f$ and $\Delta_{*}^{-1}$ is the standard inverse of $\Delta_\ast:=\p_1^2+\p_2^2$ on $\mathbb{T}^2$. Hereafter, Einstein's summation convention is used in the way that repeated Latin indices $i,j,$ etc., are summed from 1 to 3, and repeated Greek indices $\alpha,\beta$, etc., are summed from 1 to 2. Such $\psi^{\kappa}$ was first introduced by the authors in \cite{GW}, which is crucial to eliminate two troublesome boundary integrals arising in the tangential energy estimates  of the solutions to \eqref{approximate} below ($i.e.$, $\i_{1b}$ in \eqref{tmp1} and $\i_{2b}$ in \eqref{ttttt}); the sum of these two terms vanishes as $\kappa\rightarrow 0$, but each of them is out of control when $\kappa>0$. It should be emphasized that the term $\Xi^\delta$, the so-called corrector, is crucially introduced for one to construct the approximate initial data satisfying the corresponding third order compatibility conditions. Note that if the initial data $(\eta_0, q_0,v_0, \theta_0) $ of \eqref{EFv} is sufficiently regular, then one can employ a boundary value adjustment argument as in Coutand, Hole and Shkoller \cite{CHS} for the isentropic case to construct the approximate initial data satisfying the corresponding third order compatibility conditions for \eqref{approximate} with $\Xi^\delta=0$ (cf. the construction of $\theta_{0}^{\kappa,\delta}$ below). However, such construction would be highly nontrivial for the initial data with the regularities merely as given in Theorem \ref{mainthm}.  Due to that the definitions of $\eta^\kappa$ and $\psi^\kappa$ are non-local, it will be more convenient to first introduce the corrector $\Xi^\delta$ and approximate the initial data $(\eta_0, q_0,v_0, \theta_0) $ of \eqref{EFv} given in Theorem \ref{mainthm} to produce the smooth initial data $\(\eta_{0}^{\delta},q_{0}^{\delta}, v_{0}^{\delta}, \theta_{0}^{\delta}\)$ satisfying the corresponding third order compatibility conditions of \eqref{approximate} with $\kappa=0$. Since $\(\eta_{0}^{\delta},q_{0}^{\delta}, v_{0}^{\delta}, \theta_{0}^{\delta}\)$ are smooth, we then employ a boundary value adjustment argument as in \cite{CHS} to  approximate $\theta_{0}^{\delta}$ by $ \theta_{0}^{\kappa,\delta} $ so that $\(\eta_{0}^{\delta},q_{0}^{\delta}, v_{0}^{\delta}, \theta_{0}^{\kappa,\delta}\)$ satisfy the corresponding third order compatibility conditions of \eqref{approximate}.

It should be pointed out that the introduction of $\Xi^\delta$ and the construction of $\(\eta_{0}^{\delta},q_{0}^{\delta}, v_{0}^{\delta}, \theta_{0}^{\delta}\)$ are intertwined and highly technical, which are now elaborated as follows. Define $\eta_{0}^\delta=\mathcal{S}_\delta(\eta_0)$, ${\hat q}_0^\delta $ as the solution to
\begin{equation}
  \label{moq}
  \begin{cases}
 -\Delta {\hat q}_0^\delta =-\Delta \Pi_\delta  \( q_0\) &\text{in } \Omega
    \\
   {\hat q}_0^\delta=\bar{p} & \text{on } \Sigma,
  \end{cases}
\end{equation}
 $ \hat{v}_0^\delta=\Pi_\delta (v_0)$ and $\theta_{0}^\delta$ as the solution to
\begin{equation}
  \label{mo2}
  \begin{cases}
    \theta_{0}^\delta-\Delta_{\a^\delta_{0}} \theta^\delta_{0} =\Pi_\delta \(\theta_0-\Delta_{\a_{0}}\theta_0\) &\text{in } \Omega
    \\
\nabla_{\a_{0}^\delta}\theta_{0}^\delta\cdot\n_{0}^\delta=0 & \text{on } \Sigma,
  \end{cases}
\end{equation}
where $\a^\delta_{0}:=\a( \eta_0^\delta)$ (and $ \n_{0}^\delta$). Note that $( \eta_0^\delta, {\hat q}_0^\delta,  \hat{v}_0^\delta, \theta_0^\delta)\in C^\infty(\Omega) $ and satisfy the zero-th order compatibility conditions for \eqref{EFv} ($i.e.$, \eqref{inc3} for $j=0$), but it may not satisfy any higher order compatibility conditions ($i.e.$, \eqref{inc3} for $j\ge 1$). To get around this, our idea is to adjust the boundary value of $(  {\hat q}_0^\delta,{\hat v}_0^\delta)$ on $\Sigma$ and introduce the corrector $ \Xi^\delta$ simultaneously so that the corresponding third order compatibility conditions hold.

In order to present the ideas more clearly and compactly, we first set up some notations. Denote $U:=(q,v)^T$. Assume that
\begin{equation}\label{aa11}
\dt^j U= \dt^{j-1}
\begin{pmatrix}
 -q\diva  v+\frac{q}{\theta}\partial_t\theta   \\
-\frac{R\theta}{q} \naba q
\end{pmatrix},\ j\ge 1,
\end{equation}
where  $\a=\a(\eta)$ and $\dt^{j}\eta=\dt^{j-1}v$ for $j\ge 1$ have been also assumed. One finds recursively that $\dt^j U,\ j\ge1$, depends on $ \nabla^{\le j} U$ and $\dt^{\le j}\theta$, where $\nabla^{\le j}$ denotes a collection of $\p^\al $ for $\al\in \mathbb{N}^3$ with $|\al|\le j$  and $\dt^{\le j}\theta$ denotes a collection of $\dt^\ell$ with $\ell\le j$, etc.; since we will not adjust $\eta_0^\delta$ later, we shall not write out the dependence of $\dt^j U$ on $\eta$, etc.. Denote
\beq\label{aaU6}
{\bf U}^j= {\bf U}^j(\nabla^{\le j} U,\dt^{\le j}\theta):=\dt^j U ,\ j\ge 0.
\eeq
It holds that
\beq\label{vwf}
  \begin{pmatrix}
{\bf U}^j_q \\
{\bf U}^j_v\cdot n
\end{pmatrix}  = (|\n|\mathbb{E}(q,\theta))^j \begin{pmatrix}
\p_3^j q \\
\p_3^jv\cdot n
\end{pmatrix}+\begin{pmatrix}\frac{q}{\theta}\dt^j\theta\\0
\end{pmatrix} +  \mathbb{F}^j(\nabla^{\lnsim j} U,\dt^{\le j-1}\theta)
,\ j\ge 1,
\eeq
where ${\bf U}^j_q$ and ${\bf U}^j_v$ denotes the first and last three components of ${\bf U}^j$, respectively, $n=n(\eta):=\n/\abs{\n}$,
\beq\label{Eqdef}
\mathbb{E} (q,\theta):=   \begin{pmatrix}
    0 & -q  \\ \frac{R\theta }{q}&0
  \end{pmatrix}
  \eeq and  $\nabla^{\lnsim j}$ denotes a collection of $\p^\al $ for $\al\in \mathbb{N}^3$ with $|\al|\le j$  and $\al_3<j$.
For simplicity, we have not written out the explicit form of $\mathbb{F}^j$ (and $\mathbb{G}^j, \mathbb{H}^j$ below) but  only state its dependence, and the key feature is that it does not depend on $\p_3^j U$ and $\dt^j\theta$. Denote also
 \beq
\mathfrak{Q}^j=\mathfrak{Q}^j (\nabla^{\le j} U,\dt^{\le j}\theta):=  \dt^j(\n\cdot \nabla_{\a }  \theta)  ,\ j\ge 0.
\eeq
Then the third compatibility conditions of \eqref{EFv} for the initial data $(\eta_0, q_0,v_0, \theta_0) $ read as that $\mathfrak{Q}^j(0)=0$ and ${\bf U}^j_q(0)=0$ on $\Sigma$, $j=0,1,2,3$.

For $j\ge 1$, by \eqref{partialF}, we have
\begin{align}\label{appd1}
  \dt^j\(\a_{i3} \a_{ik} \)\pa_k\theta &  =-\dt^{j-1}\(\a_{i\ell} \pa_\ell v _{m}\a _{m3}\a _{ik}
 +\a_{i3} \a _{i\ell}\pa_\ell v _{m}\a _{mk}\)\pa_k\theta \nonumber
 \\& = -2|\n|^2
\pa_3 \dt^{j-1} v\cdot n  n\cdot\nabla_\a\theta-|\n|^2\pa_3 \dt^{j-1}  v\cdot \tau^\beta \nabla_\a \theta\cdot \tau^\beta\nonumber
 \\&\quad-\[\dt^{j-1},\a_{i3} \a _{m3}\a _{ik}
 +\a_{i3} \a _{i3} \a _{mk}\]\pa_3 v _{m}\pa_k\theta
 \nonumber
 \\&\quad-\dt^{j-1}\(\(\a _{m3}\a _{ik}+ \a_{i3}  \a _{mk}\)\a_{i\beta} \pa_\beta v _{m}\)\pa_k\theta,
\end{align}
where we have taken the orthogonal base:
\beq\label{base}
\left\{\tau^1 =\frac{\p_1\eta}{|\p_1\eta|},\tau^2 =\frac{\hat \tau}{|\hat \tau|}, n  \right\}\text{ with }\hat \tau=\p_2\eta-\frac{\p_1\eta\cdot\p_2\eta}{|\p_1\eta|^2}\p_1\eta.
\eeq
Note that for $j\ge 1$, by \eqref{vwf},
\begin{align}\label{appd03}
 & \pa_3 \dt^{j-1} v\cdot n
 =\pa_3\({\bf U}^{j-1}_v\cdot n\)-{\bf U}^{j-1}_v\cdot \pa_3  n\nonumber
 \\&\quad= \((|\n|\mathbb{E} (q,\theta))^{j-1} \begin{pmatrix}
\p_3^{j} q \\
\p_3^{j}v\cdot n
\end{pmatrix}\)_2+\((|\n|\mathbb{E} (q,\theta))^{j-1} \begin{pmatrix}
0 \\
\p_3^{j-1}v\cdot \p_3 n
\end{pmatrix}\)_2\nonumber
 \\&\qquad+\(\p_3\( (|\n|\mathbb{E}(q,\theta))^{j-1} \)\begin{pmatrix}
\p_3^{j-1} q \\
\p_3^{j-1}v\cdot n
\end{pmatrix}\)_2+ \p_3 \(\mathbb{F}_2^{j-1}(\nabla^{\lnsim j-1} U,\dt^{\le j-1}\theta) \)-{\bf U}^{j-1}_v\cdot \pa_3  n,
\end{align}
and for $j\ge 2$, by \eqref{aa11},
\begin{align}\label{appd3}
 \pa_3 \dt^{j-1}  v\cdot \tau^\beta&=  - \pa_3 \dt^{j-2} \(\frac{R\theta}{q}\nabla_\a q\)\cdot \tau^\beta\nonumber
 \\&=  -   \frac{R\theta}{q}\nabla_\a \pa_3 \dt^{j-2} q \cdot \tau^\beta  - \[\pa_3  \dt^{j-2} ,\frac{R\theta}{q}\nabla_\a\] q \cdot \tau^\beta.
\end{align}
Then by \eqref{appd1}, \eqref{appd03}, \eqref{appd3} and \eqref{aa11}, we find that
 \begin{align}\label{qqj}
\mathfrak{Q}^j =&\n\cdot \nabla_{\a } \dt^j \theta-2|\n|^2\((|\n|\mathbb{E}(q,\theta))^{j-1} \begin{pmatrix}
\p_3^{j} q \\
\p_3^{j}v\cdot n
\end{pmatrix}\)_2\nonumber
\\&-\delta_1^j|\n|^2\pa_3   v\cdot \tau^\beta \nabla_\a \theta\cdot \tau^\beta + \mathbb{G}^{j}(\nabla^{\lnsim j} U,\dt^{\le j-1}\theta)
.
\end{align}
where $\delta_1^j$ is the Kronecker symbol. It is crucial to observe that, by \eqref{Eqdef},
\beq\label{ggdd1}
 \((|\n|\mathbb{E}(q,\theta))^{j-1} \begin{pmatrix}
\p_3^{j} q \\
\p_3^{j}v\cdot n
\end{pmatrix}\)_2=-\frac{1}{q|\n|}\((|\n|\mathbb{E}(q,\theta))^{j} \begin{pmatrix}
\p_3^{j} q \\
\p_3^{j}v\cdot n
\end{pmatrix}\)_1.
\eeq
It then follows from \eqref{qqj}, \eqref{vwf} and \eqref{ggdd1} that
 \begin{align}\label{bb11}
\mathfrak{Q}^j & =\frac{2|\n|}{q} {\bf U}^j_q+\n\cdot \nabla_{\a } \dt^j \theta-\frac{2|\n|}{\theta }  \dt^j\theta+\mathbb{H}^{j}(\delta_1^j\pa_3   v\cdot \tau ,\nabla^{\lnsim j} U,\dt^{\le j-1}\theta)
   .
\end{align}
The expression above is crucial since it allows us to adjust first $\dt^j \theta$ and then $(\p_3^{j} q ,
\p_3^{j}v)$ as the isentropic case to guarantee ${\bf U}^j_q=0$ and $\mathfrak{Q}^j=0$ on $\Sigma$ together.

Now define $U_0^{\delta,(0)}:=( q_0^{\delta,(0)},v_0^{\delta,(0)})=(  {\hat q}_0^\delta,{\hat v}_0^\delta)$ and $w_0^{\delta,(0)}=\theta_0^\delta$. Then
\beq
{\bf U}_q^0(U_0^{\delta,(0)},   w_0^{\delta,(0)}) =\bar{p},\quad\mathfrak{Q}^0(U_0^{\delta,(0)},   w_0^{\delta,(0)})=0\text{ on }\Sigma.
\eeq
Now, suppose that $j\ge 0$ and that  $U_0^{\delta,(j)}:=( q_0^{\delta,(j)},v_0^{\delta,(j)})$ and $w_\ell^{\delta,(j)} $ for $\ell=0,\dots,j$ have been constructed so that  the following $j$-th order boundary conditions hold:
\beq\label{jcompp}
 {\bf U}_q^\ell( \nabla^{\le \ell} U_0^{\delta,(j)},   w_{\le \ell}^{\delta,(j)})=\dt^\ell \bar{p},\quad\mathfrak{Q}^\ell (\nabla^{\le \ell}U_0^{\delta,(j)},w_{\le \ell}^{\delta,(j)})=0\text{ on }\Sigma,\ \ell=0,\dots,j,
\eeq
we will construct $U_0^{\delta,(j+1)}$ and $w_\ell^{\delta,(j+1)} $ for $\ell=0,\dots,j+1$ such that \eqref{jcompp} holds for $j$ replaced by $j+1$.
First, define $w_0^{\delta,(j+1)}=\theta_0^\delta$ and $w_{\ell}^{\delta,(j+1)} ,\ \ell=1,\dots,j+1$, recursively, as the solution to
\begin{equation}\label{wdef}
  \begin{cases}
    K w_{\ell}^{\delta,(j+1)}-\Delta_{\a_0^\delta}w_{\ell}^{\delta,(j+1)}=\Pi_\delta\( K\dt^{j+1}\theta(0)-\Delta_{\a_0}  \dt^{j+1}\theta(0)\)&\text{in }\Omega\\
\nabla_{\a_{0}^\delta}w_{\ell}^{\delta,(j+1)}\cdot \n_{0}^\delta-\frac{2| \n_{0}^\delta|}{\theta_0^\delta}w_{\ell}^{\delta,(j+1)}= - \mathbb{H}^{\ell}(\delta_1^{\ell}\pa_3  v_0^{\delta,(j)}\cdot \tau_0^{\delta},\nabla^{\lnsim \ell}  U_0^{\delta,(j)},w_{\le \ell-1}^{\delta,(j+1)}) &\text{on }\Sigma,
  \end{cases}
\end{equation}
where $\dt^{j+1}\theta(0)$ is the one  of \eqref{EFv}  in Theorem \ref{mainthm} and $\tau_0^{\delta}=\tau(\eta_0^\delta)$. Here the constant $K>0$ in \eqref{wdef} has been chosen sufficiently large with respect to $\frac{2| \n_{0}^\delta|}{\theta_0^\delta}$ so as to guarantee the solvability of \eqref{wdef} (see \cite{Evans}).
Then we define
\beq\label{aa16}
\hat \Phi^{\delta,(j+1)}=-\(| \n_{0}^\delta|\mathbb{E}({\hat q}_0^\delta,\theta_0^\delta)\)^{-(j+1)}\begin{pmatrix} {\bf U}_{q}^{j+1}( \nabla^{\le j+1} U_0^{\delta,(j)}, w_{\le {j+1}}^{\delta,(j+1)})\smallskip\smallskip\\0
\end{pmatrix} \text{ on }\Sigma
\eeq
and
\beq \label{aa1666}
\Phi^{\delta,(j+1)}:=\begin{pmatrix}
\hat \Phi^{\delta,(j+1)}_1 \\
\hat \Phi^{\delta,(j+1)}_2   n_0^\delta
\end{pmatrix}\text{ on }\Sigma.
\eeq
Now we construct  $U_0^{\delta,(j+1)}:=\left( q_0^{\delta,(j+1)},v_0^{\delta,(j+1)}\right)$ such that
\beq\label{aa099jjj}
\p_3^\ell U_{0}^{\delta,(j+1)}= \p_3^\ell U_{0}^{\delta,(j)},\ \ell=0,\dots,j\text{ and }\p_3^{j+1} U_{0}^{\delta,(j+1)}= \p_3^{j+1} U_{0}^{\delta,(j)}
+\Phi^{\delta,(j+1)}\text{ on }\Sigma;
\eeq
we refer to \cite{LM} for such existence. We claim that
\beq  \label{aa099jjj22}
w_{\ell}^{\delta,(j+1)}=w_{\ell}^{\delta,(j)},\quad \ell=0,\dots,j.
\eeq
The claim is proven by induction on $\ell$. First, recall that for $j\ge 0$,
\beq
w_{0}^{\delta,(j+1)}=w_{0}^{\delta,(j)}=\theta_0^\delta.
\eeq
Now for $j\ge 1$, suppose that $ 0\le \ell\le j-1$ and that
\beq \label{gd2}
w_{k}^{\delta,(j+1)}=w_{k}^{\delta,(j)},\quad k=0,\dots,\ell,
\eeq
have been proved.
Due to our construction,
\beq \label{gd1}
\p_3^k U_{0}^{\delta,(j)}= \p_3^k U_{0}^{\delta,(j-1)},\ k=0,\dots,j-1\text{ and }\p_3^{j} v_{0}^{\delta,(j)}\cdot \tau_0^{\delta}= \p_3^{j} v_{0}^{\delta,(j-1)}\cdot \tau_0^{\delta}\text{ on }\Sigma.
\eeq
Then by \eqref{gd1} and \eqref{gd2}, we have
\begin{align}\label{gd3}
&\mathbb{H}^{\ell+1}(\delta_1^{\ell+1}\pa_3  v_0^{\delta,(j)}\cdot \tau_0^{\delta},\nabla^{\lnsim \ell+1}  U_0^{\delta,(j)},w_{\le \ell}^{\delta,(j+1)})
\nonumber
\\&\quad=\mathbb{H}^{\ell+1}(\delta_1^{\ell+1}\pa_3  v_0^{\delta,(j-1)}\cdot \tau_0^{\delta},\nabla^{\lnsim \ell+1}  U_0^{\delta,(j-1)},w_{\le \ell}^{\delta,(j)}).
\end{align}
It then follows from \eqref{wdef} (for $j+1$ and $j$) and \eqref{gd3} that
\beq
w_{\ell+1}^{\delta,(j+1)}=w_{\ell+1}^{\delta,(j)}.
\eeq
This proves the claim \eqref{aa099jjj22}.
It then follows from \eqref{aa099jjj}, \eqref{aa099jjj22} and \eqref{jcompp} that
\beq\label{jcompp2} {\bf U}_{q}^\ell( \nabla^{\le\ell} U_0^{\delta,(j+1)},   w_{\le \ell}^{\delta,(j+1)}) =\dt^\ell \bar{p}
,\quad \mathfrak{Q}^\ell (\nabla^{\le \ell}U_0^{\delta,(j+1)},w_{\le \ell}^{\delta,(j+1)}) =0\text{ on }\Sigma,\ \ell=0,\dots,j.
\eeq
Now we check that, by the definition \eqref{vwf}, \eqref{aa099jjj}, \eqref{aa1666} and \eqref{aa16},
\begin{align}\label{HHII}
&{\bf U}_{q}^{j+1}( \nabla^{\le j+1} U_0^{\delta,(j+1)} , w_{\le {j+1}}^{\delta,(j+1)})
\nonumber
\\&\quad={\( \(| \n_{0}^\delta|\mathbb{E}({\hat q}_0^\delta,\theta_0^\delta)\)^{j+1} \begin{pmatrix}
\p_3^{j+1} q^{\delta,(j+1)}_0 \\
\p_3^{j+1}v^{\delta,(j+1)}_0\cdot   n_0^\delta
\end{pmatrix}\)_1}+ {  \mathbb{F}_1^{j+1}(\nabla^{\lnsim j+1} U_0^{\delta,(j+1)}, w_{\le {j}}^{\delta,(j+1)})}+\frac{{\hat q}_0^\delta}{\theta_0^\delta}w_{j+1}^{\delta,(j+1)}\nonumber
\\&\quad={\( \(| \n_{0}^\delta|\mathbb{E}({\hat q}_0^\delta,\theta_0^\delta)\)^{j+1}\begin{pmatrix}
\p_3^{j+1} q^{\delta,(j+1)}_0 \\
\p_3^{j+1}v^{\delta,(j+1)}_0\cdot   n_0^\delta
\end{pmatrix}\)_1}+ {  \mathbb{F}_1^{j+1}(\nabla^{\lnsim j+1} U_0^{\delta,(j)}, w_{\le {j}}^{\delta,(j+1)})}+\frac{{\hat q}_0^\delta}{\theta_0^\delta}w_{j+1}^{\delta,(j+1)}\nonumber
\\&\quad={ {\bf U}_{q}^{j+1}( \nabla^{\le j+1} U_0^{\delta,(j)} , w_{\le {j+1}}^{\delta,(j+1)})}+\(\(| \n_{0}^\delta|\mathbb{E}({\hat q}_0^\delta,\theta_0^\delta)\)^{j+1}\hat \Phi^{\delta,(j+1)}\)_1
\nonumber
\\&\quad=0
\end{align}
and then that, by the expression \eqref{bb11}, \eqref{HHII} and \eqref{wdef},
\begin{align}\label{HHII2}
&\mathfrak{Q}^{j+1} (\nabla^{\le j+1} U_0^{\delta,(j+1)},w_{\le j+1}^{\delta,(j+1)})
\\&\quad=\nabla_{\a_{0}^\delta}w_{j+1}^{\delta,(j+1)}\cdot \n_{0}^\delta-\frac{2| \n_{0}^\delta|}{\theta_0^\delta}w_{j+1}^{\delta,(j+1)}+\mathbb{H}^{j+1}(\delta_1^j\pa_3  v_0^{\delta,(j)}\cdot \tau_0^{\delta},\nabla^{\lnsim j+1}  U_0^{\delta,(j)},w_{\le j}^{\delta,(j+1)})  =0.\nonumber
\end{align}
\eqref{jcompp2}--\eqref{HHII2} imply that  $U_0^{\delta,(j+1)} $ and  $w_\ell^{\delta,(j+1)} $ for $  \ell=0,\dots, j+1$ satisfy the $(j+1)$-th order boundary conditions, $i.e.,$ \eqref{jcompp} with $j$ replaced by $j+1$.

Now  the  desired smoothed   initial data of \eqref{approximate} with $\kappa=0$ is constructed as $(\eta_0^\delta, q_0^\delta,v_0^\delta, \theta_0^\delta)$ with $(q_0^\delta,v_0^\delta):= U_0^{\delta,(3)} $. Let us denote $\(\eta^{(\delta)}, q^{(\delta)},v^{(\delta)},  \theta^{(\delta)}\)$ as the solution to \eqref{approximate} with $\kappa=0$, and $\a^{(\delta)}:=\a (\eta^{(\delta)}) $ (and $\n^{(\delta)}$). Take the initial data $\(\eta^{(\delta)}(0), q^{(\delta)}(0),v^{(\delta)}(0),  \theta^{(\delta)}(0)\)=(\eta_0^\delta, q_0^\delta,v_0^\delta, \theta_0^\delta)$, then construct the initial data $ (\partial_t^j q^{(\delta)}(0),\partial_t^j v^{(\delta)}(0),\partial_t^j \theta^{(\delta)}(0))$ for $j=1,2,3,4$ (and $\dt^j\eta^{(\delta)}(0)=\dt^{j-1}v^{(\delta)}(0)$ for $j=1,2,3$) recursively by
\begin{align}\label{in0defk1app}
\partial_t^j q^{(\delta)}(0)
 &:=  \partial_t^{j-1}
\( -q^{(\delta)}\diverge_{\a^{(\delta)} } v^{(\delta)}+\frac{q^{(\delta)}}{\theta^{(\delta)}}\partial_t\theta^{(\delta)}\)(0) ,
\\\partial_t^j v^{(\delta)}(0)&
 :=  \partial_t^{j-1}
\( -\frac{R\theta^{(\delta)} }{q^{(\delta)} } \nabla_{\a^{(\delta)}} q^{(\delta)} \)(0),\label{in0defk2}
\\
\partial_t^j  \theta^{(\delta)}(0)
 &:= \frac{R\theta_{0}^{\delta } }{c_v q_{0}^{\delta } } \(-\[\dt^{j-1},\frac{c_v q^{(\delta)} }{R\theta^{(\delta)} }\]\partial_t\theta^{(\delta)}+\dt^{j-1}\( -q^{(\delta)} \Div_{\a^{(\delta)} } v ^{(\delta)} +\mu\Delta_{\a^{(\delta)} } \theta ^{(\delta)} +\Xi^\delta\)\) (0)  ,\label{in0defk3}
\end{align}
where the corrector  $\Xi^\delta:=-\Delta\chi^\delta+\chi^\delta$ with $\chi^\delta$, constructed by the time extension, such that $\dt^j\chi^\delta(0)=\chi_j^\delta$, $j=0,1,2,3$, with $\chi_3^\delta=0$ and  $\chi_j^\delta$, $j=0,1,2$,  the solutions to
\beq\label{g12}
\begin{cases}
-\Delta\chi_j^\delta +\chi_j^\delta &
\\\quad=\frac{c_v q_{0}^\delta}{R\theta_{0}^\delta}w_{j+1}^{\delta,(3)}+\(\[\dt^j,\frac{c_v q^{(\delta)} }{R\theta^{(\delta)} }\]\partial_t\theta^{(\delta)}+\dt^j\(  q^{(\delta)} \Div_{\a^{(\delta)} } v ^{(\delta)} -\mu\Delta_{\a^{(\delta)} } \theta ^{(\delta)} \)\) (0)&\text{in }\Omega
\\ \p_3 \chi_j^\delta=0  &\text{on }\Sigma.
\end{cases}
\eeq
The reason of defining $\Xi^\delta$ in its special form is to guarantee that $\dt^4\Xi^\delta\in L^\infty(0,\infty;(H^1(\Omega))^\ast)$, which will be used crucially in the next section.  It is noted that $\dt^{j-1} \Xi^\delta(0)$ depends only on $\partial_t^\ell  \theta^{(\delta)}(0),\ \ell=0,\dots, j-1$, so \eqref{in0defk3} is well-defined.
 Then due to the introduction of $\Xi^\delta$, by \eqref{in0defk3} and \eqref{g12} (and \eqref{in0defk1app}, \eqref{in0defk2}), we recursively have
\beq\label{aa6771}
\partial_t^j  \theta^{(\delta)}(0) =w_j^{\delta,(3)},\quad j=1,2,3,
\eeq
This in turn implies that
\beq\label{aa677}
(\partial_t^j q^{(\delta)}(0), \partial_t^j v^{(\delta)}(0))^T = {\bf U}^j(\nabla^{\le j} U^{(\delta)},\dt^{\le j}\theta^{(\delta)}) (0),\ j=1,2,3,4.
\eeq
Thus, since  $U_0^{\delta,(3)} $ and  $w_\ell^{\delta,(3)} $ for $  \ell=0,1,2, 3$ satisfy the third order boundary conditions, $i.e.,$ \eqref{jcompp} with $j$ replaced by $3$, by \eqref{aa677} and \eqref{aa6771}, we conclude that $\(\eta_{0}^{\delta},q_{0}^{\delta}, v_{0}^{\delta}, \theta_{0}^{\delta}\)$ satisfy the corresponding third order compatibility conditions of \eqref{approximate} with $\kappa=0$:
\beq\label{inc3k2app}
\dt^j q^{(\delta)}(0)=\dt^j \bar{p},\quad
\dt^j(\nabla_{\a^{(\delta)}}  \theta^{(\delta)}\cdot\n^{(\delta)} ) (0)=0\text{ on }\Sigma,\ j=0,1,2,3.
\eeq

It is standard that $(\eta_0^\delta, \theta_0^\delta)$ converge to $(\eta_0,\theta_0)$ as $\delta\rightarrow0$, however, in general, $( q_0^\delta,v_0^\delta)$ do not converge to $( q_0 ,v_0 )$. But if $(\eta_0,q_0,v_0,\theta_0)$ satisfy the third order compatibility conditions \eqref{fgfgdef}, then $(  q_0^\delta,v_0^\delta)$ converge to $ ( q_0 ,v_0 )$ as well as $\Xi^\delta$ converges to $0$ as $\delta\rightarrow0$. The precise statement of such convergences  is recorded in the following proposition.

\begin{proposition}\label{covrem}
Let $(\eta_0, q_0,v_0, \theta_0) $ be the initial data  of \eqref{EFv} given in Theorem \ref{mainthm}.  Then as $\delta\rightarrow 0$,
\begin{align}\label{covremeq1}
 &( \eta_0^\delta, q_0^\delta,v_0^\delta,\theta_0^\delta )\rightarrow (\eta_0, q_0,v_0, \theta_0) \text{ in }H^4(\Omega)  \text{ and }\bp^4\eta_0^\delta\cdot\n_0^\delta\rightarrow \bp^4\eta_0\cdot\n_0\text{ in }L^2(\Sigma),
 \\ &
(\dt^j q^{(\delta)}(0),\dt^j v^{(\delta)}(0))\rightarrow (\dt^j q(0 ),\dt^j v(0 )) \text{ in }H^{4-j}(\Omega),\ j=1,2,3,4,
\\&
 \dt^j \theta^{(\delta)}(0)\rightarrow \dt^j \theta(0) \text{ in }H^{5-j}(\Omega),\ j=1,2,3,\ \dt^4 \theta^{(\delta)}(0)\rightarrow \dt^4 \theta(0) \text{ in }L^2(\Omega)\label{covremeq3}
\end{align}
and
\beq\label{covremeq355}
\dt^j\Xi^\delta\rightarrow 0\text{ in }L^\infty(0,\infty;H^{3-j}(\Omega)),\ j=0,1,2,3\text{ and }\dt^4\Xi^\delta\rightarrow 0\text{ in }L^\infty(0,\infty;(H^1(\Omega))^\ast).
\eeq
\end{proposition}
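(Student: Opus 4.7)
The overall strategy is to exploit the third-order compatibility conditions \eqref{inc3} satisfied by $(\eta_0, q_0, v_0, \theta_0)$: these are precisely what forces the boundary adjustments $\Phi^{\delta,(j+1)}$ and the corrector $\Xi^\delta$, both designed in Section \ref{sec3} to enforce compatibility at the smooth level, to vanish in the limit $\delta \to 0$. I would proceed in three stages mirroring the iterative construction itself.

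\emph{Stage 1: baseline smoothings.} First I would establish the convergences $\eta_0^\delta \to \eta_0$, $\hat q_0^\delta \to q_0$, $\hat v_0^\delta \to v_0$, $\theta_0^\delta \to \theta_0$ in $H^4(\Omega)$. For $\hat v_0^\delta$ this is a standard mollifier property. For $\eta_0^\delta$ and $\hat q_0^\delta$, subtracting $\Pi_\delta \eta_0$ respectively $\Pi_\delta q_0$ from the defining equations \eqref{etadef} (with $\kappa=\delta$) and \eqref{moq} yields Laplace problems whose data tend to zero, so elliptic regularity concludes. For $\theta_0^\delta$, the coefficient $\a_0^\delta$ in the Neumann problem \eqref{mo2} converges to $\a_0$ while the interior source converges to $\theta_0 - \Delta_{\a_0}\theta_0$, and coercivity of $I - \Delta_{\a_0^\delta}$ with the Neumann boundary operator gives convergence in $H^4$. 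The trace theorem applied at the level of tangential derivatives then produces the boundary convergence in \eqref{covremeq1}.

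\emph{Stage 2: inductive control of $(U_0^{\delta,(j)},w_\ell^{\delta,(j)})$.} I would prove by induction on $j=0,1,2,3$ that $U_0^{\delta,(j)} \to (q_0,v_0)$ in $H^4(\Omega)$ and $w_\ell^{\delta,(j)} \to \dt^\ell\theta(0)$ in $H^{5-\ell}(\Omega)$ for $\ell \le j$. Assuming the hypothesis at level $j$, the function $w_{j+1}^{\delta,(j+1)}$ solves the elliptic Robin problem \eqref{wdef}, whose interior source converges in $H^{3-j}(\Omega)$ and whose boundary source $-\mathbb{H}^{j+1}(\cdots)$ converges in $H^{5/2-j}(\Sigma)$ to the corresponding quantity evaluated at the true data. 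The crucial point is that by the identity \eqref{bb11} combined with $\dt^{j+1}q(0)=0$ and $\mathfrak{Q}^{j+1}(0)=0$ on $\Sigma$, this limiting boundary datum coincides with $\nabla_{\a_0}\dt^{j+1}\theta(0)\cdot\n_0 - \tfrac{2|\n_0|}{\theta_0}\dt^{j+1}\theta(0)$, so $\dt^{j+1}\theta(0)$ itself is the unique solution to the limiting Robin problem, and elliptic regularity gives $w_{j+1}^{\delta,(j+1)} \to \dt^{j+1}\theta(0)$ in $H^{5-(j+1)}$. Feeding this back into ${\bf U}_q^{j+1}$ and invoking $\dt^{j+1}\bar p = 0$ forces $\Phi^{\delta,(j+1)} \to 0$ in the appropriate trace norm; the extension construction of \cite{LM} then yields $U_0^{\delta,(j+1)} - U_0^{\delta,(j)} \to 0$ in $H^4$, closing the induction.

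\emph{Stage 3: time derivatives and the corrector.} Applying Stage 2 at $j=3$ gives $(q_0^\delta, v_0^\delta) = U_0^{\delta,(3)} \to (q_0, v_0)$ in $H^4$ and $w_\ell^{\delta,(3)} \to \dt^\ell\theta(0)$ in $H^{5-\ell}$ for $\ell \le 3$. The algebraic identities \eqref{aa677} and \eqref{aa6771}, together with direct inspection of \eqref{in0defk3}, immediately produce the time-derivative convergences in \eqref{covremeq3}. For $\Xi^\delta = -\Delta\chi^\delta + \chi^\delta$, I would rewrite the true temperature equation of \eqref{EFv} as
\begin{equation*}
\frac{c_v q_0}{R\theta_0}\dt^{j+1}\theta(0) = -\Big([\dt^j,\tfrac{c_v q}{R\theta}]\dt\theta + \dt^j\big(q\,\diva v - \mu\Delta_\a\theta\big)\Big)(0),
\end{equation*}
so that the right-hand side of \eqref{g12} becomes the difference between $\tfrac{c_v q_0^\delta}{R\theta_0^\delta}w_{j+1}^{\delta,(3)}$ and its true-data analogue plus a commutator perturbation tending to zero by Stages 1--2. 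This forces $\chi_j^\delta \to 0$ in $H^{3-j}(\Omega)$ for $j = 0, 1, 2$, while $\chi_3^\delta \equiv 0$ by construction; a smooth time extension with controlled higher derivatives then delivers \eqref{covremeq355}, where the negative-index bound at $j = 4$ reflects the limited regularity of $\chi_j^\delta$ already present at $t=0$. The main obstacle throughout is the regularity bookkeeping in Stage 2: each iteration brings normal derivatives to the boundary through $\mathbb{F}^{j+1}$ and $\mathbb{H}^{j+1}$, and it is exactly the compatibility-based cancellation producing $\Phi^{\delta,(j+1)} \to 0$ that prevents a spurious loss of derivative at each level.
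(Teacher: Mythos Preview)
Your proposal is correct and follows essentially the same three-stage structure as the paper's proof: baseline mollifier and elliptic convergences, an induction on $j$ showing $U_0^{\delta,(j)}\to U_0$ in $H^4$ and $w_\ell^{\delta,(j)}\to\dt^\ell\theta(0)$ in $H^{5-\ell}$ (using the compatibility conditions \eqref{inc3} to force $\Phi^{\delta,(j+1)}\to 0$), and then reading off the time-derivative and corrector convergences. The only point you leave implicit is that the lower-index terms $w_\ell^{\delta,(j+1)}$ for $\ell\le j$ are handled by the identity \eqref{aa099jjj22}, which the paper invokes explicitly; otherwise your argument matches the paper's.
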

\begin{proof}
First, it follows by the standard elliptic theory and the usual properties of mollifiers that
$( \eta_0^\delta, \theta_0^\delta )\rightarrow (\eta_0,   \theta_0)$  in $H^4(\Omega)$. Next, we write
 \begin{align}
 &\bp^4\eta_0^\delta\cdot\n_0^\delta-\bp^4\eta_0\cdot\n_0
 \nonumber\\&\quad= \Lambda_\delta^2\bp^4\eta_0 \cdot\(\n_0^\delta- \n_0\)+\Lambda_\delta^2\(\bp^4\eta_0\cdot  \n_0\) -\bp^4\eta_0\cdot\n_0+\[\Lambda_\delta^2,  \n_0\]\bp^4\eta_0\text{ on }\Sigma,
 \end{align}
 which together with \eqref{loss}--\eqref{lossew} and \eqref{es0-3} implies that
 $\bp^4\eta_0^\delta\cdot\n_0^\delta\rightarrow \bp^4\eta_0\cdot\n_0$ in $L^2(\Sigma)$.

Now we claim that for $j=0,1,2,3$, $U_0^{\delta,(j)}\rightarrow U_0$ in $H^4(\Omega)$   and $w_\ell^{\delta,(j)} \rightarrow\dt^\ell \theta(0) \text{ in }H^{5-\ell}(\Omega)$, $\ell=1,\dots,j$ (with the understanding that nothing needs to be known when $j=0$) as $\delta\rightarrow0$. We prove the claim by induction on $j$. First, for $j=0$, no adjustment has been made and so it is direct to have that $U_0^{\delta,(0)}\rightarrow U_0$ in $H^4(\Omega)$ as $\delta\rightarrow0$. Next, suppose that $0\le j\le 2$ and that $U_0^{\delta,(\ell)}\rightarrow U_0$ in $H^4(\Omega)$ for $\ell=0,\dots,j$ and $w_\ell^{\delta,(j)} \rightarrow\dt^\ell \theta(0) \text{ in }H^{5-\ell}(\Omega)$ for $\ell=1,\dots,j$  as $\delta\rightarrow0$ have been proved. Then by \eqref{wdef}, \eqref{aa099jjj22} and the induction assumption, one deduces that
$w_\ell^{\delta,(j+1)} \rightarrow\dt^\ell \theta(0) \text{ in }H^{5-\ell}(\Omega)$ for $\ell=1,\dots,j+1$  as $\delta\rightarrow0$.
 This together with the definition of  $\Phi^{\delta,(j+1)}$, the induction assumption and that $(\eta_0,q_0,v_0,\theta_0)$ satisfy the third order compatibility conditions \eqref{fgfgdef}, implies that $\Phi^{\delta,(j+1)}\rightarrow0$ in $H^{4-j-3/2}(\Sigma)$ as $\delta\rightarrow0$. So by the definition of $U_0^{\delta,(j+1)}$, $U_0^{\delta,(j+1)}-U_0^{\delta,(j)}\rightarrow 0$ in $H^4(\Omega)$ as $\delta\rightarrow0$, which together with the induction assumption implies that $U_0^{\delta,(j+1)}\rightarrow U_0$ in $H^4(\Omega)$ as $\delta\rightarrow0$. The claim is thus proved.

 Recalling $(q_0^\delta,v_0^\delta)= U_0^{\delta,(3)} $, \eqref{aa6771} and \eqref{wdef}, the rest convergences in \eqref{covremeq1}--\eqref{covremeq3} follow.
These convergences and the definition of $\Xi^\delta$ (recalling \eqref{g12}) yield the convergence \eqref{covremeq355}.
\end{proof}

Now we construct the approximation $ \theta_{0}^{\kappa,\delta} $ of $\theta_{0}^{\delta}$.  Assume that
\begin{equation}\label{aa11kkk}
\dt^j\begin{pmatrix}
\eta\\
q \\
v
\\ \theta
\end{pmatrix}  = \dt^{j-1}
\begin{pmatrix}
v +\psi^\kappa\\
- q\divak  v+\frac{q}{\theta}\partial_t\theta   \\-
\frac{R\theta}{q} \nabla_\ak q
\\ \frac{R\theta}{c_v q}\(-q\divak v +\mu\Delta_\ak \theta+\Xi^\delta \)
\end{pmatrix},\ j\ge 1.
\end{equation}
By the iteration, one finds that
\beq\label{vwfk1}
\dt^j q=  \(\frac{\mu R}{c_v}|\nk|^2 \)^j\(\frac{\theta}{q}\)^{j-1}\p_3^{2j}\theta+  \mathbb{Y}^j(\bp^{\le 2j -k}\p_3^{ k\le 2j-1} \theta)
,\ j\ge 1;
\eeq
since we will not adjust $\(\eta_{0}^{\delta},q_{0}^{\delta}, v_{0}^{\delta}\)$ later, we shall not write out the dependence of $ \mathbb{Y}^j$ (and $\mathbb{Z}^j$ below) on $(\eta,q,v)$. Similarly, one has
\beq\label{vwfk2}
\dt^j(\n\cdot \nabla_{\a }  \theta)=  |\nk|^2\(\frac{\mu R\theta}{c_v q}|\nk|^2 \)^j \p_3^{2j+1}\theta+  \mathbb{Z}^j(\bp^{\le 2j+1-k}\p_3^{ k\le 2j}\theta )
,\ j\ge 1.
\eeq
Now define a sequence of functions on $\Sigma$:  $\mathfrak g_j^{\kappa,\delta}$, $j=0,\cdots,7$ such that $\mathfrak g_0^{\kappa,\delta}=\theta_0^\delta$ and recursively that for $j\ge 1$,
\beq\label{oo01}
\mathfrak g_j^{\kappa,\delta}=\begin{cases}
-\(\frac{\mu R}{c_v}|\n_0^{\kappa,\delta}|^2 \)^{-\ell}\(\frac{q_0^{ \delta} }{\theta_0^{\delta}}\)^{\ell-1}  \mathbb{Y} ^\ell(\bp^{\le 2\ell -k} \mathfrak g_{ \le 2\ell-1}^{\kappa,\delta} ),\ j=2\ell
\\
-|\n_0^{\kappa,\delta}|^{-2}\(\frac{\mu R\theta_0^{ \delta}}{c_v q_0^{ \delta}}|\n_0^{\kappa,\delta}|^2 \)^{-\ell}   \mathbb{Z}^\ell(\bp^{\le 2\ell+1-k} \mathfrak g_{ \le 2\ell}^{\kappa,\delta} ),\ j=2\ell+1,
\end{cases}
\eeq
where $\n^{\kappa,\delta}_{0}=\ak( \eta_0^{ \delta})$ (and $ \a_{0}^{\kappa,\delta}, J_{0}^{\kappa,\delta}$, etc.).
Now we construct $\theta_{0}^{\kappa,\delta }$ such that (see \cite{LM})
\begin{equation}\label{oo1}
    \pa_3^j\theta_{0}^{\kappa,\delta }=\mathfrak g_j^{\kappa,\delta},\ j=0,\cdots,7.
\end{equation}

Let us denote $\(\eta^{(\kd)}, q^{(\kd)},v^{(\kd)},  \theta^{(\kd)}\)$ as the solution to \eqref{approximate},   $\a^{(\kd)}:=\a^\kappa (\eta^{(\kd)}) $ (and $\n^{(\kd)}$) and $\psi^{(\kd)}:=\psi^\kappa (\eta^{(\kd)},v^{(\kd)}) $. Take the initial data $\(\eta^{(\kd)}(0), q^{(\kd)}(0),v^{(\kd)}(0),  \theta^{(\kd)}(0)\)$ $=\(\eta_{0}^{\delta},q_{0}^{\delta}, v_{0}^{\delta}, \theta_{0}^{\kappa,\delta}\)$, then  construct the initial data $ (\dt^j\eta^{(\kd)}(0),\partial_t^j q^{(\kd)}(0),\partial_t^j v^{(\kd)}(0),\partial_t^j \theta^{(\kd)}(0))$ for $j=1,2,3,4$   recursively by
\begin{align}\label{in0defk1kk1}
\dt^j\eta^{(\kd)}(0)&:=\dt^{j-1}\(v^{(\kd)}+\psi^{(\kd)}\)(0),\\\label{in0defk1kk2}
\partial_t^j q^{(\kd)}(0)
 &:=  \partial_t^{j-1}
\(-q^{(\kd)}\diverge_{\a^{(\kd)}}  v^{(\kd)}+ \frac{q^{(\kd)}}{\theta^{(\kd)}}\partial_t\theta^{(\kd)}\) (0) ,
\\\label{in0defk1kk3}\partial_t^j v^{(\kd)}(0)&
 :=  \partial_t^{j-1}
\( -\frac{R\theta^{(\kd)} }{q^{(\kd)} } \nabla_{\a^{(\kd)}} q^{(\kd)} \)  (0),
\\\label{in0defk1kk4}
\partial_t^j  \theta^{(\kd)}(0)
 &:= \dt^{j-1}\( \frac{R\theta^{(\kd)}}{c_v q^{(\kd)}}\(-q^{(\kd)}\diverge_{\a^{(\kd)}} v^{(\kd)} +\mu\Delta_{\a^{(\kd)}} \theta^{(\kd)}+\Xi^\delta \)\) (0)  .
\end{align}
Thus, by \eqref{oo1}, \eqref{oo01}, \eqref{vwfk1} and \eqref{vwfk2}, one deduces that
\beq\label{inc3k2appqq}
\dt^j q^{(\kd)}(0)=\dt^j \bar{p},\quad
\dt^j(\nabla_{\a^{(\kd)}}  \theta^{(\kd)}\cdot\n^{(\kd)} ) (0)=0\quad\text{on }\Sigma,\ j=0,1,2,3,
\eeq
that is, $\(\eta_{0}^{\delta},q_{0}^{\delta}, v_{0}^{\delta}, \theta_{0}^{\kappa,\delta}\)$ satisfy the   third order compatibility conditions of \eqref{approximate}.

Since $( \eta_0^\delta,q_0^\delta,v_0^\delta, \theta_0^\delta)$ are smooth, for each fixed $\delta>0$ it is easy to show that $ \theta_{0}^{\kappa,\delta}$ converges to $ \theta_0^\delta$ as $\kappa\rightarrow0$. The precise statement of such convergence  is recorded in the following proposition.
\begin{proposition}\label{covremk}
Let $( \eta_0^\delta,q_0^\delta,v_0^\delta, \theta_0^\delta) $ be those ones in Proposition \ref{covrem}. Then for fixed $\delta>0$, it holds that as $\kappa\rightarrow 0$, $\theta_{0}^{\kappa,\delta}\rightarrow \theta_0^\delta$ in $H^5(\Omega)$ and for $j=1,2,3,4,$
\beq\label{covremeq1k}
\begin{split}
&
(\dt^j q^{(\kd)}(0),\dt^j v^{(\kd)}(0))\rightarrow (\dt^j q^{(\delta)}(0),\dt^j v^{(\delta)}(0)) \text{ in }H^{4-j}(\Omega),
\\ &  (\dt^j \eta^{(\kd)}(0) , \dt^j \theta^{(\kd)}(0))\rightarrow  (\dt^j \eta^{(\delta)}(0),\dt^j \theta^{(\delta)}(0)) \text{ in }H^{5-j}(\Omega).
\end{split}
\eeq
\end{proposition}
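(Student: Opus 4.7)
\smallskip

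\noindent\textbf{Proof plan.} The strategy has two parts: first show that the prescribed traces $\mathfrak g_j^{\kappa,\delta}$ converge to $\pa_3^j\theta_0^\delta|_\Sigma$ on $\Sigma$ as $\kappa\to 0$, and then use a continuous linear lifting to produce an extension $\theta_0^{\kappa,\delta}$ whose bulk $H^5$-norm distance from $\theta_0^\delta$ is controlled by the distance of the traces. The convergences for the time-differentiated data \eqref{covremeq1k} then follow directly from \eqref{in0defk1kk1}--\eqref{in0defk1kk4} and the smoothness of the other initial data.

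\smallskip

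\noindent\emph{Step 1: convergence of the smoothed geometric quantities.} Since $\eta_0^\delta$ is smooth (it lies in $C^\infty(\Omega)$ by construction), the mollification $\mathcal S_\kappa(\eta_0^\delta)$ defined via \eqref{etadef} converges to $\eta_0^\delta$ in every $H^s(\Omega)$ as $\kappa\to 0$. Consequently, $\a_0^{\kappa,\delta}=\a(\mathcal S_\kappa\eta_0^\delta)\to\a_0^\delta$, $\n_0^{\kappa,\delta}\to\n_0^\delta$ and $J_0^{\kappa,\delta}\to J_0^\delta$ in every Sobolev norm, and likewise $\psi^\kappa(\eta_0^\delta,v_0^\delta)\to 0$ in every Sobolev norm as $\kappa\to 0$.

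\smallskip

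\noindent\emph{Step 2: inductive convergence of the trace data.} By the proof of Proposition \ref{covrem} and \eqref{inc3k2app}, the smooth quadruple $(\eta_0^\delta,q_0^\delta,v_0^\delta,\theta_0^\delta)$ satisfies the third order compatibility conditions of \eqref{approximate} with $\kappa=0$. Applying the reduction \eqref{vwfk1}--\eqref{vwfk2} to $(\eta_0^\delta,q_0^\delta,v_0^\delta,\theta_0^\delta)$ with $\kappa=0$, the normal traces $\mathfrak g_j^\delta:=\pa_3^j\theta_0^\delta|_\Sigma$ satisfy the identities obtained from \eqref{oo01} by replacing $\n_0^{\kappa,\delta}$ with $\n_0^\delta$ and $\a_0^{\kappa,\delta}$ with $\a_0^\delta$ in $\mathbb{Y}^\ell,\mathbb{Z}^\ell$. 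Because each $\mathbb{Y}^\ell,\mathbb{Z}^\ell$ is a polynomial expression (Sobolev-algebra-continuous at the relevant regularity level) in its arguments and in $\a_0^{\kappa,\delta}$, induction on $j$ combined with Step 1 yields
\[
\mathfrak g_j^{\kappa,\delta}\longrightarrow \mathfrak g_j^\delta=\pa_3^j\theta_0^\delta|_\Sigma\quad\text{in }H^{s-j-1/2}(\Sigma),\ j=0,\dots,7,
\]
for any fixed $s$ large enough so that $\theta_0^\delta\in H^s(\Omega)$.

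\smallskip

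\noindent\emph{Step 3: extension with $H^5$-convergence and time-derivative data.} By the Lions--Magenes trace lifting theorem, there is a continuous linear operator $\mathcal L$ from $\prod_{j=0}^{7}H^{s-j-1/2}(\Sigma)$ into $H^s(\Omega)$ such that $\pa_3^j\mathcal L(\{h_j\})|_\Sigma=h_j$ for $j=0,\dots,7$. Choose $\theta_0^{\kappa,\delta}:=\theta_0^\delta+\mathcal L\big(\{\mathfrak g_j^{\kappa,\delta}-\mathfrak g_j^\delta\}_{j=0}^{7}\big)$, which satisfies \eqref{oo1} by construction. By Step 2 and the continuity of $\mathcal L$, $\theta_0^{\kappa,\delta}\to\theta_0^\delta$ in $H^s(\Omega)$, and in particular in $H^5(\Omega)$. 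Finally, the quantities in \eqref{in0defk1kk1}--\eqref{in0defk1kk4} are given by fixed polynomial and elliptic operations in $(\eta_0^\delta,q_0^\delta,v_0^\delta,\theta_0^{\kappa,\delta})$, $\a_0^{\kappa,\delta}$, $\psi^\kappa$ and $\Xi^\delta$; thus combining Steps 1--2, the $H^5$-convergence of $\theta_0^{\kappa,\delta}$, and the usual Sobolev product estimates yields \eqref{covremeq1k}.

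\smallskip

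\noindent\emph{Main obstacle.} The one step requiring care is Step 2: the induction must be organized so that the loss of regularity from applying $\bp$ in \eqref{oo01} is consistent with the Sobolev spaces in which each $\mathfrak g_j^{\kappa,\delta}$ is required to converge, since the trace of $\pa_3^j\theta_0^{\kappa,\delta}$ should live in $H^{s-j-1/2}(\Sigma)$ for the lifting of Step 3 to produce $H^5$-convergence. Once the regularity bookkeeping is set up (choosing $s$ sufficiently large, which is possible since $\eta_0^\delta,q_0^\delta,v_0^\delta,\theta_0^\delta$ are smooth), the remaining arguments are routine.
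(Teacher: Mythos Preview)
Your proposal is correct and follows essentially the same approach as the paper, which is quite terse here: the paper simply notes that since $(\eta_0^\delta,q_0^\delta,v_0^\delta,\theta_0^\delta)$ are smooth and satisfy the third order compatibility conditions \eqref{inc3k2app}, the extension $\theta_0^{\kappa,\delta}$ in \eqref{oo1} ``can be constructed so that $\theta_0^{\kappa,\delta}\to\theta_0^\delta$ in $H^5(\Omega)$'', and that \eqref{covremeq1k} follows from \eqref{in0defk1kk1}--\eqref{in0defk1kk4}, elliptic theory, and mollifier properties. Your Steps~1--3 are precisely the natural way to make this rigorous: the compatibility conditions at $\kappa=0$ force $\partial_3^j\theta_0^\delta|_\Sigma$ to satisfy the $\kappa=0$ limit of the recursion \eqref{oo01}, so the trace data converge, and a continuous Lions--Magenes lifting then yields bulk convergence.
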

\begin{proof}
For fixed $\delta>0$, since $( \eta_0^\delta,q_0^\delta,v_0^\delta, \theta_0^\delta) $ are smooth and satisfy the third order compatibility conditions  \eqref{inc3k2app}, $ \theta_{0}^{\kappa,\delta}$ in the above can be constructed so that $\theta_{0}^{\kappa,\delta}\rightarrow \theta_0^\delta$ in $H^5(\Omega)$ as $\kappa\rightarrow0$ (indeed, one can replace $H^5(\Omega)$ by any higher order Sobolev space). The other convergences in \eqref{covremeq1k} then follow by the definition \eqref{in0defk1kk1}--\eqref{in0defk1kk4}, the standard elliptic theory and the usual properties of mollifiers.
\end{proof}

\section{Uniform estimates of nonlinear $(\kappa,\delta)$-approximate problem}\label{exist}

For each fixed $\kappa,\delta>0$, we will show in Section \ref{exist2} that there exist a time $T_0^{\kappa,\delta}>0$ and a unique solution $\(\eta^{(\kd)}, q^{(\kd)},v^{(\kd)},  \theta^{(\kd)}\)$  to \eqref{approximate} on $[0,T_0^{\kappa,\delta}]$, as recorded in Theorem \ref{nonlinearthm}. The purpose of this section is to derive the $(\kappa,\delta)$-independent estimates of these solutions, which will enable us to consider the limit of the sequence of solutions as $\kappa,\delta\rightarrow 0$ to establish the local well-posedness of \eqref{EFv} in Section \ref{limit}.

Define
\beq\label{fgdef}
\fg^{\kappa}(t):=
\sup_{[0,t]}  \E^\kappa+\int_0^{t} \D ,
\eeq
where
\begin{align}
\E^\kappa    :=& \sum_{j=0}^4\norm{ \(\dt^j q ,\dt^j v\) }_{4-j}^2 +\norm{\eta}_4^2  +\sum_{j=1}^4\norm{  \dt^j\eta }_{5-j}^2 +\abs{\bp^4\Lambda_\kappa\eta\cdot\nk}_0^2
 \nonumber
 \\&+\ns{\theta}_4 +\sum_{j=1}^3\norm{\dt^j\theta}_{5-j}^2 +\norm{\dt^4\theta}_0^2.
\end{align}
Note that by Propositions \ref{covrem} and \ref{covremk}, without loss of generality, we assume that
\beq
\E^\kappa[\eta^{(\kd)}, q^{(\kd)},v^{(\kd)},  \theta^{(\kd)}](0)\le \Mzkd.
\eeq
For simplifications of notations, we will suppress the $(\kappa,\delta)$-dependence of the solutions to \eqref{approximate}.

To begin with, it follows from Theorem \ref{nonlinearthm} that for $t\in [0,T]$ with $T\le T_0^{\kappa,\delta}$, by restricting $T_0^{\kappa,\delta}\le 1$ smaller if necessary,
\beq
\label{apriori1}
J^{\kappa}, q, \theta \ge \dfrac{c_0}{2}>0 \text{ in }\Omega
\eeq
and
\beq
\label{apriori2}-\nabla q\cdot N\geq \dfrac{c_0}{2}>0 \text{ on }\Sigma.
\eeq
Note that certain terms in $\fgk$ can be estimated directly. Indeed, define
  \beq\label{fdef}
\mathcal{F} := \sum_{j=0}^3\norm{  \(\dt^j q ,  \dt^j v\) }_{3-j}^2+ \sum_{j=0}^3\norm{ \dt^j\eta}_{4-j}^2+\sum_{j=0}^3\norm{ \dt^j\theta}_{4-j}^2.
\eeq
It follows directly from the fundamental theorem of calculus and the definition \eqref{fgdef} of $\fgk$ that
\beq\label{etaest}
\mathcal{F} (t)\ls \mathcal{F} (0)+\(\int_0^T  \sqrt{\E^\kappa+\D} \)^2 \leq \Mzkd+T\fgk(T).
\eeq
It should be noted that \eqref{apriori1} and \eqref{etaest} will be always used in the following, without mentioning explicitly for most of time. Denote
\beq
 \me(t):=P\(\sup_{[0,t]}\mathcal{F}\).
\eeq

\subsection{Preliminary estimates of $\dt\eta$, $\eta^{\kappa}$ and $\psi^{\kappa}$}

The following lemma shows that the time derivatives of the modified flow map $\eta$, the smoothing $\eta^{\kappa}$ defined by \eqref{etadef} and the modification term $\psi^{\kappa}$ defined by \eqref{etaaa} can be estimated in terms of  $\eta$ itself and the velocity $v$.

\begin{lemma}
\label{preest}
It holds that
\beq\label{tes1}
 \sum_{j=1}^4\norm{\dt^j\eta}_{5-j}^2+\norm{\eta^\kappa}_4^2+\sum_{j=1}^4\norm{\dt^j\eta^\kappa}_{5-j}^2+ \sum_{j=0}^3\norm{\dt^j\fk}_{4-j}^2  \leq P\(\norm{\eta}_4^2+\sum_{j=0}^3\norm{\dt^j v}_{4-j}^2\).
\eeq
\end{lemma}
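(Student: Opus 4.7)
The plan is to run a cascade induction. The modified flow equation $\dt\eta = v + \fk$ reduces bounds on $\dt^j\eta$ to bounds on $\dt^{j-1}v$ and $\dt^{j-1}\fk$, while $\eta^\kappa$ and $\fk$ are defined by time-differentiable elliptic Dirichlet problems whose data involve only $\eta$ and $v$ and their spatial derivatives. An induction on $j$, weaving these two observations together, will close the estimate.

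I would begin with $\eta^\kappa$. Since $\dt$ commutes with the spatial smoothing operators $\Pi_\kappa$ and $\Lambda_\kappa$, time-differentiating \eqref{etadef} shows that $\dt^j\eta^\kappa$ solves the analogous Poisson--Dirichlet problem with data built from $\dt^j\eta$. Standard elliptic regularity, together with the fact that $\Pi_\kappa$ and $\Lambda_\kappa$ are bounded on every Sobolev space, yields
\begin{equation*}
\norm{\eta^\kappa}_4 + \sum_{j=1}^4 \norm{\dt^j\eta^\kappa}_{5-j} \lesssim \norm{\eta}_4 + \sum_{j=1}^4 \norm{\dt^j\eta}_{5-j}.
\end{equation*}
Chain and product rule bounds then control $\dt^k \ak$ and $\dt^k \nk$ at the same order.

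Next, for $\fk$, use that it is harmonic in $\Omega$, so $\norm{\fk}_4 \lesssim \abs{\fk}_{7/2}$. Setting
\begin{equation*}
F^\kappa := \Delta_\ast\eta_i \ak_{i\alpha}\partial_\alpha \Lambda_\kappa^2 v - \Delta_\ast\Lambda_\kappa^2\eta_i \ak_{i\alpha}\partial_\alpha v,
\end{equation*}
the two-derivative gain of $\Delta_\ast^{-1}$ on zero-mean functions on $\mathbb{T}^2$ gives $\abs{\fk}_{7/2} \lesssim \abs{F^\kappa}_{3/2}$. Using trace (so that $\eta|_\Sigma, v|_\Sigma \in H^{7/2}(\Sigma)$) and $\ak|_\Sigma \in H^{5/2}(\Sigma) \hookrightarrow L^\infty$ on the 2D torus, Sobolev multiplication places $F^\kappa$ in $H^{3/2}(\Sigma)$ with norm controlled by a polynomial in $\norm{\eta}_4$ and $\norm{v}_4$. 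Hence $\norm{\fk}_4$ is dominated by the right-hand side of \eqref{tes1}, and then $\dt\eta = v + \fk$ yields $\norm{\dt\eta}_4$, which in turn feeds back into the elliptic bound above to control $\norm{\dt\eta^\kappa}_4$.

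Now I would iterate. Suppose \eqref{tes1} has been established up to time-derivative order $j_0\ge 1$, with $\norm{\dt^{j_0-1}\fk}_{5-j_0}$ bounded. Using Leibniz, $\dt^{j_0}\fk$ is harmonic in $\Omega$ with trace equal to $\Delta_\ast^{-1}\mathbb{P}$ applied to a sum of triple products of $\dt^i\eta$, $\dt^k\ak$, and $\dt^\ell(\Lambda_\kappa^2 v)$ (plus the symmetric piece) with $i+k+\ell=j_0$; each factor is already bounded at the appropriate order by the induction hypothesis and the elliptic bound for $\dt^k\eta^\kappa$. Sobolev multiplication on the 2D boundary combined with the $+2$ gain of $\Delta_\ast^{-1}$ then gives
\begin{equation*}
\norm{\dt^{j_0}\fk}_{4-j_0} \lesssim P\left(\norm{\eta}_4, \norm{v}_4, \ldots, \norm{\dt^{j_0}v}_{4-j_0}\right).
\end{equation*}
Then $\dt^{j_0+1}\eta = \dt^{j_0}v + \dt^{j_0}\fk$ yields the bound on $\norm{\dt^{j_0+1}\eta}_{4-j_0}$, and the elliptic estimate for \eqref{etadef} produces $\norm{\dt^{j_0+1}\eta^\kappa}_{4-j_0}$. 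This closes the induction at step $j_0+1$.

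The main obstacle is the boundary product estimate at the top order $j_0=3$: the trace of $\dt^3\fk$ sits in $H^{1/2}(\Sigma)$, and expanding $\dt^3 F^\kappa$ via Leibniz produces a worst-case term of the form $\Delta_\ast\eta \cdot \ak \cdot \partial_\alpha\Lambda_\kappa^2 \dt^3 v$, in which $\partial_\alpha \dt^3 v$ only barely sits in $H^{-1/2}(\Sigma)$ after the tangential derivative. Here the $+2$ gain of $\Delta_\ast^{-1}$ is essential, and one invokes the 2D boundary embedding $H^{3/2}(\mathbb{T}^2)\hookrightarrow L^\infty$ to treat $\Delta_\ast\eta$ as a bounded multiplier without losing regularity. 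With these bookkeeping details in hand, the induction is otherwise routine and \eqref{tes1} follows.
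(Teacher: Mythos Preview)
Your proposal is correct and follows essentially the same approach as the paper: elliptic regularity for the Dirichlet problems \eqref{etadef} and \eqref{etaaa} combined with the flow-map identity $\dt\eta = v + \fk$, cascaded through the time derivatives. The paper organizes the pieces sequentially (first all $\dt^j\eta^\kappa$ in terms of $\dt^j\eta$, then all $\dt^j\fk$ with explicit treatment of each worst $H^{-3/2}(\Sigma)$ factor at $j=3$, then the flow-map equation) rather than as an explicit induction on $j$, but the content is the same.
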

\begin{proof}
First, by the standard elliptic theory on \eqref{etadef}, the trace theory and the usual properties of the smoothing operators, we have
\begin{equation}\label{ttttt1}
\ns{\eta^{\kappa}}_4\ls \ns{\Pi_\kappa \eta}_4+\abs{\Lambda_{\kappa}^2\eta}_{7/2}^2\ls\ns{\eta}_4+\abs{\eta}_{7/2}^2\ls \norm{\eta}_4
\end{equation}
and that for $1\le j\le 4$,
\begin{equation}\label{ttttt12}
\norm{\dt^j\eta^\kappa}_{5-j}^2\ls \ns{\Pi_\kappa \dt^j \eta}_{5-j}+\abs{\Lambda_{\kappa}^2 \dt^j\eta}_{9/2-j}^2\ls \norm{\dt^j\eta }_{5-j}^2+\abs{  \dt^j\eta}_{9/2-j}^2\ls \norm{\dt^j\eta }_{5-j}^2.
\end{equation}
Note that by the identity \eqref{partialF}, \eqref{apriori1}, \eqref{ttttt1} and \eqref{ttttt12}, we obtain
\begin{equation}\label{ttttt111}
\norm{\a^{\kappa}}_{3}\le P\(\ns{\eta^{\kappa}}_4\)\le P\(\ns{\eta}_4\)
\end{equation}
and that for $1\le j\le 4$
\begin{equation}\label{ttttt1211}
\norm{\dt^j\a^{\kappa}}_{4-j}\le P\(\norm{\eta^\kappa}_4^2+\sum_{\ell=1}^j\norm{\dt^\ell\eta^\kappa}_{5-\ell}^2\)\le P\(\norm{\eta}_4^2+\sum_{\ell=1}^j\norm{\dt^\ell\eta}_{5-\ell}^2\).
\end{equation}

Next, by the standard elliptic regularity theory on \eqref{etaaa}, we deduce that for $j=0,1,2,3$,
\begin{align}
\norm{\dt^{j}\fk}_{4-j}&\ls \abs{\dt^{j}\fk}_{7/2-j}
\ls\abs{\dt^{j}\(\Delta_{*}\eta_{i}\a^{\kappa}_{i\alpha}\partial_{\alpha}{\Lambda_{\kappa}^2 v}-\Delta_{*}{\Lambda_{\kappa}^2\eta}_{i}\a^{\kappa}_{i\alpha}\partial_{\alpha} v\)}_{3/2-j} .
\end{align}
It needs to pay more attention when $j=2,3$, and we take $j=3$ for example. Note that $\al $ is from  $1$ to $2$. By the trace theorem, we have
\begin{align}
 \abs{\dt^3 \Delta_{*}\eta_{i} }_{-3/2}
 \ls   \abs{\dt^3  \eta }_{1/2} \ls   \norm{\dt^3 \eta }_{1},
\end{align}
\begin{align}
 \abs{ \dt^3\partial_{\alpha}{\Lambda_{\kappa}^2 v} }_{-3/2}
 \ls    \abs{\dt^3  v} _{-1/2}
  \ls    \abs{\dt^3  v}_{0}\ls    \norm{\dt^3  v }_{1}
\end{align}
and
\begin{align}
 \abs{  \dt^3\a^{\kappa}_{i\alpha}  }_{-3/2}
 \ls    \norm{  \dt^3\a^{\kappa}   }_{1}.
\end{align}
With the above in mind, we can deduce
\begin{align}\label{bbb}
\norm{\dt^{j}\fk}_{4-j} \leq P\(\norm{\eta}_4^2+\sum_{\ell=1}^j\norm{\dt^\ell\eta}_{5-\ell}^2+\sum_{\ell=0}^j\norm{\dt^\ell v}_{4-\ell}^2\)  .
\end{align}

Third, it follows from the first equation in \eqref{approximate}  that for $1\le j\le 4,$
\begin{equation}\label{eta1}
\norm{\dt^{j}\eta }_{5-j}^2 \le \norm{\dt^{j-1} v}_{5-j}^2+\norm{\dt^{j-1}\fk}_{5-j}^2.
\end{equation}
Combining this  and \eqref{bbb}, it is routine to conclude the lemma.
\end{proof}

From Lemma \ref{preest},
\beq
\norm{ \ak}_{3}^2+\sum_{j=1}^4\norm{\dt^j\ak}_{4-j}^2  \leq P\(\norm{\eta}_4^2+\sum_{j=0}^3\norm{\dt^j v}_{4-j}^2\)
\eeq
and
\beq
\sum_{j=0}^3\norm{\dt^j\ak}_{3-j}^2  \leq M_0+T\fgk(T).
\eeq

\subsection{Energy evolution estimates}

In this subsection, we will derive the tangential energy evolution estimates of the solution to \eqref{approximate}.

We begin with estimating for the highest order temporal derivatives. Recall that, unlike the case of the Dirichlet boundary condition for the temperature, here we need to use essentially the coupling structure of \eqref{approximate} to estimate $(\dt^4 q,\dt^4 v)$ and $\dt^4 \theta$ together.

\begin{proposition}\label{prop31}
For $t\in [0,T]$, it holds that
\begin{align}\label{tv1b}
 \norm{(\dt^4 q,\dt^4 v,\dt^4 \theta)(t) }_{0}^2+
 \int_0^T \D\leq \Mzkd+   T \pek .
\end{align}
\end{proposition}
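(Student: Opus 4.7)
My plan is to follow the strategy outlined around \eqref{intro1}--\eqref{intro5}: test the $\dt^4$-differentiated momentum and temperature equations with carefully matched multipliers so that the strong Euler--Fourier coupling cancels at the leading order, then close by integrating in time. Applying $\dt^4$ to the momentum equation in \eqref{approximate} and pairing with $\jk \dt^4 v$, Piola's identity together with $\dt^4 q|_\Sigma=0$ (which follows from $q=\bar p$ on $\Sigma$) yields
\begin{align*}
\tfrac12\dtt\int_\Omega \tfrac{\jk q}{R\theta}|\dt^4 v|^2 = \int_\Omega \jk \dt^4 q\,\divak\dt^4 v + {\sum}_\mathcal{R},
\end{align*}
with ${\sum}_\mathcal{R}$ denoting terms whose time integrals are controlled by $\Mzkd+T\pek$ via Lemma \ref{preest} and \eqref{etaest}. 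Using the continuity equation $\dt q = (q/\theta)\dt\theta - q\divak v$ differentiated three times in $t$, I decompose $\dt^4 q = (q/\theta)\dt^4\theta - q\,\divak\dt^3 v + {\sum}_\mathcal{R}$; the ``isentropic-like'' term $-\int \jk q\,\divak\dt^3 v\,\divak\dt^4 v$ is reorganized by $\divak\dt^4 v = \dt\divak\dt^3 v + {\sum}_\mathcal{R}$ into $-\tfrac12\dtt\int \jk q|\divak\dt^3 v|^2$, which is moved to the LHS as an auxiliary energy.

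For the temperature equation, the crucial step is to pair $\dt^4$ with $\jk\dt^4\theta/\theta$ rather than $\jk\dt^4\theta$: with this choice, $\dt^4(q\,\divak v)\cdot(\dt^4\theta/\theta)$ produces exactly $\int \jk(q/\theta)\dt^4\theta\,\divak\dt^4 v$, matching (with opposite sign) the coupling piece left over from the momentum step. Integration by parts of the Laplacian yields the dissipation $\mu\int \jk\theta^{-1}|\nak\dt^4\theta|^2$; the resulting surface integral is handled by differentiating the Neumann condition $\nak\theta\cdot\nk=0$ four times in $t$, giving $\nak\dt^4\theta\cdot\nk = -[\dt^4,\nk\cdot\nak]\theta$ on $\Sigma$, whose commutator only involves $\dt^{\le 3}\theta$ and time derivatives of $(\ak,\nk)$ controlled via Lemma \ref{preest} and $\mathcal{F}$. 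Trace theory combined with Young's inequality then absorbs $\epsilon\|\dt^4\theta\|_1^2$ into the dissipation, and the $\Xi^\delta$-source is handled analogously via $\dt^4\Xi^\delta\in L^\infty(0,\infty;(H^1(\Omega))^\ast)$ from Proposition \ref{covrem}.

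Summing the two identities, the coupling terms $\pm\int \jk(q/\theta)\dt^4\theta\,\divak\dt^4 v$ cancel exactly, leaving
\begin{align*}
\dtt\bigl\{\tfrac12\!\int\!\tfrac{\jk q}{R\theta}|\dt^4 v|^2 + \tfrac12\!\int\! \jk q|\divak\dt^3 v|^2 + \tfrac12\!\int\!\tfrac{c_v \jk q}{R\theta^2}|\dt^4\theta|^2\bigr\} + c\|\nak\dt^4\theta\|_0^2 \ls {\sum}_\mathcal{R}.
\end{align*}
Integrating on $[0,t]$ and combining $\sup_t\|\dt^4\theta\|_0^2$ with the $\|\nak\dt^4\theta\|_0^2$ dissipation via Poincar\'e delivers the stated $\int_0^T\D$ bound, while $\|\dt^4 q(t)\|_0^2$ is recovered pointwise by plugging the decomposition $\dt^4 q = (q/\theta)\dt^4\theta - q\,\divak\dt^3 v + {\sum}_\mathcal{R}$ into an $L^2$ estimate: the two leading $L^2$-norms are already part of the combined energy just bounded, and the commutator remainders depend only on $\dt^{\le 3}(q,v,\theta)$ and are hence controlled by $\mathcal{F}$.

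The main obstacle is the choice of multiplier for the temperature equation: the naive choice $\jk\dt^4\theta$ would leave a residual $\int \jk q(1-1/\theta)\dt^4\theta\,\divak\dt^4 v$ that cannot be absorbed since $\|\divak\dt^4 v\|_0$ is not part of $\E^\kappa$; only $\jk\dt^4\theta/\theta$ aligns the coefficients so that the Euler--Fourier coupling cancels cleanly at the leading order, thereby avoiding the fatal loss of derivatives that the Neumann boundary condition for $\theta$ would otherwise entail.
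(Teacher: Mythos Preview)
Your multiplier choice for the temperature equation (test with $\jk\dt^4\theta/\theta$) and the decomposition $\dt^4 q = (q/\theta)\dt^4\theta - q\,\divak\dt^3 v + {\sum}_\mathcal{R}$ organize the Euler--Fourier cancellation differently from the paper --- which instead tests the temperature equation with $\frac{\jk}{q}\dt^3\!\bigl(\frac{q}{\theta}\dt\theta\bigr)$ and obtains the combined energy $\frac{\jk}{q}\bigl|\dt^4 q - \dt^3(\frac{q}{\theta}\dt\theta)\bigr|^2$ directly --- but at the level of the coupling cancellation the two routes are equivalent reorganizations.

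The genuine gap is in your handling of the Laplacian boundary term. By commuting $\dt^4$ through $\Delta_\ak$ \emph{before} integrating by parts, you produce $\int_\Sigma(\jk/\theta)\,\dt^4\theta\,\nak\dt^4\theta\cdot\nk$ and then substitute $\nak\dt^4\theta\cdot\nk = -[\dt^4,\nk_i\ak_{il}\p_l]\theta$. The top-order piece of this commutator is $\dt^4(\nk_i\ak_{il})\p_l\theta$, which contains $\nabla\dt^4\eta^\kappa$; Lemma~\ref{preest} gives only $\dt^4\eta^\kappa\in H^1(\Omega)$, so $\dt^4\ak\in L^2(\Omega)$ has no trace on $\Sigma$ and your surface integral cannot be bounded $\kappa$-uniformly. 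The paper sidesteps this entirely by keeping the divergence form: write $\jk\Delta_\ak\theta = \p_j(\jk\ak_{ij}\ak_{il}\p_l\theta)$ via Piola \emph{first}, apply $\dt^4$, and only then integrate by parts. The boundary term becomes $\int_\Sigma N_j\,\dt^4(\jk\ak_{ij}\ak_{il}\p_l\theta)\cdot(\text{test})$, and since $N_j\ak_{ij}\ak_{il}\p_l\theta = \nak\theta\cdot\nk = 0$ on $\Sigma$ for all $t$, all of its time derivatives vanish as well --- no boundary contribution whatsoever. The $\dt^4\ak$ factor then appears only in the \emph{volume} commutator $\int_\Omega[\dt^4,\jk\ak_{ij}\ak_{il}]\p_l\theta\,\p_j\dt^4\theta$, which is bounded by $\me\sqrt{\E^\kappa}\sqrt{\D}$ (pairing $\dt^4\ak\in L^2$, $\nabla\theta\in L^\infty$, $\nabla\dt^4\theta\in L^2$) and absorbed by Cauchy's inequality into the dissipation.
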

\begin{proof}
Applying $\dt^4$ to the third equation in \eqref{approximate} and then taking the $L^2(\Omega)$ inner product with $ J^\kappa \dt^4v$, we obtain
\begin{align}\label{kes1}
 &\hal \dfrac{d}{dt}\int_\Omega  \frac{\jk q}{R\theta} \abs{\dt^4v}^2+\int_\Omega   J^\kappa   \nak \dt^4 q\cdot  \dt^4v
 \nonumber
\\&\quad
 = \hal  \int_\Omega \dt\(  \frac{\jk q}{R\theta} \) \abs{\dt^4v}^2-\int_\Omega \jk   \(\[ \dt^4,\frac{q}{R\theta}\] \dt v+\left[\dt^4,  \nak\right]  q\)\cdot  \dt^4v
  \nonumber
\\&\quad\le \me \E^\kappa.
\end{align}
Since $   q=\bar{p}$ on $\Sigma$, by integrating by parts over $\Omega$ and using the Piola identity \eqref{polia},  we deduce
\begin{align}\label{kes2}
\int_\Omega  \jk \nak\dt^4 q \cdot  \dt^4v =- \int_\Omega J^\kappa    \dt^4 q \divak\dt^4v .
\end{align}
On the other hand, applying $\dt^4$ to the  fourth equation in \eqref{approximate} and then taking the $L^2(\Omega)$ inner product with $\frac{\jk}{q}   \dt^3 \(\frac{  q}{ \theta}\dt\theta\)  $, we obtain
\begin{align}\label{kes11}
&\hal \dfrac{d}{dt}\int_\Omega   \frac{ c_v\jk  }{R q} \abs{\dt^3 \(\frac{  q}{ \theta}\dt\theta\) }^2 +\int_\Omega    \jk   \divak \dt^4v \dt^3 \(\frac{  q}{ \theta}\dt\theta\)
-   \int_\Omega  \frac{\mu\jk}{q}\dt^4\(\Delta_\ak  \theta\)\dt^3 \(\frac{  q}{ \theta}\dt\theta\)
\nonumber
\\&\quad=\hal  \int_\Omega \dt\(\frac{ c_v\jk  }{R q}\) \abs{\dt^3 \(\frac{  q}{ \theta}\dt\theta\) }^2+\int_\Omega \frac{\jk}{q}\(-\left[\dt^4, q\divak\right]v+\dt^4 \Xi^\delta \)  \dt^3 \(\frac{  q}{ \theta}\dt\theta\)
\nonumber
\\&\quad\le \me \(\E^\kappa+\norm{\dt^4\Xi^\delta}_{(H^1(\Omega))^\ast}\(\sqrt{\E^\kappa}+\sqrt{\D }\)\),
\end{align}
where $\dt^4\Xi^\delta\in  (H^1(\Omega))^\ast $ has been used crucially so that
\begin{align}
\int_\Omega \frac{\jk}{q} \dt^4 \Xi^\delta   \dt^3 \(\frac{  q}{ \theta}\dt\theta\)
&\le \me \norm{\dt^4\Xi^\delta}_{(H^1(\Omega))^\ast}\norm{\dt^3 \(\frac{  q}{ \theta}\dt\theta\)}_{1}\nonumber
\\&\le \me \norm{\dt^4\Xi^\delta}_{(H^1(\Omega))^\ast}\(\sqrt{\E^\kappa}+\sqrt{\D }\).
\end{align}
Since $ N_j\a^\kappa_{ij}\a^\kappa_{i\ell}\p_\ell \theta\equiv\nak\theta\cdot\nk=0$ on $\Sigma$, by using the Piola identity \eqref{polia} and  integrating by parts over $\Omega$ and Cauchy's inequality, we deduce
\begin{align}\label{kes12}
&
-   \int_\Omega  \frac{\mu\jk}{q}\dt^4\(\Delta_\ak  \theta\)\dt^3 \(\frac{  q}{ \theta}\dt\theta\)
\nonumber
\\& \quad= -   \int_\Omega \frac{\mu}{q}\dt^4( \jk \Delta_\ak  \theta)\dt^3 \(\frac{  q}{ \theta}\dt\theta\)+
\int_\Omega \frac{\mu}{q}\[\dt^4, \jk\] \Delta_\ak  \theta \dt^3 \(\frac{  q}{ \theta}\dt\theta\)\nonumber
\\& \quad \ge-     \int_\Omega \frac{\mu}{q}\dt^4\p_j(J^\kappa\a^\kappa_{ij}\a^\kappa_{i\ell} \p_\ell \theta)\dt^3 \(\frac{  q}{ \theta}\dt\theta\)- \me \E^\kappa
\nonumber
\\& \quad =   \int_\Omega \frac{\mu}{\theta} \dt^4(J^\kappa\a^\kappa_{ij}\a^\kappa_{i\ell} \p_\ell \theta)\p_j\dt^4\theta
+\int_\Omega \frac{\mu}{q}\dt^4(J^\kappa\a^\kappa_{ij}\a^\kappa_{i\ell} \p_\ell \theta)\[\p_j\dt^3 ,\frac{  q}{ \theta}\]\dt\theta \nonumber
\\&\qquad+\int_\Omega \p_j\(\frac{\mu}{q}\)\dt^4(J^\kappa\a^\kappa_{ij}\a^\kappa_{i\ell} \p_\ell \theta)\dt^3 \(\frac{  q}{ \theta}\dt\theta\) - \me \E^\kappa
\nonumber
\\&\quad \ge \int_\Omega\frac{ \mu  J^\kappa }{\theta}\a^\kappa_{ij}\a^\kappa_{i\ell} \p_\ell \dt^4 \theta \p_j\dt^4\theta+
   \int_\Omega  \frac{\mu}{\theta}\[ \dt^4,J^\kappa\a^\kappa_{ij}\a^\kappa_{i\ell}\] \p_\ell \theta \p_j\dt^4\theta-\me\sqrt{\E^\kappa+\D } \sqrt{\E^\kappa}\nonumber
   \\& \quad\ge\hal\int_\Omega\frac{ \mu  J^\kappa }{ \theta}\abs{\nak \dt^4   \theta}^2 - \me\E^\kappa.
\end{align}

We now combine the two remaining terms in \eqref{kes2} and \eqref{kes11} to have a crucial cancelation  due to the coupling. To this end, applying $\dt^4$ to the second equation in \eqref{approximate}, we have
\beq\label{ghhg1}
\dt^5 q-\dt^4\(\frac{q}{\theta}\partial_t\theta\)+ q\divak \dt^4 v +\left[\dt^4, q \divak \right] v =0.
\eeq
By using \eqref{ghhg1}, we deduce
\begin{align}\label{efaab}
& - \int_\Omega J^\kappa    \dt^4 q \divak\dt^4v+\int_\Omega    \jk   \divak \dt^4v \dt^3 \(\frac{  q}{ \theta}\dt\theta\)\nonumber
 \\&  \quad =   \int_\Omega  \frac{ J^\kappa   }{ q} \(\dt^4 q- \dt^3 \(\frac{  q}{ \theta}\dt\theta\) \) \(\dt^5 q- \dt^4 \(\frac{  q}{ \theta}\dt\theta\) +\left[\dt^4, q \divak \right] v \)
  \nonumber
  \\&  \quad \ge \hal\dtt  \int_\Omega \frac{ J^\kappa   }{ q}\abs{ \dt^4 q- \dt^3 \(\frac{  q}{ \theta}\dt\theta\) }^2- \hal  \int_\Omega \dt\(\frac{ J^\kappa   }{ q}\)\abs{ \dt^4 q- \dt^3 \(\frac{  q}{ \theta}\dt\theta\) }^2 - \me \E^\kappa
  \nonumber
  \\&  \quad \ge \hal\dtt  \int_\Omega \frac{ J^\kappa   }{ q}\abs{ \dt^4 q- \dt^3 \(\frac{  q}{ \theta}\dt\theta\) }^2 - \me \E^\kappa.
\end{align}

Consequently, by \eqref{kes1}--\eqref{kes11}, \eqref{kes12} and \eqref{efaab}, we obtain
\begin{align}\label{keses}
  &\hal \dfrac{d}{dt}\int_\Omega  \(\frac{\jk q}{R\theta} \abs{\dt^4v}^2+ \frac{ c_v  \jk }{R q} \abs{\dt^3 \(\frac{  q}{ \theta}\dt\theta\) }^2+\frac{   \jk}{ q}\abs{ \dt^4 q-\dt^3 \(\frac{  q}{ \theta}\dt\theta\)  }^2 \)
    \nonumber
  \\&  \quad+
\hal\int_\Omega   \frac{\mu J^\kappa}{\theta}  \abs{\nak \dt^4   \theta}^2
 \le  \me   \(\E^\kappa+\norm{\dt^4\Xi^\delta}_{(H^1(\Omega))^\ast}\(\sqrt{\E^\kappa}+\sqrt{\D }\)\).
\end{align}
Integrating \eqref{keses} in time, by Cauchy's inequality, we then conclude \eqref{tv1b}.
\end{proof}

Now we estimate for the highest order horizontal spatial derivatives, where the Taylor sign condition \eqref{apriori2} will be used.
We will apply equivalently $ \Delta_\ast^2$ to \eqref{approximate} so that we can employ the structure of $\Delta_\ast \fk$ on $\Sigma$. It requires to commutate $\Delta_\ast^2$ with each term of $\pa^\ak_i$.
It holds that
\begin{equation}
 \Delta_\ast^2 (\pa^\ak_if) =  \pa^\ak_i \Delta_\ast^2 f + \Delta_\ast^2 {\a^{\kappa}_{ij}} \pa_j f+\left[\Delta_\ast^2, {\a^{\kappa}_{ij}} ,\pa_j f\right].
\end{equation}
By the identity \eqref{partialF}, one has
\begin{align}
&\Delta_\ast^2 \a^\kappa_{ij} \pa_j f=- \Delta_*\p_\al(\a^\kappa_{i\ell}\p_\al\pa_\ell  \eta^{\kappa}_m  \a^\kappa_{mj})\pa_j f\nonumber
\\&\quad=-\a^\kappa_{i\ell}\pa_\ell \Delta_\ast^2\eta^{\kappa}_m\a^\kappa_{mj}\pa_j f
-\left[ \Delta_*\p_\al, \a^\kappa_{i\ell}\a^\kappa_{mj} \right]\p_\al \pa_\ell  \eta^{\kappa}_m \pa_j f\nonumber
\\&\quad=-\pa^\ak_i\( \Delta_\ast^2\eta^{\kappa}\cdot\nak f\)+\Delta_\ast^2\eta^{\kappa}\cdot\nak\( \pa^\ak_i f\)
-\left[ \Delta_*\p_\al, \a^\kappa_{i\ell}\a^\kappa_{mj}\right]\p_\al\pa_\ell \eta^{\kappa}_m\pa_j f.
\end{align}
It then holds that
\begin{equation}\label{commf}
 \Delta_\ast^2 (\pa^\ak_if) =  \pa^\ak_i\left(\Delta_\ast^2 f- \Delta_\ast^2\eta^{\kappa}\cdot\nak f\right)+ \mathcal{C}_i(f),
\end{equation}
where
\begin{equation}
        \label{commutator}
\mathcal{C}_i(f)=\Delta_\ast^2\eta^{\kappa}\cdot\nak( \pa^\ak_i f)
-\left[ \Delta_*\p_\al, \a^\kappa_{i\ell}\a^\kappa_{mj}\right]\p_\al\pa_\ell \eta^{\kappa}_m\pa_j f+\left[\Delta_\ast^2, {\a^{\kappa}_{ij}} ,\pa_j f\right].
\end{equation}
It was first observed by Alinhac \cite{A} that the highest order term of $\eta$ will be cancelled when one considers the good unknown $\Delta_\ast^2 f- \Delta_\ast^2\eta^{\kappa}\cdot\nak f$, which allows one to perform high order energy estimates. More precisely, we introduce the good unknowns:
\begin{equation}
\mathcal{V}=\Delta_\ast^2v-\Delta_\ast^2\eta^\kappa\cdot\nak v,\quad \mathcal{Q}=\Delta_\ast^2 q -\Delta_\ast^2\eta^\kappa\cdot\nak q.
\end{equation}
Applying $\Delta_\ast^2$ to the third and second equations in \eqref{approximate} yields
\beq\label{eqV2}
\begin{cases}
\frac{q}{R\theta}\dt\mathcal{V}  + \nak\mathcal{Q}=- \frac{q}{R\theta}\dt\left(\Delta_\ast^2\eta^\kappa\cdot\nak v\right)-\left[\Delta_\ast^2,\frac{q}{R\theta}\right]\dt v - \mathcal{C}(q)&\text{in }\Omega
\\
 \dt \mathcal{Q} +  q \divak \mathcal V=  -  \dt \(\Delta_\ast^2\eta^\kappa\cdot\nak q\)+\Delta_\ast^2\(\frac{ q}{\theta}\partial_t\theta\)  -\[\!\Delta_\ast^2,q\]\!\divak  v-q\mathcal C_i(v_i)&\text{in }\Omega
 \\
\mathcal Q=-\Delta_\ast^2\eta^\kappa_i\a^\kappa_{i3}\p_3 q=(-\nabla q\cdot N)\Delta_\ast^2\eta^\kappa\cdot\nk  &\text{on }\Sigma.
 \end{cases}
\eeq
It is crucial that we have decoupled \eqref{eqV2} out from the equation for $\Delta_\ast^2\theta$; indeed, it would be out of control when one applies $\Delta_\ast^2$ to the temperature equation since the estimate of the commutator $[\Delta_\ast^2,\Delta_\ak]$ needs the control of $\norm{\eta}_6$. The key point here is that the term $\Delta_\ast^2\(\frac{ q}{\theta}\partial_t\theta\) $ in the right hand side of the second equation in \eqref{eqV2} can be taken merely as the source term due to  the energy functional framework we are working with.

Now we state the estimates of such derivatives.
\begin{proposition}\label{ppro1}
For $t\in [0,T]$, it holds that
\begin{equation}
\label{t1est}
\norm{(\bp^4 q,\bp^4v)(t)}_{0}^2+\abs{\bp^4\Lambda_\kappa\eta\cdot\nk(t)}_0^2
\leq \Mzkd+T\pek.
\end{equation}
\end{proposition}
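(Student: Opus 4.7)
\medskip

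\noindent\textbf{Proof proposal for Proposition \ref{ppro1}.} The plan is to run the standard Alinhac good-unknown energy method on the system \eqref{eqV2} for $(\mathcal V,\mathcal Q)$, while carefully tracking the $\kappa$-dependent boundary terms and using the modifier $\psi^\kappa$ to cancel them. Specifically, I would test the first equation in \eqref{eqV2} with $J^\kappa\mathcal V$ to get
\begin{equation*}
\hal\dtt\!\int_\Omega\!\frac{J^\kappa q}{R\theta}\abs{\mathcal V}^2
+\int_\Omega J^\kappa\nak\mathcal Q\cdot\mathcal V
=\int_\Omega J^\kappa\mathcal V\cdot\(\text{RHS of \eqref{eqV2}}_1\)+{\sum}_{\mathcal R},
\end{equation*}
where ${\sum}_{\mathcal R}$ collects lower-order terms whose time integral is bounded by $M_0+T\,P(\fgk(T))$, using \eqref{apriori1}, \eqref{etaest} and Lemma \ref{preest} for every factor involving $\eta,\ak,\jk,\psi^\kappa$ and their time derivatives up to the regularity allowed by $\mathcal F$. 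The Piola identity turns the second integral into $-\int_\Sigma J^\kappa\mathcal Q\,\mathcal V\cdot\nk+\int_\Omega J^\kappa\mathcal Q\,\divak\mathcal V$.

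For the interior term I would use the continuity equation \eqref{eqV2}$_2$ and rewrite
\begin{equation*}
\int_\Omega J^\kappa\mathcal Q\,\divak\mathcal V
=-\int_\Omega\frac{J^\kappa}{q}\mathcal Q\(\dt\mathcal Q-\Delta_\ast^2\!\(\tfrac{q}{\theta}\dt\theta\)\)+{\sum}_{\mathcal R},
\end{equation*}
which reconstructs $\hal\dtt\int_\Omega\frac{J^\kappa}{q}\abs{\mathcal Q}^2$ plus a commutator duality pairing against $\Delta_\ast^2(\tfrac{q}{\theta}\dt\theta)$; because $\dt\theta\in H^4(\Omega)$ is built into $\E^\kappa$, this last source can be moved to the lower-order pile by integrating by parts in one horizontal derivative and Cauchy--Schwarz. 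All commutator terms $\mathcal C_i(f)$, $[\Delta_\ast^2,\frac{q}{R\theta}]\dt v$, $[\Delta_\ast^2,q]\divak v$ and $\dt(\Delta_\ast^2\eta^\kappa\cdot\nak v)$, $\dt(\Delta_\ast^2\eta^\kappa\cdot\nak q)$ are of order at most $4$ in space and hence absorbed by $\me\E^\kappa$.

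The main obstacle is the boundary integral $-\int_\Sigma J^\kappa\mathcal Q\,\mathcal V\cdot\nk$. Using $\mathcal Q|_\Sigma=(-\nabla q\cdot N)\Delta_\ast^2\eta^\kappa\cdot\nk$ and $\Delta_\ast^2\eta^\kappa|_\Sigma=\bp^4\Lambda_\kappa^2\eta$ (from \eqref{etadef}), one would like to reconstruct $\hal\dtt\int_\Sigma(-\nabla q\cdot N)J^\kappa\abs{\bp^4\Lambda_\kappa\eta\cdot\nk}^2$ via $\dt\nk=-\nak v\cdot\nk$ as in \eqref{intro4}, invoking the Taylor sign condition \eqref{apriori2}. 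However, on $\Sigma$ we only have $\dt\eta^\kappa=\Lambda_\kappa^2(v+\psi^\kappa)$ rather than $\dt\eta^\kappa=v$, so naively the two terms $\bp^4\Lambda_\kappa^2 v\cdot\nk$ and $\bp^4\Lambda_\kappa^2\eta\cdot\nak v\cdot\nk$ do not line up with $\bp^4v\cdot\nk$ and $\bp^4\Lambda_\kappa^2\eta\cdot\nak v\cdot\nk$ inside $\mathcal V\cdot\nk$. Moving the smoothing operator $\Lambda_\kappa^2$ off $\bp^4 v$ and onto the other factor via the self-adjointness of $\Lambda_\kappa$ introduces the two troublesome boundary integrals $\i_{1b}$ and $\i_{2b}$ referred to in the text; only their sum is $O(\kappa)$. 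The key point is that the boundary value of $\psi^\kappa$ is defined precisely so that, after using $\Delta_\ast^{-1}\mathbb P$ in \eqref{etaaa} and integrating by parts on $\mathbb T^2$, the contribution of $\Lambda_\kappa^2\psi^\kappa$ cancels $\i_{1b}+\i_{2b}$ modulo terms absorbed by $\me\E^\kappa$. After this cancellation one is left with the clean identity
\begin{equation*}
-\int_\Sigma J^\kappa\mathcal Q\,\mathcal V\cdot\nk
=\hal\dtt\!\int_\Sigma(-\nabla q\cdot N)J^\kappa\abs{\bp^4\Lambda_\kappa\eta\cdot\nk}^2+{\sum}_{\mathcal R}.
\end{equation*}
Summing these three identities, integrating in $t$, and invoking the Taylor sign condition \eqref{apriori2} to convert the boundary energy into $\abs{\bp^4\Lambda_\kappa\eta\cdot\nk}_0^2$ yields \eqref{t1est}. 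I expect the verification of the $\psi^\kappa$-cancellation to be the most technical part; everything else is standard commutator bookkeeping within the energy class $\fgk$.
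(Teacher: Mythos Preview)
Your proposal is correct and follows essentially the same route as the paper: test \eqref{eqV2}$_1$ with $J^\kappa\mathcal V$, use \eqref{eqV2}$_2$ to reconstruct $\hal\dtt\int\frac{J^\kappa}{q}|\mathcal Q|^2$, and handle the boundary term via the symmetry-plus-$\psi^\kappa$ cancellation of $\i_{1b}$ and $\i_{2b}$ exactly as in \cite{GW}. One small correction: the pairing $\int_\Omega\frac{J^\kappa}{q}\mathcal Q\,\Delta_\ast^2(\tfrac{q}{\theta}\dt\theta)$ should be estimated \emph{directly}---since $\dt\theta\in H^4$ gives $\Delta_\ast^2(\tfrac{q}{\theta}\dt\theta)\in L^2$---rather than by integrating by parts in a horizontal derivative, which would put a $\bar\partial$ on $\mathcal Q$ and produce an uncontrolled $\bar\partial^5 q$.
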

\begin{proof}
Taking the $L^2(\Omega)$ inner product of \eqref{eqV2} with $J^\kappa\mathcal V$ yields
\begin{align}\label{bp24}
&\hal\dfrac{d}{dt}\int_\Omega \frac{\jk q}{R\theta}\abs{\mathcal V}^2+\int_\Omega \jk \nak\mathcal Q\cdot\mathcal V\nonumber
\\&\quad=\hal\int_\Omega\! \dt\!\(\frac{\jk q}{R\theta}\)\! \abs{\mathcal V}^2-\int_\Omega\! \jk \!\(  \frac{q}{R\theta}\dt\left(\Delta_\ast^2\eta^\kappa\cdot\nak v\right)+\left[\Delta_\ast^2,\frac{q}{R\theta}\right]\dt v + \mathcal{C}(q)\)\!\cdot\!\mathcal V\nonumber
\\&\quad \le \me \E^\kappa.
\end{align}
By the integration by parts over $\Omega$ and using the rest equations in \eqref{eqV2}, we obtain
\begin{align}\label{bp2422}
&\int_\Omega \jk \nak\mathcal Q\cdot \mathcal V=\int_\Sigma \jk\mathcal Q   \mathcal V \cdot\nk-\int_\Omega \jk \mathcal Q \divak \mathcal V \nonumber
\\ & \quad=\underbrace{\int_\Sigma (- \nabla q\cdot N) \jk\Delta_\ast^2\eta^\kappa\cdot\nk\mathcal V\cdot\nk}_{\mathcal{I}}+\int_\Omega \frac{\jk}{q}  \mathcal Q  \dt \mathcal{Q}\nonumber
\\&\qquad +\int_\Omega \frac{\jk}{q}\mathcal Q\(   \dt \(\Delta_\ast^2\eta^\kappa\cdot\nak q\)-\Delta_\ast^2\(\frac{ q}{\theta}\partial_t\theta\) +\[\Delta_\ast^2,q\]\divak  v+q\mathcal C_i(v_i)\)\nonumber
\\ & \quad\ge \mathcal{I}+\hal\dtt\int_\Omega  \frac{\jk}{q}\abs{\mathcal Q}^2    -\me \E^\kappa .
\end{align}

The estimate of $\mathcal{I}$ is same as the one for the free-boundary incompressible ideal MHD considered by the authors in \cite{GW}, and for the completeness we restate the details here.
By the definition of $\mathcal{V}$ and recalling that $\eta^{\kappa}=\Lambda_\kappa^2\eta$ on $\Sigma$ and $v=\partial_t\eta-\fk$, we have
\begin{align}\label{qvqv1}
 \mathcal{I}&=-\int_{\Sigma} \nabla q\cdot N  \jk\Delta_\ast^2 \Lambda_\kappa^2\eta\cdot\nk  \(\Delta_\ast^2 v - \Delta_\ast^2 \Lambda_\kappa^2\eta \cdot\nak v \)\cdot\nk\nonumber
  \\& =-\int_{\Sigma} \nabla q\cdot N \jk\Delta_\ast^2 \Lambda_\kappa^2\eta\cdot\nk \left(\Delta_\ast^2 \partial_t\eta  - \Delta_\ast^2 \psi^\kappa - \Delta_\ast^2 \Lambda_\kappa^2\eta \cdot\nak v \right)\cdot\nk .
\end{align}
Note that
\begin{align}
&-\int_{\Sigma} \nabla q\cdot N\jk\Delta_\ast^2 \Lambda_\kappa^2\eta\cdot\nk \Delta_\ast^2 \partial_t\eta\cdot\nk
=-\int_{\Sigma}\Delta_\ast^2 \Lambda_\kappa \eta\cdot \Lambda_\kappa\( \nabla q\cdot N  \jk\nk\Delta_\ast^2 \partial_t\eta\cdot\nk\) \nonumber \\
  &\quad
=-\int_{\Sigma}\nabla q\cdot N \jk\Delta_\ast^2 \Lambda_\kappa \eta\cdot\nk \Delta_\ast^2 \Lambda_\kappa \partial_t\eta\cdot\nk- \int_{\Sigma}\Delta_\ast^2 \Lambda_\kappa \eta\cdot\left[\Lambda_\kappa,  \nabla q\cdot N \jk \nk \nk\right]\cdot \Delta_\ast^2 \partial_t\eta \nonumber \\
&\quad
=\hal \frac{d}{dt}\int_{\Sigma}(-\nabla q\cdot N )\jk\abs{\Delta_\ast^2 \Lambda_\kappa \eta\cdot\nk }^2 +\hal \int_{\Sigma}\dt (\nabla q\cdot N  \jk) \abs{\Delta_\ast^2 \Lambda_\kappa \eta\cdot\nk  }^2\nonumber
\\&\qquad +\int_{\Sigma}\nabla q\cdot N \jk\Delta_\ast^2 \Lambda_\kappa \eta\cdot\nk \Delta_\ast^2 \Lambda_\kappa \eta\cdot \partial_t\nk
 - \int_{\Sigma}\Delta_\ast^2 \Lambda_\kappa \eta\cdot\left[\Lambda_\kappa,  \nabla q\cdot N \jk \nk \nk\right]\cdot \Delta_\ast^2 \partial_t\eta  .
\end{align}
It is direct to have
\begin{equation}
\label{i0}
 \hal \int_{\Sigma}\dt(\nabla q\cdot N \jk) \abs{\Delta_\ast^2 \Lambda_\kappa \eta\cdot\nk  }^2 \ge-\me\abs{\Delta_\ast^2 \Lambda_\kappa \eta\cdot\nk }_0^2 \ge -\me \E^\kappa.
\end{equation}
By \eqref{test3}, \eqref{es1-1/2} and the trace theory, we obtain
\begin{align}\label{qvqv4}
\nonumber
&  \int_{\Sigma}\Delta_\ast^2 \Lambda_\kappa \eta\cdot\left[\Lambda_\kappa,  \nabla q\cdot N \jk \nk \nk\right]\cdot \Delta_\ast^2 \partial_t\eta
\\&\quad\leq \abs{\Delta_\ast^2 \Lambda_\kappa \eta  }_{-1/2}\abs{ \left[\Lambda_\kappa,  \nabla q\cdot N \jk \nk \nk\right]\cdot \Delta_\ast^2 \partial_t\eta}_{1/2}\nonumber
\\  &\quad\ls \abs{ \eta  }_{7/2}\norm{ \partial_3 q\jk\nk \nk}_{C^1(\Sigma) }\abs{ \partial_t\eta }_{7/2}
\nonumber
\\  &\quad\le \me \norm{\eta}_{4}\norm{\partial_t\eta}_{4} \leq \me \E^\kappa.
\end{align}
Therefore, by \eqref{qvqv1}--\eqref{qvqv4}, we deduce
\begin{align}
\label{gg3}
\mathcal{I}
\ge &\hal \frac{d}{dt}\int_{\Sigma}(-\nabla q\cdot N )\jk\abs{\Delta_\ast^2 \Lambda_\kappa \eta \cdot\nk }^2
  +\underbrace{ \int_{\Sigma}\nabla q\cdot N \jk\Delta_\ast^2 \Lambda_\kappa \eta\cdot\nk \Delta_\ast^2 \Lambda_\kappa \eta\cdot \partial_t\nk }_{\mathcal{I}_1}\nonumber
\\&  +\underbrace{\int_{\Sigma}  \nabla q\cdot N\jk   \Delta_\ast^2 \Lambda_\kappa^2\eta\cdot\nk  \Delta_\ast^2 \Lambda_\kappa^2\eta \cdot\nak v \cdot\nk  }_{\mathcal{I}_2}\nonumber
\\&
   +\underbrace{\int_{\Sigma}  \nabla q\cdot N\jk   \Delta_\ast^2 \Lambda_\kappa^2\eta\cdot\nk  \Delta_\ast^2 \psi^\kappa \cdot\nk }_{\mathcal{I}_3}-\me \E^\kappa.
\end{align}

For $\i_1$, by the identity \eqref{partialF} and again that $\eta^{\kappa}=\Lambda_\kappa^2\eta$ on $\Sigma$ and $v=\partial_t\eta-\fk$, we have
\begin{align}
\nonumber \i_1&=- \underbrace{\int_{\Sigma}\nabla q\cdot N \jk\Delta_\ast^2 \Lambda_\kappa \eta\cdot\nk  \Delta_\ast^2  \Lambda_\kappa \eta_i   \a^{\kappa}_{i3}\pa_3\dt \eta^\kappa\cdot\nk}_{\i_{1a}}\nonumber
\\&\quad-\underbrace{\int_{\Sigma}\nabla q\cdot N \jk\Delta_\ast^2 \Lambda_\kappa \eta\cdot\nk   \Delta_\ast^2  \Lambda_\kappa \eta_i  \a^{\kappa}_{i\alpha}\pa_\alpha \Lambda_{\kappa}^2 v\cdot\nk }_{\i_{1b}}\nonumber
\\&\quad-\underbrace{\int_{\Sigma}\nabla q\cdot N \jk\Delta_\ast^2 \Lambda_\kappa \eta\cdot\nk  \Delta_\ast^2  \Lambda_\kappa \eta_i  \a^{\kappa}_{i\alpha}\pa_\alpha \Lambda_{\kappa}^2 \psi^\kappa\cdot\nk }_{\i_{1c}}.\label{tmp1}
\end{align}
It is direct to have
\begin{equation}
\label{1a}
\i_{1a}\le\me\abs{\Delta_\ast^2 \Lambda_\kappa \eta\cdot\nk }_0^2 \le \me \E^\kappa .
\end{equation}
To estimate $\i_{1c}$, the difficulty is that we do not have an $\kappa$-independent control of $\abs{\Delta_\ast^2  \Lambda_\kappa \eta }_0$, and the key point is to observe that the modification term $\fk\rightarrow 0$ as $\kappa\rightarrow 0$ and indeed we have a more quantitative estimate.
To this end, by Sobolev's embeddings, the elliptic estimate and \eqref{lossew}, we deduce
 \begin{align}\label{lwuqing}
\norm{\bar\partial \fk}_{L^{\infty}(\Sigma) }&\ls\abs{\bar\partial \fk}_{3/2}\ls\abs{\Delta_{*}\eta_{i}\a^{\kappa}_{i\alpha}\partial_{\alpha}{\Lambda_{\kappa}^2 v}-\Delta_{*}{\Lambda_{\kappa}^2\eta}_{i}\a^{\kappa}_{i\alpha}\partial_{\alpha} v}_{1/2}\nonumber
\\& \ls \me\(\abs{\Delta_*\eta-\Lambda_{\kappa}^2\Delta_*\eta}_{1/2}+  \abs{\partial_{\alpha}v-\Lambda_{\kappa}^2 \partial_{\alpha}v}_{1/2}\) \le \sqrt\kappa \me.
 \end{align}
Then by \eqref{lwuqing} and \eqref{loss}, we obtain
\begin{align}\label{22c}
\i_{1c} \le \me\abs{ \Delta_\ast^2 \Lambda_\kappa \eta\cdot\nk }_0\abs{\Delta_\ast^2 \Lambda_\kappa \eta}_0\norm{\pa_\alpha \Lambda_{\kappa}^2 \psi^\kappa}_{L^{\infty}(\Sigma) } \le \me \sqrt{\E^\kappa} \dfrac{1}{\sqrt{\kappa}}\abs{\eta}_{7/2}\sqrt{\kappa} \le \me \E^\kappa.
\end{align}
Note that the term $\i_{1b}$ is out of control by an $\kappa$-independent bound alone.

For $\i_2$,  we write it as
\begin{align}
\nonumber
\i_2=&\underbrace{\int_{\Sigma}  \nabla q\cdot N\jk   \Delta_\ast^2 \Lambda_\kappa^2\eta\cdot\nk    \Delta_\ast^2\Lambda_\kappa^2\eta_{i} \a^{\kappa}_{i3}\partial_3 v\cdot\nk    }_{\i_{2a}}\\&+\underbrace{\int_{\Sigma}  \nabla q\cdot N\jk   \Delta_\ast^2 \Lambda_\kappa^2\eta\cdot\nk   \Delta_\ast^2\Lambda_\kappa^2\eta_{i} \a^{\kappa}_{i\al}\partial_\al v\cdot\nk }_{\i_{2b}}.\label{ttttt}
\end{align}
By \eqref{test3} and \eqref{es1-1/2}, we have
\begin{align}
\label{ennennene}
\abs{\Delta_\ast^2 \Lambda_\kappa^2\eta\cdot\nk}_0^2
&=\abs{ \Lambda_\kappa\( \Delta_\ast^2 \Lambda_\kappa \eta\cdot\nk \)-\[ \Lambda_\kappa,\nk\]\cdot\Delta_\ast^2 \Lambda_\kappa \eta }_0^2\nonumber
\\&\ls\abs{\Delta_\ast^2 \Lambda_\kappa\eta\cdot\nk}_0^2+  \norm{\a^{\kappa}}_{C^1(\Sigma) }^2\abs{\eta}_3^2\leq \me \E^\kappa.
\end{align}
Then by \eqref{ennennene}, we obtain
\begin{equation}\label{3b}
\abs{\i_{2a}}\le \me\abs{\Delta_\ast^2 \Lambda_\kappa^2 \eta\cdot\nk }_0^2
 \leq \me \E^\kappa.
\end{equation}
Note that the term $\i_{2b}$ is also out of control by an $\kappa$-independent bound alone.

It should be remarked that $\mathcal{I}_{1b}$ and $\mathcal{I}_{2b}$ are cancelled out in the limit $\kappa\rightarrow0$, however, it is certainly not the case when $\kappa>0$. The key point here is to use the term $\i_3$, by the definition of the modification term $\fk$, to kill out both $\i_{1b}$ and $\i_{2b}$; this is exactly the reason that we have introduced $\fk$ in \eqref{approximate}. By the boundary condition in \eqref{etaaa}, we deduce
\begin{align}\label{tmp2}
\i_3&=\int_{\Sigma}  \nabla q\cdot N\jk   \Delta_\ast^2 \Lambda_\kappa^2\eta\cdot\nk    \Delta_{*}\left( \Delta_{*}\eta_i \a^\kappa_{i\alpha}\pa_\alpha{\Lambda_{\kappa}^2 v }-\Delta_{*}\Lambda_{\kappa}^2\eta_i \a^\kappa_{i\alpha}\pa_\alpha{ v }\right)\cdot\nk\nonumber
\\&=\underbrace{\int_{\Sigma}  \nabla q\cdot N\jk   \Delta_\ast^2 \Lambda_\kappa^2\eta\cdot\nk      \Delta_{*}^2\eta_i \a^\kappa_{i\alpha}\pa_\alpha{\Lambda_{\kappa}^2 v }\cdot\nk}_{\i_{3a}}\nonumber
\\&\quad-\underbrace{\int_{\Sigma}  \nabla q\cdot N\jk   \Delta_\ast^2 \Lambda_\kappa^2\eta\cdot\nk
  \Delta_{*}^2\Lambda_{\kappa}^2\eta_i \a^\kappa_{i\alpha}\pa_\alpha{ v } \cdot\nk}_{\i_{2b}}\nonumber
\\&\quad+\underbrace{\int_{\Sigma}  \nabla q\cdot N\jk   \Delta_\ast^2 \Lambda_\kappa^2\eta\cdot\nk \left( \left[\Delta_{*}, \a^\kappa_{i\alpha}\pa_\alpha{\Lambda_{\kappa}^2 v }\right]\Delta_{*}\eta_i -\left[\Delta_{*},\a^\kappa_{i\alpha}\pa_\alpha{ v }\right] \Delta_{*}\Lambda_{\kappa}^2\eta_i \right)\cdot\nk   }_{\i_{3b}}.
\end{align}
By using \eqref{ennennene} and \eqref{es1-1/2} again, we have
\begin{align}
\label{1ab}
\abs{\i_{3b}} &\le \me\abs{ \Delta_\ast^2 \Lambda_\kappa^2\eta\cdot\nk }_0\abs{ \left[\Delta_{*}, \a^\kappa_{i\alpha}\pa_\alpha{\Lambda_{\kappa}^2 v }\right]\Delta_{*}\eta_i -\left[\Delta_{*},\a^\kappa_{i\alpha}\pa_\alpha{ v }\right] \Delta_{*}\Lambda_{\kappa}^2\eta_i  }_0\nonumber
\\ &\le \me\sqrt{\E^\kappa}\(\norm{\a^\kappa_{i\alpha}\pa_\alpha{\Lambda_{\kappa}^2 v }}_{C^1(\Sigma)}\abs{ \eta_i  }_3+\norm{\a^\kappa_{i\alpha}\pa_\alpha{ v }}_{C^1(\Sigma)}\abs{\Lambda_{\kappa}^2\eta_i  }_3\)\nonumber
\\
&\le\me\sqrt{\E^\kappa}\norm{\eta}_4\le  \me \E^\kappa.
\end{align}
We rewrite $\i_{3a}$ as
\begin{align}
\nonumber \i_{3a}& =\int_{\Sigma} \Delta_\ast^2 \Lambda_\kappa \eta\cdot \Lambda_\kappa\(\nabla q\cdot N\jk    \nk      \Delta_{*}^2\eta_i \a^\kappa_{i\alpha}\pa_\alpha{\Lambda_{\kappa}^2 v }\cdot\nk\)
\\\nonumber& =\underbrace{\int_{\Sigma}\nabla q\cdot N \jk\Delta_\ast^2 \Lambda_\kappa \eta\cdot\nk   \Delta_\ast^2  \Lambda_\kappa \eta_i  \a^{\kappa}_{i\alpha}\pa_\alpha \Lambda_{\kappa}^2 v\cdot\nk }_{ \i_{1b}}
\\ &\quad+\underbrace{\int_{\Sigma} \Delta_\ast^2 \Lambda_\kappa \eta\cdot \[\Lambda_\kappa,\nabla q\cdot N\jk    \nk     \a^\kappa_{i\alpha}\pa_\alpha{\Lambda_{\kappa}^2 v }\cdot\nk\] \Delta_{*}^2\eta_i  }_{\i_{3aa}}.\label{temp3}
\end{align}
By arguing similarly as \eqref{qvqv4}, we have
\begin{equation}
\label{pw}
\abs{\i_{3aa}}\le \norm{\eta}_{4}\norm{\partial_3 q\jk\a^\kappa_{j3} \a^\kappa_{m\alpha}\pa_\alpha{\Lambda_{\kappa}^2 v_i}\a^{\kappa}_{i3}}_3\norm{\eta}_{4}\leq \me \E^\kappa.
\end{equation}

Consequently, by combining \eqref{tmp1},   \eqref{ttttt}, \eqref{tmp2} and \eqref{temp3},  and using the estimates \eqref{1a}, \eqref{22c}, \eqref{3b}, \eqref{1ab} and \eqref{pw}, we deduce
\begin{equation}\label{ggiip}
\abs{\i_1+\i_2+\i_3}=\abs{\i_{1a}+\i_{1c}+\i_{2a}+\i_{3aa}+\i_{3b}}\le \me \E^\kappa.
\end{equation}
Hence, by \eqref{bp24}, \eqref{bp2422}, \eqref{gg3} and \eqref{ggiip}, we have
\begin{align}\label{bp243366}
 \hal\dfrac{d}{dt}\(\int_\Omega \jk \(\frac{ q}{R\theta}\abs{\mathcal V}^2+\frac{1}{q}\abs{\mathcal Q}^2 \)+\int_{\Sigma}(-\nabla q\cdot N )\jk\abs{\Delta_\ast^2 \Lambda_\kappa \eta \cdot\nk }^2\) \leq \me \E^\kappa.
\end{align}
Integrating \eqref{bp243366} in time, by \eqref{apriori2} and the definitions of $\mathcal Q$ and $\mathcal V$,  we conclude \eqref{t1est}.
\end{proof}

\subsection{Elliptic estimates}\label{sec43}
With the tangential estimates in the previous subsection in hand, we now use the structure of the equations in \eqref{approximate} to deduce the rest of the estimates.

We begin with the estimates of the pressure $q$.
\begin{proposition}\label{f1b}
For $t\in [0,T]$, it holds that
\begin{align}\label{henergy}
\sum_{j=0}^3\norm{ \dt^j q(t) }_{4-j}^2\leq \Mzkd+ T \pek.
\end{align}
\end{proposition}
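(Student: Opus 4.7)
The plan is to view $q$ and its time derivatives as solutions of elliptic Dirichlet problems derived from \eqref{approximate}. Taking $\divak$ of the momentum equation yields
$$\Delta_\ak q = -\divak\(\frac{q}{R\theta}\dt v\)\quad\text{in }\Omega,\qquad q = \bar p \quad\text{on }\Sigma,$$
so $q$ satisfies a second-order elliptic boundary value problem with Dirichlet data; the strict ellipticity follows from \eqref{apriori1} and the control of $\norm{\eta^\kappa}_4$ via Lemma \ref{preest}.

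For $j = 0, 1, 2$, applying $\dt^j$ gives
$$\Delta_\ak(\dt^j q) = -\dt^j\divak\(\frac{q}{R\theta}\dt v\) - [\dt^j,\Delta_\ak]q\quad\text{in }\Omega,\qquad \dt^j q = \dt^j\bar p \quad\text{on }\Sigma,$$
with the boundary datum vanishing for $j \ge 1$. Standard elliptic regularity on $\Omega = \mathbb{T}^2\times(0,1)$ yields
$$\norm{\dt^j q}_{4-j}^2 \le \me\(\norm{\Delta_\ak(\dt^j q)}_{2-j}^2 + \abs{\dt^j \bar p}_{7/2-j}^2 + \norm{\dt^j q}_0^2\).$$
The factors in $\dt^j\divak(\frac{q}{R\theta}\dt v)$ and in $[\dt^j,\Delta_\ak]q$ are controlled by Sobolev product estimates: the $L^\infty$-in-time norms of $\ak$ and of $\dt^\ell\ak$ come from Lemma \ref{preest}; the top-order-in-time factors such as $\dt^{j+1}v \in H^{3-j}$ are bounded by $\E^\kappa$; and the remaining lower-order-in-time factors sit inside $\mathcal{F}$, which by \eqref{etaest} is bounded by $\Mzkd + T\pek$. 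The term $\norm{\dt^j q}_0$ is directly in $\mathcal{F}$.

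For $j = 3$, the elliptic route would demand an $H^{-1}$ right-hand side, which is awkward, so I would apply $\dt^3$ directly to the momentum equation to obtain
$$\nak(\dt^3 q) = -\dt^3\(\frac{q}{R\theta}\dt v\) - [\dt^3,\nak]q.$$
Since $\ak^{-1} = (\nabla\eta^\kappa)^T$, Lemma \ref{preest} yields $\norm{\nabla(\dt^3 q)}_0 \ls \me\,\norm{\nak(\dt^3 q)}_0$, and since $\dt^3 q = 0$ on $\Sigma$ (because $\dt^3\bar p = 0$), Poincar\'e's inequality gives $\norm{\dt^3 q}_1^2 \ls \me\,\norm{\nak(\dt^3 q)}_0^2$. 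The right-hand side is then estimated as above, the top-order factor being $\dt^4 v \in L^\infty(0,T;L^2(\Omega))$ via $\E^\kappa$ (and Proposition \ref{prop31}).

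The main obstacle will be the bookkeeping of the commutators $[\dt^j,\Delta_\ak]q$ and of $\dt^j\divak(\cdot)$ against the limited regularity of $\dt^\ell\ak$. By Lemma \ref{preest}, $\dt^\ell\ak$ only lies in $L^\infty(0,T;H^{4-\ell}(\Omega))$ for $1 \le \ell \le 4$, so each multilinear product must be split so that at most one factor invokes the full $\E^\kappa$ norm while all others are absorbed into $\mathcal{F}$, hence into $\Mzkd + T\pek$. This multilinear Sobolev bookkeeping is routine on $\Omega = \mathbb{T}^2\times(0,1)$, but slightly delicate for $j = 2, 3$ where several derivatives land on $\ak$ simultaneously.
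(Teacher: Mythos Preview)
Your $j=3$ argument via the momentum equation matches the paper's \eqref{fe1}. The gap is in $j=0,1,2$. Your elliptic equation $\Delta_\ak(\dt^j q)=-\dt^j\divak\!\big(\frac{q}{R\theta}\dt v\big)-[\dt^j,\Delta_\ak]q$ is correct, but its right-hand side in $H^{2-j}$ carries the top-order term $\norm{\dt^{j+1}v}_{3-j}$. As you yourself note, this is only in $\E^\kappa$; it is \emph{not} in $\mathcal{F}$ (which contains $\norm{\dt^{j+1}v}_{2-j}$ at one order less) and it is \emph{not} among the quantities already bounded by Proposition~\ref{prop31}. Hence your estimate yields only $\norm{\dt^j q}_{4-j}^2\le \me\,\E^\kappa$, which is $\le P(\fgk(T))\,\fgk(T)$ with no small prefactor---this cannot be absorbed into $M_0+T\pek$. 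Worse, since Proposition~\ref{f1} bounds $\norm{\dt^{j+1}v}_{3-j}$ precisely by $\norm{\dt^j q}_{4-j}$ (see \eqref{ves11}), your scheme is circular.

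The paper breaks this circularity by not using the raw divergence of the momentum equation, but by first eliminating $\divak\dt v$ via the continuity equation. Taking $\dt$ of the second equation in \eqref{approximate} and combining with $\divak(R\theta\,\cdot\,)$ applied to the third yields the acoustic wave equation \eqref{hqeq}:
\[
\dt^2 q-\divak(R\theta\,\nak q)=\dt\!\Big(\frac{q}{\theta}\dt\theta\Big)+\nak q\cdot\dt v-[\dt,q\,\divak]v.
\]
Applying $\dt^j$ then gives an elliptic problem for $\dt^j q$ whose source involves $\dt^{j+2}q$ and $\dt^{j+2}\theta$ at the \emph{lower} regularity $H^{2-j}$, together with terms controlled by $\mathcal{F}$. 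A downward induction in $j$ reduces everything to $\norm{\dt^3 q}_1$, $\norm{\dt^4 q}_0$, $\norm{\dt^4\theta}_0$, all of which \emph{are} already bounded by $M_0+T\pek$ via \eqref{fe1} and Proposition~\ref{prop31}. That substitution---trading $\divak\dt^{j+1}v$ for $\dt^{j+2}q$ and $\dt^{j+2}\theta$---is the missing idea in your argument.
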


\begin{proof}
It follows from the third equation in \eqref{approximate} that
\begin{align}\label{fe1}
 \ns{\nabla\dt^3 q}_0&=\ns{ \dt^3\((\nabla\eta^\kappa)^T\nak q \)}_0=\ns{\dt^3\( (\nabla\eta^\kappa)^T\frac{  q}{R\theta}\dt  v\)}_0
 \nonumber\\&=\ns{ (\nabla\eta^\kappa)^T\frac{  q}{R\theta}\dt^4  v+\[\dt^3, (\nabla\eta^\kappa)^T\frac{  q}{R\theta}\]\dt  v}_0
 \le \me\(\ns{  \dt^4 v}_0+\MCF\) .
 \end{align}

 Now, we use the elliptic estimates to estimate $\dt^j q$, $j=0,1,2$. For this, we apply $\Div_\ak\(R\theta \cdot\)$ to the third equation in \eqref{approximate} to have
\beq\label{eqrieo}
 \divak \(q    \dt v \)+\divak\( R\theta \nak q\)=0.
\eeq
On the other hand, taking the time derivative of the second equation in \eqref{approximate}, we obtain
\beq \label{eqrieo2}
\dt^2 q-\partial_t \(\frac{q}{\theta}\partial_t \theta\) +   q\divak \dt v +\[\dt,q \divak\]v=0.
\eeq
Hence, \eqref{eqrieo} and \eqref{eqrieo2} imply the following acoustic elastic wave equation:
\beq\label{hqeq}
  \dt^2 q -\divak\( R\theta \nak q\)=\partial_t \(\frac{q}{\theta}\partial_t \theta\)+\nak q\cdot\dt v-\[\dt,q\divak\]v .
\eeq
We apply $\dt^j,\ j=0,1,2,$ to \eqref{hqeq} to find
\begin{equation}\label{eqforqh1}
-\divak\( R\theta \nak \dt^j q\)=-  \dt^{j+2} q+ \frac{q}{ \theta }\dt^{j+2}\theta+\mathfrak{f}^{\kappa,j},
\end{equation}
where
\begin{align}
\mathfrak{f}^{\kappa,j}:=&\[\dt^j,\divak\] \(R\theta \nak  q\)+\divak\( \[\dt^j,R\theta \nak\]  q\)\nonumber
\\&+\[\dt^{j+1}, \frac{q}{\theta}\]\partial_t \theta +\dt^j\( \nak q\cdot\dt v-\[\dt,q\divak\]v\) .
\end{align}
Applying the standard elliptic estimates to \eqref{eqforqh1} with $  q=  \bar{p}$ on $\Sigma $ yields
\begin{align}
        \label{3q}
    \ns{  \nabla \dt^j q}_{3-j} &\le   P\(\ns{ \eta^\kappa}_4,\ns{\theta}_3\)  \ns{-  \dt^{j+2} q+ \frac{q}{ \theta }\dt^{j+2}\theta+\mathfrak{f}^{\kappa,j}}_{2-j}
 \nonumber\\&\le   \me\( \ns{ \dt^{j+2}q}_{2-j}+\ns{ \dt^{j+2}\theta}_{2-j}+\MCF \) .
\end{align}
Then a simple induction argument on \eqref{3q} implies
\begin{align}\label{3q3}
 \sum_{j=0}^2\ns{  \dt^j q}_{4-j}  \le \me\(\sum_{j=3}^4\ns{ \dt^{j}q}_{4-j}+ \ns{ \dt^{4}\theta}_{0}+\MCF\) .
 \end{align}
We thus conclude the proposition by \eqref{fe1}, \eqref{3q3} and \eqref{tv1b}.
\end{proof}

Next, we derive the estimates of the velocity $v$.
\begin{proposition}\label{f1}
For $t\in [0,T]$, it holds that
\beq\label{henergyb}
\sum_{j=0}^3\norm{ \dt^j v(t) }_{4-j}^2\leq \Mzkd+ T \pek.
\eeq
\end{proposition}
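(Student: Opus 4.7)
The proof divides naturally by the index $j$.

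For $j=1,2,3$, I would use the momentum equation in \eqref{approximate}, which rewrites as $\dt v = -\tfrac{R\theta}{q}\nak q$, so that $\dt^j v = \dt^{j-1}\bigl(-\tfrac{R\theta}{q}\nak q\bigr)$. Expanding by the Leibniz rule and applying Moser-type product estimates in $H^{4-j}(\Omega)$, with nonlinear coefficients absorbed into $\me$, I obtain
\begin{equation*}
\norm{\dt^j v(t)}_{4-j}^2 \le P(\me)\sum_{\ell=0}^{j-1}\norm{\nabla\dt^\ell q(t)}_{4-j}^2 + \text{lower order},
\end{equation*}
whose right-hand side is controlled by Proposition \ref{f1b} together with \eqref{etaest}, yielding $\norm{\dt^j v(t)}_{4-j}^2 \le M_0+TP(\fgk(T))$ at once for $j=1,2,3$.

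For $j=0$, the key case, I would invoke a Hodge-type div-curl elliptic estimate in Lagrangian coordinates,
\begin{equation*}
\norm{v}_4^2 \lesssim P(\norm{\eta^\kappa}_4)\Bigl(\norm{v}_0^2 + \norm{\divak v}_3^2 + \norm{\curl_{\ak} v}_3^2 + \abs{v\cdot\nk}_{7/2}^2\Bigr),
\end{equation*}
and control each piece separately. The $L^2$-part is bounded by the fundamental theorem of calculus together with the estimate for $\dt v$ above. The continuity equation in \eqref{approximate} gives $\divak v = \tfrac{1}{q}\bigl(\tfrac{q}{\theta}\dt\theta - \dt q\bigr)$, so $\norm{\divak v(t)}_3^2 \le P(\me)\bigl(\norm{\dt q(t)}_3^2 + \norm{\dt\theta(t)}_3^2\bigr)$: the $q$-piece is Proposition \ref{f1b}, while $\norm{\dt\theta(t)}_3 \le \norm{\dt\theta(0)}_3 + \int_0^t\norm{\dt^2\theta}_3\,ds$, combined with $\norm{\dt^2\theta}_3^2 \in \E^\kappa$, delivers $\norm{\dt\theta(t)}_3^2 \le M_0+TP(\fgk)$. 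Taking the $\ak$-curl of the momentum equation and using the geometric identity $\curl_{\ak}\nak \equiv 0$ produces the vorticity equation
\begin{equation*}
\dt(\curl_{\ak} v) = -\nak\!\bigl(\tfrac{R\theta}{q}\bigr)\times\nak q + [\dt,\curl_{\ak}]v,
\end{equation*}
whose right-hand side is controlled in $H^3(\Omega)$ by $P(\me)\cdot\text{lower-order energy}$, so integration in time yields $\norm{\curl_{\ak} v(t)}_3^2 \le M_0+TP(\fgk)$.

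The hard part is the boundary trace $\abs{v\cdot\nk}_{7/2}(\Sigma)$: the naive bound $\abs{v\cdot\nk}_{7/2} \lesssim \norm{v}_4$ is circular, and $\abs{\nk}_{H^{7/2}(\Sigma)}$ is not uniformly controlled in $\kappa$ through $\norm{\eta^\kappa}_5$. I would bypass this via the kinematic identity $v = \dt\eta - \fk$. The $\fk$-contribution is harmless: revisiting the proof of Lemma \ref{preest} (cf.\ \eqref{bbb}) shows $\norm{\fk}_4 \lesssim P(\norm{\eta}_4)\norm{v}_3$, because the boundary data for $\fk$ in \eqref{etaaa} involves only tangential derivatives $\p_\alpha v$ whose $H^{3/2}(\Sigma)$-trace is dominated by $\norm{v}_3(\Omega) \le \mathcal{F}^{1/2}$; hence $\abs{\fk\cdot\nk}_{7/2}^2 \le M_0+TP(\fgk)$. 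For $\dt\eta\cdot\nk$, I would use $\dt\eta\cdot\nk = \dt(\eta\cdot\nk) - \eta\cdot\dt\nk$ and exploit the tangential boundary regularity of $\eta\cdot\nk$ encoded in $\abs{\bp^4\Lambda_\kappa\eta\cdot\nk}_0^2 \in \E^\kappa$ together with the relation $\eta^\kappa|_\Sigma = \Lambda_\kappa^2\eta$, so as to recover tangential derivatives of $\eta\cdot\nk$ on $\Sigma$ without passing through a high-order trace of $v$. The $\dt$-action is converted to a time integral, with $\dt\nk$ estimated through $\dt\ak$ via Lemma \ref{preest}. Collecting all four ingredients then closes the bound $\norm{v(t)}_4^2 \le M_0+TP(\fgk(T))$.
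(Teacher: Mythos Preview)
Your treatment of $j=1,2,3$ matches the paper's exactly. The divergence is at $j=0$. You invoke a Hodge estimate with boundary normal trace $\abs{v\cdot\nk}_{7/2}$, whereas the paper's Lemma~\ref{hhle} uses the \emph{interior} tangential quantity $\norm{\bp^4 v}_0^2$:
\[
\norm{v}_4^2 \le P\bigl(\norm{\eta^\kappa}_4^2\bigr)\bigl(\norm{v}_0^2 + \norm{\bp^4 v}_0^2 + \norm{\curl_{\ak} v}_3^2 + \norm{\divak v}_3^2\bigr).
\]
Since $\norm{\bp^4 v}_0^2$ was already supplied by Proposition~\ref{ppro1} (the tangential energy estimate via Alinhac good unknowns), the paper's argument closes immediately---there is no ``hard part''.

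Your boundary-trace route has a genuine gap. Placing any of the pieces $\psi^\kappa\cdot\nk$, $\eta\cdot\dt\nk$, or $\dt(\eta\cdot\nk)$ into $H^{7/2}(\Sigma)$ forces the factor $\nk$ (respectively $\dt\nk$) into $H^{7/2}(\Sigma)$: for $s>1$ on the $2$-torus the product estimate $\abs{fg}_s \lesssim \abs{f}_s\abs{g}_{L^\infty}+\abs{f}_{L^\infty}\abs{g}_s$ offers no saving when only one factor reaches $H^s$. Uniformly in $\kappa$ you have only $\norm{\a^\kappa}_3$, hence $\abs{\nk}_{5/2}$ and likewise $\abs{\dt\nk}_{5/2}$, so these products land in $H^{5/2}(\Sigma)$ at best. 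The appeal to $\abs{\bp^4\Lambda_\kappa\eta\cdot\nk}_0\in\E^\kappa$ does not rescue the argument either: that controls tangential derivatives of $\Lambda_\kappa\eta\cdot\nk$, not of $\eta\cdot\nk$, and in any case gives no handle on a time derivative at the $H^{7/2}$ level. The remedy is simply to switch to the Hodge form of Lemma~\ref{hhle} and cite Proposition~\ref{ppro1} for $\norm{\bp^4 v}_0^2$.
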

\begin{proof}
It follows from applying $\dt^{j-1}$, $j=1,2,3,$ to the third equation in \eqref{approximate} and \eqref{henergy} that
\begin{align}\label{ves11}
\norm{ \dt^j v }_{4-j}^2&
=\ns{\dfrac{R\theta}{q}\nak \dt^{j-1} q+ \[\dt^{j-1},\dfrac{R\theta}{q} \nak\]  q}_{4-j}\nonumber
\\&\leq  \me\(\ns{\dt^{j-1}q}_{5-j}+\MCF\)\leq \Mzkd+ T \pek .
\end{align}

Now, we estimate $\norm{v}_4^2$. By taking $\curl_{\a^{\kappa}}:=\nak\times$ of the third equation in \eqref{approximate}, we  obtain
\begin{equation}\label{vvv1}
         \pa_t(\curl_\ak v) = \curl_{\dt\ak} v-\frac{R\theta}{q}\[ \curl_\ak,\dfrac{q}{R\theta}\]\dt v .
\end{equation}
It follows that by the fundamental theorem of calculus,
\begin{align}\label{cures}
\norm{\curl_\ak v}_3 &\le M_0+\int_0^T\norm{\curl_{\dt\ak} v-\frac{R\theta}{q}\[ \curl_\ak,\dfrac{q}{R\theta}\]\dt v }_3
\nonumber\\&\leq M_0+  T\sup_{[0,T]}\pek .
\end{align}
On the other hand, by the second equation in \eqref{approximate} and \eqref{henergy}, we have
\beq\label{dives}
\norm{\Div_\ak   v}_3^2= \norm{-\dfrac{1}{q}\dt q+\dfrac{1}{\theta}\dt \theta}_3^2\leq \me \(\norm{ \dt q }_3^2+\MCF\)\leq \Mzkd+ T \pek.
\eeq
Hence, the Hodge-type estimates of Lemma \ref{hhle}, \eqref{t1est}, \eqref{cures} and \eqref{dives} yield
\begin{align}\label{ves12}
\norm{v}_4^2 &\le P\(\ns{ \eta^\kappa}_4\) \( \norm{v}_0^2+\norm{\bp^4 v}_0^2+\norm{\curl_\ak v}_3^2+\norm{\Div_\ak   v}_3^2 \)
\nonumber\\&\leq M_0+T\pek .
\end{align}
This together with \eqref{ves11} for $j=1,2,3$ implies \eqref{henergyb}.
\end{proof}

Finally, we derive the estimates of the temperature $\theta$.

\begin{proposition}\label{prothe}
For $t\in [0,T]$, it holds that
\begin{align}\label{estfortheta}
\sum_{j=1}^3\norm{\dt^j\theta(t)}_{5-j}^2  \leq \Mzkd+ T \pek.
\end{align}
\end{proposition}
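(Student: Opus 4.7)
The plan is to treat the temperature equation as an elliptic equation for $\dt^j\theta$ with a Neumann boundary condition, and to run an induction on $j$ from $j=3$ down to $j=1$, using Proposition \ref{prop31} as the base case.

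Applying $\dt^j$ to the fourth equation of \eqref{approximate} and to its Neumann boundary condition, we obtain for $j=1,2,3$:
\begin{equation*}
\begin{cases}
-\mu\Delta_\ak\dt^j\theta = -\tfrac{c_v q}{R\theta}\dt^{j+1}\theta + \mathcal{R}^j  & \text{in }\Omega,\\
\nabla_\ak\dt^j\theta\cdot\n^\kappa = -[\dt^j,\nabla_\ak\cdot\n^\kappa]\theta & \text{on }\Sigma,
\end{cases}
\end{equation*}
where $\mathcal{R}^j := -\dt^j(q\divak v) + \dt^j\Xi^\delta + [\dt^j,\mu\Delta_\ak]\theta - [\dt^j,\tfrac{c_v q}{R\theta}]\dt\theta$. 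Standard Neumann elliptic estimates for operators with $H^3$-coefficients (applicable since $\ak\in H^3$) yield
\begin{equation*}
\norm{\dt^j\theta}_{5-j}^2 \le C\(\norm{\dt^{j+1}\theta}_{3-j}^2 + \norm{\mathcal{R}^j}_{3-j}^2 + \abs{[\dt^j,\nabla_\ak\cdot\n^\kappa]\theta}_{7/2-j}^2 + \norm{\dt^j\theta}_0^2\).
\end{equation*}

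Every commutator appearing in $\mathcal{R}^j$ and in the boundary term is a product of $\dt^m\ak$, $\dt^m\theta$, $\dt^m v$ and $\dt^m q$ with $m\le j$; by Lemma \ref{preest}, Propositions \ref{f1b} and \ref{f1}, the definition \eqref{fdef} of $\MCF$, and the convergence \eqref{covremeq355} controlling $\dt^j\Xi^\delta$, all of these are bounded by $\me(t)\le P\(\Mzkd + T\fgk(T)\)$, and the zero-order term $\norm{\dt^j\theta}_0^2\le\MCF$ is likewise controlled. The induction then proceeds as follows: for $j=3$, Proposition \ref{prop31} directly supplies $\norm{\dt^4\theta}_0^2\le\Mzkd+T\pek$, yielding $\norm{\dt^3\theta}_2^2\le \Mzkd + T\pek$; for $j=2$, the trivial inequality $\norm{\dt^3\theta}_1\le \norm{\dt^3\theta}_2$ together with the previous step gives $\norm{\dt^2\theta}_3^2\le \Mzkd + T\pek$; and finally for $j=1$, $\norm{\dt^2\theta}_2\le \norm{\dt^2\theta}_3$ closes the loop and yields $\norm{\dt\theta}_4^2\le \Mzkd + T\pek$, producing \eqref{estfortheta}.

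The main technical task is the verification of the product and commutator estimates in $H^{3-j}(\Omega)$ and $H^{7/2-j}(\Sigma)$; these are routine via the Sobolev algebra properties combined with the regularities encoded in $\E^\kappa$ and $\MCF$. The key structural point is that, through the elliptic rearrangement, the bad term $-\tfrac{c_v q}{R\theta}\dt^{j+1}\theta$ appears on the right-hand side and is measured in a spatial norm one order lower than $\dt^j\theta$ on the left; this allows Proposition \ref{prop31}'s seed bound on $\norm{\dt^4\theta}_0$ (made possible, in turn, by the coupling structure exploited in its proof) to propagate downward through successive applications of the Neumann elliptic estimate and reach $\norm{\dt\theta}_4$.
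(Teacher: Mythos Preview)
Your proof is correct and follows essentially the same approach as the paper: both treat the temperature equation as a Neumann elliptic problem for $\dt^j\theta$ and run a downward induction from $j=3$ (seeded by Proposition~\ref{prop31}) to $j=1$. The one cosmetic difference is that the paper first uses the continuity equation to rewrite $q\divak v = -\dt q + \tfrac{q}{\theta}\dt\theta$, so that the source term becomes $\dt^{j+1}q$ (controlled directly by Proposition~\ref{f1b}) rather than your $-\dt^j(q\divak v)$; your version instead requires the velocity estimates of Proposition~\ref{f1}, but since those are already available the two routes are equivalent.
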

\begin{proof}
It follows from the fourth and second equations in \eqref{approximate} that
\beq\label{theeq}
- \mu   \Delta_\ak \theta=-\(\frac{c_v  }{R }+1\)\frac{ q}{\theta}\partial_t\theta  + \dt q +   \Xi^\delta   .
\eeq
We apply $\dt^j,\ j=1,2,3,$ to  \eqref{theeq} to find
\begin{equation}\label{hqeqth}
-  \mu   \Delta_\ak\dt^j\theta =-\(\frac{c_v  }{R }+1\)\frac{ q}{\theta}\partial_t^{j+1}\theta  +\dt^{j+1} q+\mathfrak{g}^{\kappa,j},
\end{equation}
where
\begin{equation}
 \mathfrak g^{\kappa,j}= \mu\left[\dt^j, \dak\right]\theta-\(\frac{c_v  }{R }+1\)\left[\dt^j,\dfrac{q}{ \theta}\right]\dt\theta+   \dt^j\Xi^\delta .
\end{equation}
On the other hand, we have
\beq\label{hqeqthbd}
\nabla_\ak \dt^j\theta\cdot\nk =\mathfrak{h}^{\kappa,j}:=-\[\dt^j,(\nk)^T\ak\]\nabla \theta\text{ on }\Sigma.
\eeq
The elliptic estimates on \eqref{hqeqth} and \eqref{hqeqthbd} yields
\begin{align}\label{tell}
\ns{  \dt^j \theta}_{5-j}  &\le  P\(\ns{ \eta^\kappa}_4\) \(\ns{-\(\frac{c_v  }{R }+1\)\frac{ q}{\theta}\partial_t^{j+1}\theta  +\dt^{j+1} q+\mathfrak{g}^j}_{3-j}+\abs{\mathfrak{h}^{\kappa,j} }_{7/2-j}^2\)\nonumber \\&\le  \me\( \ns{  \dt^{j+1}\theta}_{3-j}+\ns{  \dt^{j+1}q}_{3-j}+\ns{  \dt^{j}\eta^\kappa}_{5-j}+\ns{\dt^j\Xi^\delta}_{3-j}+ \MCF\).
\end{align}
Then a simple induction argument on \eqref{tell} implies, by \eqref{henergy}, \eqref{tes1} and \eqref{henergyb},
\begin{align}
\sum_{j=1}^3\norm{\dt^j\theta}_{5-j}^2 &\le  \me\(\ns{  \dt^{4}\theta}_{0}+\sum_{j=2}^4\ns{  \dt^{j}q}_{4-j}+\sum_{j=1}^3\ns{  \dt^{j}\eta^\kappa}_{5-j}+\sum_{j=1}^3\ns{  \dt^{j}\Xi^\delta}_{3-j}+\MCF\) .
\end{align}
We thus conclude the proposition.
\end{proof}

\subsection{Uniform-in-$(\kappa,\delta)$ estimates}
Now we can conclude the following $(\kappa,\delta)$-independent estimates of solutions to \eqref{approximate}.
\begin{theorem} \label{th43}
There exists a time $T_0$ independent of $(\kappa,\delta)$ such that
\begin{equation}
\label{bound}
\fgk(T_0)\leq \Mzkd.
\end{equation}
\end{theorem}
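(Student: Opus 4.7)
The plan is to assemble the estimates from the preceding propositions together with Lemma \ref{preest} and the preliminary estimate \eqref{etaest} into a single closed integral inequality
\[
\fgk(T) \;\leq\; \Mzkd + T\,P\!\left(\fgk(T)\right),
\]
valid for every $T\in [0,T_0^{\kappa,\delta}]$ on which the a priori conditions \eqref{apriori1}--\eqref{apriori2} hold, and then to close this inequality by a standard continuity/bootstrap argument on a short time interval whose length is controlled solely by $\Mzkd$, hence is independent of $(\kappa,\delta)$.

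The first step is bookkeeping: verify that every summand of $\sup_{[0,T]}\E^\kappa + \int_0^T \D$ has already been estimated. Specifically, the controls of $\norm{(\dt^j q,\dt^j v)}_{4-j}^2$ for $0\le j\le 3$ come from Propositions \ref{f1b} and \ref{f1}; the top-order level $\norm{(\dt^4 q,\dt^4 v,\dt^4\theta)}_0^2$ together with the dissipation $\int_0^T \D$ is provided by Proposition \ref{prop31}; the tangential piece $\abs{\bp^4\Lambda_\kappa\eta\cdot\nk}_0^2$ is from Proposition \ref{ppro1}; the temporal regularity $\norm{\dt^j\theta}_{5-j}^2$ for $j=1,2,3$ is from Proposition \ref{prothe}; the flow-map bounds $\norm{\dt^j\eta}_{5-j}^2$ for $1\le j\le 4$ follow from Lemma \ref{preest} applied to the $v$-bounds just obtained; while $\norm{\eta}_4^2$ and $\norm{\theta}_4^2$ are contained in $\MCF$, and \eqref{etaest} provides $\MCF(T)\leq \Mzkd + T\fgk(T)$. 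Since the polynomial coefficient $\me$ appearing throughout depends only on $\sup_{[0,t]}\MCF$, it too is dominated by $P(\Mzkd + T\fgk(T))$, and summing the individual bounds yields the master inequality displayed above.

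The remaining step is a continuity argument. By Theorem \ref{nonlinearthm}, for each fixed $(\kappa,\delta)$ the map $T\mapsto \fgk(T)$ is continuous on $[0,T_0^{\kappa,\delta}]$ with $\fgk(0)\leq \Mzkd$. Given the master inequality, one chooses $T_0>0$ so small that $P(2\Mzkd)\,T_0\leq \Mzkd$; then a standard bootstrap on the set $\{T\in [0,T_0^{\kappa,\delta}]\cap[0,T_0]: \fgk(T)\leq 2\Mzkd\}$ shows this set is both open and closed in $[0,T_0^{\kappa,\delta}]\cap[0,T_0]$, hence equals the whole interval. Shrinking $T_0$ further if necessary (using $\int_0^T \dt(J^\kappa,q,\theta,\nabla q\cdot N)$-type estimates absorbed into $\fgk$), the lower bounds \eqref{apriori1}--\eqref{apriori2} are preserved on $[0,T_0]$, so the standard continuation criterion for \eqref{approximate} forces $T_0^{\kappa,\delta}\geq T_0$, giving \eqref{bound}. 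The main obstacle is not analytical but structural: one must check that the polynomial $P$ produced in the master inequality depends only on the physical constants and on $\Mzkd$ (not on $\kappa$ or $\delta$), which is exactly what the $\kappa$-independent geometric cancellation in Proposition \ref{ppro1} and the $\delta$-independent coupling argument in Proposition \ref{prop31} were designed to guarantee.
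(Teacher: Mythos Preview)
Your proposal is correct and follows essentially the same approach as the paper: assemble the estimates from Propositions \ref{prop31}, \ref{ppro1}, \ref{f1b}, \ref{f1}, \ref{prothe}, Lemma \ref{preest}, and \eqref{etaest} into the master inequality $\fgk(T)\le \Mzkd + T\,P(\fgk(T))$, and then close it by a continuity argument to obtain a $(\kappa,\delta)$-independent $T_0$. Your write-up is in fact more explicit than the paper's (which simply states \eqref{concl1} and invokes the smallness of $T$), and your remark on the continuation criterion ensuring $T_0^{\kappa,\delta}\ge T_0$ fills in a step the paper leaves implicit.
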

\begin{proof}
For $0<T\le T_0^{\kappa,\delta}$, by the definition \eqref{fgdef} of $\fgk$,  \eqref{tv1b}, \eqref{henergy},  \eqref{henergyb}, \eqref{estfortheta}, \eqref{t1est},  \eqref{tes1} and \eqref{etaest}, we deduce
\beq\label{concl1}
\fgk(T)\leq \Mzkd +  T\pek.
\eeq
From \eqref{concl1}, there is a $T_0$, depending on $\Mzkd$ but not on ${\kappa,\delta}$, such that
\begin{equation}
\fgk(T_0)\leq 2 \Mzkd.
\end{equation}
This yields \eqref{bound} by redefining the polynomial of $M_0$. \eqref{bound} also allows us to get that  \eqref{apriori1} and \eqref{apriori2} hold on $[0,T_0]$ by restricting $T_0$ smaller if necessary.
\end{proof}

\section{Linearized $(\kappa,\delta)$-Euler--Fourier system}\label{exist0}

In this section, we will consider the following linearized $(\kappa,\delta)$-Euler--Fourier system:
 for fixed $0<\kappa,\delta<1$ and given $(\tilde\eta,\tilde q,\tilde \theta)$,
\beq\label{lkproblem}
\begin{cases}
\dt q-\frac{\tq}{\tth}\partial_t\theta+ \tq\divakt  v=0 &\text{in } \Omega \\
\frac{\tq}{R\tthe}\partial_t v  +\nakt q  =0  &\text{in } \Omega \\
 \frac{ c_v\tq}{R\tthe}\partial_t\theta  +\tq\divakt v -\mu\dakt \theta=\Xi^\delta  &\text{in } \Omega \\
q=\bar{p},\quad \nakt \theta\cdot \nkt=0&\text{on }\Sigma \\
 (q(0), v(0), \theta(0))  = \(q_{0}^{\delta }, v_{0}^{\delta }, \theta_{0}^{\kappa,\delta}\) ,
\end{cases}
\eeq
where $\akt=\a^\kappa(\tilde\eta)$ (and $\nkt$, $\jkt$, etc.). Assume that
\beq\label{haha-0}
\jkt, \tq, \tthe \ge \dfrac{c_0}{2}>0 \text{ in }\Omega
\eeq
and
\beq\label{haha0}
(\dt^j\tilde\eta(0),\dt^j\tilde q(0), \dt^j\tilde \theta(0))=(\dt^j \eta^{(\kd)}(0),\dt^j  q^{(\kd)}(0), \dt^j  \theta^{(\kd)}(0)),\ j=0,1,2,3.
\eeq
We construct the initial data $ (\partial_t^j q(0),\partial_t^j v(0),\partial_t^j \theta(0))$, $j=1,2,3,4$,
recursively by
\begin{align}
\label{in0defk1kk2tt}
\partial_t^j q (0)
 &:=  \partial_t^{j-1}
\(-\tq\divakt  v+ \frac{\tq}{\tth}\partial_t\theta\) (0) ,
\\\label{in0defk1kk3tt}\partial_t^j v (0)&
 :=  \partial_t^{j-1}
\( -\frac{R\tth }{\tq } \nakt q \) (0),
\\\label{in0defk1kk4tt}
\partial_t^j  \theta (0)
 &:= \dt^{j-1}\( \frac{R\tth}{c_v \tq}\(-\tq\divakt v +\mu\Delta_\akt \theta+\Xi^\delta \)\) (0)  .
\end{align}
Comparing \eqref{in0defk1kk2tt}--\eqref{in0defk1kk4tt} and \eqref{in0defk1kk2}--\eqref{in0defk1kk4}, by \eqref{haha0}, one finds
 \beq\label{haha}
(\dt^j q(0),\dt^j v(0),\dt^j \theta(0))=(\dt^j  q^{(\kd)}(0),\dt^j  v^{(\kd)}(0),\dt^j  \theta^{(\kd)}(0)),\ j=0,1,2,3,4,
\eeq
which imply in particular that the following third order compatibility conditions of \eqref{lkproblem} hold:
\beq\label{incqq3l}
\dt^j q(0)=\dt^j \bar{p},\quad \dt^j(\nakt \theta\cdot \nkt)(0)=0 \text{ on }\Sigma,\ j=0,1,2,3.
\eeq

The existence of unique solution to \eqref{lkproblem} is recorded in the following theorem. Define
\beq\label{ffgdef}
 \mathfrak{G}(t):=
\sup_{[0,t]}  \mathfrak{E}+\int_0^{t} \D,
\eeq
where
\begin{align}
 \mathfrak{E}     :=  \sum_{j=0}^4\norm{ \(\dt^j q ,\dt^j v\) }_{4-j}^2 +\norm{\eta}_4^2  +\sum_{j=1}^4\norm{  \dt^j\eta }_{5-j}^2
  + \sum_{j=0}^3\norm{\dt^j\theta}_{5-j}^2 +\norm{\dt^4\theta}_0^2.
\end{align}
We shall denote the $q$-part of $\mathfrak{G}$ by $\mathfrak{G}[q]$, etc.

\begin{theorem}
\label{linearthm}
For $\tilde T>0$, assume that $(\tilde\eta,\tilde q, \tilde \theta)$ is given such that $\mathfrak{G}[\tilde\eta,\tilde q, \tilde \theta](\tilde T) <\infty$, \eqref{haha-0} and \eqref{haha0} hold.
Then there exists a unique solution $( q,v, \theta)$ to  \eqref{lkproblem} on $[0, \tilde T]$ that satisfies
\begin{equation}\label{eeest}
\GG[q,v, \theta](\tilde T)\leq    \tilde M_0^\kd ,
\end{equation}
where $\tilde M_0^\kd$ is a constant  depending on $\tilde T$, $(\tilde\eta,\tilde q, \tilde \theta)$ and $M_0^\kd$, with $M_0^\kd$ depending on $M_0$, $\kappa$ and $\delta$.
\end{theorem}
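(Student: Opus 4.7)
The strategy, as the introduction of the paper indicates, is the two-step regularization via an $\eps$-pseudo-parabolic perturbation of the temperature equation. For fixed $0 < \eps < 1$, I would consider the linear $(\kappa\eps,\delta)$-approximate system \eqref{lkeproblem} obtained from \eqref{lkproblem} by replacing the temperature equation with
\begin{equation*}
\frac{c_v\tq}{R\tth}\dt\theta + \tq\divakt v - \mu\dakt\theta - \eps\,\dt\!\bigl(\dakt\theta\bigr) = \Xi^\delta\quad\text{in }\Omega
\end{equation*}
and adding the compensating term $\eps\frac{R}{c_v}\dt(\dakt\theta)$ to the continuity equation. This preserves the coupling structure that was essential in deriving \eqref{intro1}--\eqref{intro5} and hence in the energy estimates of Section~4; meanwhile, for fixed $\eps>0$ the additional term provides enough time regularity on $\dakt\theta$ to decouple the Euler part from the Fourier part.

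For existence at the $\eps$-level, I would run a fixed-point argument on, say, $\theta$: given $\tilde\theta^\ast$ in a suitable Banach space, solve the Euler part (a linear symmetric hyperbolic system with the Dirichlet condition $q=\bar p$) for $(q,v)$ by standard Friedrichs-type theory, substitute $v$ into the pseudo-parabolic equation for $\theta$, and solve that equation (which is a linear parabolic equation in the $\eps$-modified principal part, with Neumann boundary condition $\nakt\theta\cdot\nkt=0$) by Galerkin approximation or semigroup methods. Contraction follows from low-order energy estimates and Gronwall. The compatibility conditions \eqref{incqq3l}, guaranteed by \eqref{haha}, ensure that the constructed solution has the required regularity at $t=0$.

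Next I would derive $\eps$-independent estimates for the solution $(q^\eps,v^\eps,\theta^\eps)$ of \eqref{lkeproblem}, mirroring the proofs of Propositions~\ref{prop31}--\ref{prothe}. The main changes are: (i) the $\eps\frac{R}{c_v}\dt(\dakt\theta)$ term in the continuity equation produces additional commutator-type contributions that can be absorbed because the coupling cancellation of \eqref{efaab} still works up to $\eps$-terms which are controllable by the dissipation $\eps\|\nakt\dt^4\theta\|_0^2$; (ii) the highest-order temporal energy identity \eqref{keses} acquires the extra good term $\frac{\eps}{2}\dtt\|\nakt\dt^4\theta\|_0^2$ (and its lower-order analogues) together with controllable error terms; (iii) the tangential estimate of Proposition~\ref{ppro1} goes through verbatim since it does not involve $\dt^4\theta$; (iv) the elliptic estimates of Section~\ref{sec43} are similarly modified but survive with constants independent of $\eps$ thanks to the good sign of the $\eps$-term. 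In this way one obtains a bound $\GG[q^\eps,v^\eps,\theta^\eps](\tilde T)\leq \tilde M_0^\kd$ uniform in $\eps$, plus an $\eps$-dependent (and hence throwaway) bound on $\sqrt{\eps}\,\|\nakt\dt^j\theta^\eps\|$ that provides enough compactness.

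Finally, I would pass to the limit $\eps\to 0$ by extracting weak-$\ast$ limits of $(q^\eps,v^\eps,\theta^\eps)$ in the spaces prescribed by $\GG$, using Aubin--Lions for strong convergence of the lower-order terms so that the nonlinear coefficients $\akt,\nkt,\jkt$ (which are fixed functions of $\tilde\eta$) combine correctly; the pseudo-parabolic terms $\eps\dt(\dakt\theta^\eps)$ tend to zero as distributions because $\dakt\theta^\eps$ is $\eps$-uniformly bounded and multiplied by $\eps$. The limit $(q,v,\theta)$ solves \eqref{lkproblem} and obeys \eqref{eeest}. Uniqueness follows from a direct low-regularity energy estimate on the difference of two solutions, exploiting that \eqref{lkproblem} is linear in $(q,v,\theta)$. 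I expect the hardest step to be item~(ii) above: carrying out the $\eps$-independent top-order energy identity while ensuring that every new $\eps$-commutator produced by the modified continuity and temperature equations either has a good sign, is absorbable into $\eps\|\nakt\dt^4\theta\|_0^2$, or lands in the $\sum_{\mathcal R}$-type remainders controllable by $M_0^\kd + \tilde T P(\GG(\tilde T))$.
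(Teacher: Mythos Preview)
Your overall strategy matches the paper's (Sections~5.1--5.4): $\eps$-pseudo-parabolic regularization \eqref{lkeproblem}, solvability at fixed $\eps$ by a fixed-point argument (Theorem~\ref{th43epses}), $\eps$-independent estimates (Theorem~\ref{th43eps}), and passage to the limit. There are, however, several implementation differences worth noting. For the fixed-point step the paper does \emph{not} treat the Euler part as a first-order hyperbolic IBVP: instead it passes to the acoustic wave equation \eqref{gd11} for $q$ (which carries the clean Dirichlet data $q=\bar p$), solves that, recovers $v$ by time-integration, and then views the pseudo-parabolic equation as an elliptic problem for each fixed $t$ (Proposition~\ref{pseq1k22hh}, equation \eqref{psg}); this sidesteps the characteristic-boundary subtleties that ``standard Friedrichs theory'' would have to confront. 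For the $\eps$-independent tangential estimates the paper does \emph{not} repeat Proposition~\ref{ppro1}: since $\kappa>0$ is fixed and constants may depend on $\kappa$, no good-unknown machinery is needed, and $\bp^4\theta$ is estimated alongside $(\bp^4 q,\bp^4 v)$ directly (see \eqref{efv1}--\eqref{kesesee}). Finally, the paper's route to $\eps$-uniform control of $\theta$ in Step~2 is more explicit than your item~(iv): combining the modified continuity and temperature equations gives an ODE in time for $\Delta_{\akt}\theta$ (equation~\eqref{thes}), whose Duhamel solution \eqref{there} yields $\eps$-independent bounds on $\|\dt^j(\Delta_{\akt}\theta)\|_{3-j}$, from which elliptic regularity for $\theta$ follows. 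None of this invalidates your plan, but these are the concrete mechanisms the paper uses where your sketch is schematic.
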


The rest of this section is devoted to proving Theorem \ref{linearthm}.
\subsection{Linear $(\kep,\delta)$-approximate problem}\label{secs4.1}

The solution to \eqref{lkproblem} can be constructed via a fixed-point argument by decoupling the equations provided we could solve  each equation in \eqref{lkproblem} severally in a suitable way. Similarly as \eqref{hqeq},  the first equation in \eqref{lkproblem} is equivalent to
\beq\label{hqeqt}
 \dt^2 q -\divakt\( R\tth \nakt q\)=\partial_t \(\frac{\tq}{\tth}\partial_t \theta\)+\nakt \tq\cdot\dt v-\[\dt,\tq\divakt\]v .
\eeq
Note that the last two terms in \eqref{hqeqt} involves only one derivative of $v$, and this will enable one to decouple out the second equation in \eqref{lkproblem} as for the isentropic case. However, the time regularity of $\partial_t^2\theta$ is not sufficient when we try to solve \eqref{hqeqt} for $q$ with the desired regularity ($i.e.,$ $\dt^5 \theta$ is out of control when applying $\dt^3$ to \eqref{hqeqt}).  On the other hand, the time regularity of $\divakt v$ is also not sufficient for solving the temperature equation ($i.e.$, it needs $\divakt \dt^4 v\in (H^1(\Omega))^\ast$, but one has only $\divakt \dt^4 v\in(H_0^1(\Omega))^\ast$). It should be remarked here that for the case of the Dirichlet boundary condition for the temperature, the first difficulty can be avoided by working instead with the wave equation for the density, while there is no obstacle in solving the temperature equation.

Note that by using the third equation in \eqref{lkproblem}, one may rewrite the first equation as
\beq\label{q10k}
\dt  q   +\(1+\frac{R}{c_v}\) \tq\divakt  v=\frac{\mu R}{c_v}\Delta_\akt \theta+\frac{  R}{c_v}\Xi^\delta .
\eeq
Then \eqref{hqeqt} can be rewritten as
\begin{align}\label{qeqt0}
& \dt^2 q -\(1+\frac{R}{c_v}\)\divakt\( R\tth \nakt q\)
 \nonumber\\&\quad= \frac{\mu R}{c_v}\dt\(\dakt\theta\)+\(1+\frac{R}{c_v}\)\(\nakt \tq\cdot\dt v-\[\dt,\tq\divakt\]v\)+\frac{  R}{c_v}\dt \Xi^\delta.
\end{align}
Yet the time regularity of  $ \dt\(\dakt\theta\)$ is still not sufficient for solving $q$ with the desired regularity. Despite this, it motivates us  to further regularize the third equation in \eqref{lkproblem} by the following equation: for $0<\eps<1$,
\beq\label{pseq1}
 \frac{ c_v\tq}{R\tthe}\partial_t\theta  +\tq\divakt v -\mu\dakt \theta-\eps  \dt\(\Delta_\akt \theta\) = \Xi^\delta.
\eeq
Note that if we modify the first equation in \eqref{lkproblem} accordingly by
\beq\label{qqqeq}
 \dt  q  -\frac{\tilde q} {\tilde \theta}\dt \theta     +\epsilon\frac{R}{c_v}\dt\(\dakt\theta\) +\tq \divakt v =0,
\eeq
then such regularization does not change \eqref{q10k} and hence \eqref{qeqt0}.
For each fixed $\eps>0$, the improved regularity of $\dt\(\dakt\theta\)$  provided by \eqref{pseq1}  is sufficient for solving \eqref{qeqt0}, while the time regularity of $\divakt v $ is now sufficient for solving \eqref{pseq1}.

The $\eps$-regularized equation \eqref{pseq1} is a pseudo-parabolic type equation, which could be solved as the parabolic equation by using the Galerkin method \cite{Evans}, and we refer to the classical works \cite{Ti,ST} on the general pseudo-parabolic equation. Below, due to its speciality of \eqref{pseq1},  we will present a more direct view of it as a composition of an elliptic equation (for each fixed time) and an ODE (in time), which enables one to clarify the compatibility condition that should be imposed on the initial data of \eqref{pseq1}.
 \begin{proposition} \label{pseq1k22hh}
Suppose that $\theta_{0}^{\kappa,\delta}$ satisfies
\beq \label{bbq11}
\nabla_{\a_0^{\kappa,\delta}} \theta_{0}^{\kappa,\delta} \cdot \n_0^{\kappa,\delta}=0 \text{ on }\Sigma.
\eeq
Then for regular solutions,
\beq\label{pseq1k}
\begin{cases}
\frac{ c_v\tq}{R\tthe}\partial_t\theta  +\tq\divakt v -\mu\dakt \theta-\eps  \dt\(\Delta_\akt \theta\) =\Xi^\delta &\text{in }\Omega
\\
\nakt \theta\cdot \nkt=0&\text{on }\Sigma
\\\theta(0) =  \theta_{0}^{\kappa,\delta}
\end{cases}
\eeq
is equivalent to
\beq\label{good21}
\begin{cases}
 \(\frac{c_v \tq}{R\tth} \theta-\eps   \Delta_\akt  \theta\) (t)
 \\\quad
 = \frac{c_v q_0^\delta}{R  }- \eps   \Delta_{\a_0^{\kappa,\delta}}   \theta_{0}^{\kappa,\delta} + \dis\int_0^t\( \partial_t\( \frac{c_v \tq}{R\tth}\)\theta-\tq\divakt v +\mu\dakt \theta + \Xi^\delta \)    &\text{in }\Omega
\\
\nakt \theta\cdot \nkt=0&\text{on }\Sigma .
\end{cases}
\eeq
\end{proposition}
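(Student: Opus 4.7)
The plan is to prove the two implications by an integration/differentiation in time, with one minor subtlety: in recovering the initial condition $\theta(0)=\theta_{0}^{\kappa,\delta}$ from the integral formulation, the compatibility hypothesis \eqref{bbq11} is exactly what is needed to apply uniqueness of a coercive Neumann elliptic problem.

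For the forward direction \eqref{pseq1k}$\Rightarrow$\eqref{good21}, I would rewrite the first term in the PDE as
\[
\frac{c_v\tq}{R\tth}\,\partial_t\theta = \partial_t\!\left(\frac{c_v\tq}{R\tth}\theta\right)-\partial_t\!\left(\frac{c_v\tq}{R\tth}\right)\theta,
\]
and combine $-\eps\,\partial_t(\Delta_\akt\theta)$ with the above to produce the total time derivative $\partial_t\!\bigl(\tfrac{c_v\tq}{R\tth}\theta-\eps\,\Delta_\akt\theta\bigr)$. The equation \eqref{pseq1k} then becomes
\[
\partial_t\!\left(\frac{c_v\tq}{R\tth}\theta-\eps\,\Delta_\akt\theta\right)=\partial_t\!\left(\frac{c_v\tq}{R\tth}\right)\theta-\tq\divakt v+\mu\,\Delta_\akt\theta+\Xi^\delta.
\]
Integrating from $0$ to $t$ and using the initial condition $\theta(0)=\theta_{0}^{\kappa,\delta}$, together with $\tq(0)=q_{0}^{\delta}$ and $\tth(0)=\theta_{0}^{\kappa,\delta}$, produces exactly the identity in \eqref{good21}, since $\tfrac{c_v q_0^\delta}{R\theta_0^{\kappa,\delta}}\theta_0^{\kappa,\delta}=\tfrac{c_v q_0^\delta}{R}$. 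The boundary condition $\nakt\theta\cdot\nkt=0$ is preserved trivially, being imposed pointwise in time in both systems.

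For the reverse direction \eqref{good21}$\Rightarrow$\eqref{pseq1k}, differentiating the integral identity in $t$ recovers the PDE in $\Omega$, and the boundary condition carries over directly. The remaining task is to verify $\theta(0)=\theta_{0}^{\kappa,\delta}$. Evaluating \eqref{good21} at $t=0$ yields
\[
\frac{c_v q_0^\delta}{R\,\theta_{0}^{\kappa,\delta}}\theta(0)-\eps\,\Delta_{\a_0^{\kappa,\delta}}\theta(0)=\frac{c_v q_0^\delta}{R}-\eps\,\Delta_{\a_0^{\kappa,\delta}}\theta_{0}^{\kappa,\delta}\quad\text{in }\Omega.
\]
Setting $w:=\theta(0)-\theta_{0}^{\kappa,\delta}$, this reduces to the homogeneous elliptic equation
\[
\frac{c_v q_0^\delta}{R\,\theta_{0}^{\kappa,\delta}}w-\eps\,\Delta_{\a_0^{\kappa,\delta}}w=0\quad\text{in }\Omega,
\]
and by the boundary condition in \eqref{good21} combined with the assumption \eqref{bbq11}, we get $\nabla_{\a_0^{\kappa,\delta}}w\cdot\n_{0}^{\kappa,\delta}=0$ on $\Sigma$. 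Since $\tfrac{c_v q_0^\delta}{R\theta_0^{\kappa,\delta}}>0$ by \eqref{haha-0}, this Neumann problem is coercive, and its unique solution is $w\equiv 0$, so $\theta(0)=\theta_{0}^{\kappa,\delta}$ as desired.

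The only genuine subtlety, and what I would emphasize, is the role of the hypothesis \eqref{bbq11}: without it, the Neumann trace of $w$ would carry a nonzero datum $-\nabla_{\a_0^{\kappa,\delta}}\theta_{0}^{\kappa,\delta}\cdot\n_{0}^{\kappa,\delta}$, and the uniqueness argument for recovering the initial value from the integral formulation would fail. Everything else reduces to the standard manipulation of writing a linear first-order-in-time equation as a total time derivative, so no further regularity issues arise under the \emph{regular solutions} assumption.
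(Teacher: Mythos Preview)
Your proof is correct and follows essentially the same approach as the paper. The paper's proof is terser---it states only that it suffices to verify $\theta(0)=\theta_{0}^{\kappa,\delta}$ for a solution of \eqref{good21}, evaluates at $t=0$, and invokes uniqueness for the resulting elliptic Neumann problem using \eqref{bbq11}---but your argument fills in exactly these steps, and your emphasis on the role of \eqref{bbq11} matches the paper's point precisely.
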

\begin{proof}
To conclude the proposition, it suffices to verify that the solution $\theta$ to \eqref{good21} satisfies $\theta(0)=\theta_{0}^{\kappa,\delta}$. To this end, valuing $t=0$ in \eqref{good21} leads to
\beq\label{good210}
\begin{cases}
  \frac{c_v q_0^\delta}{R \theta_{0}^{\kappa,\delta} } \theta(0)-  \eps \Delta_{\a_0^{\kappa,\delta}} \theta(0)
 =\frac{c_v q_0^\delta}{R  }-  \eps \Delta_{\a_0^{\kappa,\delta}}   \theta_{0}^{\kappa,\delta}     &\text{in }\Omega
\\
\nabla_{\a_0^{\kappa,\delta}} \theta(0)  \cdot \n_0^{\kappa,\delta}=0&\text{on }\Sigma .
\end{cases}
\eeq
Then $\theta(0)=\theta_{0}^{\kappa,\delta}$ follows from that $\theta_{0}^{\kappa,\delta}$ satisfies \eqref{bbq11} and the uniqueness of the solution to \eqref{good210}.
\end{proof}

\begin{remark}
It is very interesting to realize from Proposition \ref{pseq1k22hh} that to solve the $\eps$-pseudo-parabolic equation \eqref{pseq1k} with high  regularity,   differently from the parabolic equation, it requires only  that the initial data $\theta_{0}^{\kappa,\delta}$ satisfies the (zero-th order) compatibility condition \eqref{bbq11}; no higher order compatibility condition is needed (cf. \eqref{pseq1k00} below for the construction of the data $\partial_t^j\theta(0)$ for $j\ge 1$).
\end{remark}

In conclusion, we regularize the  linearized $(\kappa,\delta)$-Euler--Fourier system \eqref{lkproblem} by the following linear $(\kep,\delta)$-approximate problem: for $0<\eps<1,$
\begin{equation}\label{lkeproblem}
\begin{cases}
 \dt  q  -\frac{\tilde q} {\tilde \theta}\dt \theta     +\epsilon\frac{R}{c_v}\dt\(\dakt\theta\) +\tq \divakt v =\chi^\eps&\text{in } \Omega \\
\frac{\tq}{R\tth}\partial_t v  +\nakt   q  = 0  &\text{in } \Omega \\
\frac{ c_v\tq}{R\tthe}\partial_t\theta  +\tq\divakt v -\mu\dakt \theta-\eps  \dt\(\Delta_\akt \theta\) =\Xi^\delta &\text{in } \Omega \\
q=\bar{p},\quad \nakt \theta\cdot \tilde\n^\kappa=0&\text{on }\Sigma \\
 (q(0), v(0), \theta(0)) =\(q_{0}^{\delta}, v_{0}^{\delta}, \theta_{0}^{\kappa,\delta}\).
\end{cases}
\end{equation}
Here we have also introduced the corrector $\chi^\eps$, constructed in the below, so that $\(q_{0}^{\delta}, v_{0}^{\delta}, \theta_{0}^{\kappa,\delta}\)$ satisfy the  corresponding compatibility conditions of \eqref{lkeproblem}.
Let us denote $\( q^{(\eps)},v^{(\eps)},  \theta^{(\eps)}\)$ as the solution to \eqref{lkeproblem}. We construct the initial data
 $\( \dt^j q^{(\eps)}(0),\dt^j v^{(\eps)}(0),  \dt^j \theta^{(\eps)}(0)\)$, $j=1,2,3,4$, recursively by
 \begin{align}
\dt^j  q ^{(\eps)}(0)&= \dt^{j-1}\(-\(\!1+\frac{R}{c_v}\!\)\tq \divakt v^{(\eps)} + \frac{\mu R}{c_v}\dakt \theta^{(\eps)}+\frac{  R}{c_v}   \Xi^\delta + \chi^\eps\)(0) ,\label{lkeproblem000}  \\
\dt^j  v ^{(\eps)}(0)& =\dt^{j-1}\(-\frac{R\tth}{\tq}\nakt   q^{(\eps)}  \) (0) \label{lkeproblem0001}
\end{align}
 and  $\partial_t^j\theta^{(\eps)}(0)$ as the solutions to
\beq\label{pseq1k00}
\begin{cases}
\frac{c_v q_0^\delta}{R \theta_{0}^{\kappa,\delta}}\partial_t^j\theta^{(\eps)}(0)   -\eps   \Delta_{\a_0^{\kappa,\delta}}  \dt^j\theta^{(\eps)}(0)
=\(- \[\dt^{j-1}, \frac{ c_v\tq}{R\tthe}\]\partial_t\theta^{(\eps)} +\eps
\[\dt^j,\Delta_\akt\] \theta^{(\eps)}   \) (0)
\\\qquad\qquad\qquad\qquad\qquad\qquad+ \dt^{j-1}\(-\tq\divakt v^{(\eps)} +\mu\dakt \theta^{(\eps)} \)  (0)+\dt^{j-1} \Xi^\delta(0) &\text{in }\Omega
\\
\nabla_{\a_0^{\kappa,\delta}} \partial_t^j\theta^{(\eps)}(0)\cdot \n_0^{\kappa,\delta}=-\(\[\dt^j, \nkt\cdot\nakt   \]\theta^{(\eps)}\) (0)&\text{on }\Sigma.
\end{cases}
\eeq
Here the corrector $\chi^\eps$ is constructed by the time extension (see \cite{LM}) such that
\beq\label{chchi1}
\dt^j \chi^\eps(0)= \dt^j\( \frac{\mu R}{c_v}\dakt \tilde\theta\)(0)- \dt^j\( \frac{\mu R}{c_v}\dakt \theta^{(\eps)}\)(0),\ j=0,1,2,3.
\eeq
Due to  the introduction of $\chi^\eps$, by \eqref{haha0}, one finds
\beq\label{hahaee}
(\dt^j q^{(\eps)}(0),\dt^j v^{(\eps)}(0))=(\dt^j  q^{(\kd)}(0),\dt^j  v^{(\kd)}(0)),\ j=0,1,2,3,4.
\eeq
These imply in particular that the following third order compatibility conditions of \eqref{lkeproblem} hold:
\beq\label{inc3l}
\dt^j q^{(\eps)}(0)=\dt^j \bar{p},\ j=0,1,2,3,\quad \nabla_{\a_0^{\kappa,\delta}} \theta^{(\eps)}(0)  \cdot \n_0^{\kappa,\delta}=0 \quad\text{on }\Sigma.
\eeq
Note that since  $\(\eta_0^\delta,q_{0}^{\delta}, v_{0}^{\delta}, \theta_{0}^{\kappa,\delta}\)$ are smooth, so for fixed $\kd>0$ it is routine to check that $\chi^\eps\rightarrow0$ in any high order Sobolev spaces as $\eps\rightarrow0$.

\subsection{$\eps$-independent estimates of \eqref{lkeproblem}}
For each fixed $\eps>0$ (and fixed $\kappa,\delta>0$), we will employ a standard fixed-point argument to solve \eqref{lkeproblem} in Section \ref{sec4.2}. In this subsection, we shall derive the $\eps$-independent estimates of solutions to \eqref{lkeproblem}, which will enable us to consider the limit of this sequence of solutions as $\eps\rightarrow 0$ to construct the solution to  \eqref{lkproblem} in Section \ref{lexist}.

Define
\beq\label{fgedef}
\GG^\eps[q,v,\theta](t):=
\sup_{[0,t]}  \GE^\eps[q,v,\theta]+\int_0^{t} \D,
\eeq
where
\beq\label{fgedef2}
\GE^\eps[q,v,\theta]:= \GE[q,v,\theta]
    +\eps^2\sum_{j=0}^4\norm{ \dt^j \theta }_{6-j}^2 .
\eeq
Again, we will suppress the $\eps$-dependence  of solutions to \eqref{lkeproblem}.

\begin{theorem} \label{th43eps}
There is an $\tilde\eps>0$, depending on  $\kappa$, $\tilde T$ and $(\tilde\eta,\tilde q, \tilde \theta)$, such that for $0<\eps\le\tilde\eps$,
\begin{equation}
\label{boundeps}
\GG^\eps[q,v,\theta](\tilde T)\leq   \tilde M_0^\kd.
\end{equation}
\end{theorem}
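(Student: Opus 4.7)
The plan is to adapt the $(\kappa,\delta)$-independent estimates of the nonlinear problem carried out in Section~\ref{exist} (Propositions~\ref{prop31}--\ref{prothe} and Theorem~\ref{th43}) to the linearized $(\kep,\delta)$-system \eqref{lkeproblem}, while tracking the dependence on the regularization parameter $\eps$. Since $\tilde\eta,\tilde q,\tilde\theta$ are now prescribed with $\GG[\tilde\eta,\tilde q,\tilde\theta](\tilde T)<\infty$, the commutators involving coefficients are a priori under control, which plays the role of $\MCF$ in Section~\ref{exist}. The key observation is that the $\eps$-modifications were designed precisely to respect the Euler--Fourier coupling: the extra term $\eps\tfrac{R}{c_v}\dt(\dakt\theta)$ in the first equation and $-\eps\dt(\dakt\theta)$ in the third equation exactly cancel in the combination leading to \eqref{q10k}--\eqref{qeqt0}, so the continuity/wave identity used in \eqref{efaab} survives intact. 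The linearity of the system also means one can close the final bound by Gronwall rather than by a short-time bootstrap.

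First, I will derive the highest-order temporal energy identity, mimicking Proposition~\ref{prop31}. Applying $\dt^4$ to the momentum and temperature equations of \eqref{lkeproblem} and testing respectively with $\jkt\dt^4 v$ and $\tfrac{\jkt}{\tq}\dt^3\bigl(\tfrac{\tq}{\tth}\dt\theta\bigr)$, then combining via $\dt^4$ of the modified continuity equation, yields the analog of \eqref{keses}. Because the $\eps$-contributions from the first and third equations enter only through the combined quantity $\dt^4 q-\dt^3(\tq/\tth\,\dt\theta)$, they cancel up to a lower-order remainder, and no $\eps^{-1}$ singularity appears. Furthermore, the dissipative term $-\eps\dt(\dakt\theta)$ integrated against $\tfrac{\jkt}{\tq}\dt^3(\tq/\tth\,\dt\theta)$ produces the additional nonnegative piece $\tfrac{\eps}{2}\dtt\int \tfrac{\jkt}{\tth}|\nakt\dt^4\theta|^2$, plus commutator terms bounded by $\eps^{1/2}$ times the $\eps$-weighted norms in \eqref{fgedef2} (with the remaining $\eps^{1/2}$ absorbed by Cauchy's inequality). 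The same analysis applied to lower-order temporal derivatives provides control of $\eps^2\sum_{j=0}^4\norm{\dt^j\theta}_{6-j}^2$ uniformly in $\eps$.

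Next, I will derive the tangential horizontal estimates for $(\bp^4 q,\bp^4 v)$ following Proposition~\ref{ppro1}, using the Alinhac good unknowns $\mathcal{V}=\Delta_\ast^2 v-\Delta_\ast^2\tilde\eta^\kappa\cdot\nakt v$ and $\mathcal{Q}=\Delta_\ast^2 q-\Delta_\ast^2\tilde\eta^\kappa\cdot\nakt q$. The Taylor sign condition is inherited from $\dt^{j}\tilde q(0)=\dt^{j}q^{(\kd)}(0)$ via \eqref{haha0} and preserved on $[0,\tilde T]$ by finiteness of $\GG[\tilde\eta,\tilde q,\tilde\theta](\tilde T)$, so that the boundary integral $\mathcal I$ produces the positive term $\tfrac12\dtt\int_\Sigma(-\nabla\tilde q\cdot N)\jkt|\Delta_\ast^2\Lambda_\kappa\tilde\eta\cdot\nkt|^2$. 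The extra $\eps$-term in the continuity equation contributes, through integration by parts, a boundary/interior term bounded by $\eps\norm{\dt\Delta_\ast^2\dakt\theta}_{0}\norm{\mathcal Q}_0$, which is controlled by the $\eps$-weighted portion of $\GE^\eps$. The elliptic regularity estimates for $\dt^j q$ and $\dt^j v$ proceed exactly as in Propositions~\ref{f1b}--\ref{f1} using the acoustic wave equation \eqref{qeqt0} (with an additional source $\tfrac{R}{c_v}\dt\Xi^\delta$ and the smooth corrector $\chi^\eps$) and the Hodge decomposition; because the right-hand side of \eqref{qeqt0} involves only $\dt\dakt\theta$ rather than $\dt^2\dakt\theta$, the time-differentiated estimates of $\dt^jq$ remain $\eps$-uniform.

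The main obstacle lies in the elliptic estimates for $\theta$. For $\eps>0$ fixed, the third equation of \eqref{lkeproblem} is of pseudo-parabolic type; applying $\dt^j$ for $j=0,\ldots,3$ and invoking Proposition~\ref{pseq1k22hh} turns it into a time-integrated elliptic problem for $\tfrac{c_v\tq}{R\tth}\dt^j\theta-\eps\dakt\dt^j\theta$ with inhomogeneous Neumann data coming from the commutator $[\dt^j,\nkt\cdot\nakt]\theta$, hence the elliptic bound on $\dt^j\theta$ scales like $\norm{\dt^j\theta}_{5-j}^2+\eps^2\norm{\dt^j\theta}_{6-j}^2\ls\|\text{forcing}\|_{3-j}^2+\eps^2\|\text{forcing}\|_{4-j}^2$. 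The delicate point is that the forcing contains $\tq\divakt\dt^jv$, whose $\eps$-weighted $H^{4-j}$ norm needs the $\eps^2$-weighted part of $\GE^\eps$ already established for $v$; this is obtained by differentiating the momentum equation once more in time, bounding $\eps\|\dt^{j+1}v\|_{4-j}$ via the pressure estimate, which closes the circle thanks to the $\eps$-independent control of $\dt^j q$. Combining all of the above with Propositions~\ref{covrem}--\ref{covremk} to treat initial data, one arrives at a linear differential inequality
\begin{equation*}
\GE^\eps(t)+\int_0^t \D\;\leq\; \tilde M_0^{\kd}+C(\tilde\eta,\tilde q,\tilde\theta)\int_0^t \GE^\eps,
\end{equation*}
from which Gronwall's inequality delivers \eqref{boundeps} on $[0,\tilde T]$, the smallness of $\tilde\eps$ being needed only to ensure that the strict positivity conditions inherited from the initial data remain valid during the construction in Section~\ref{sec4.2}.
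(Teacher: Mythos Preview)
Your proposal has several substantive gaps compared to the paper's proof, stemming from importing the nonlinear strategy of Section~\ref{exist} without adapting to the linearized $\eps$-regularized setting.

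\textbf{Tangential estimate.} The paper explicitly notes that since $\kappa>0$ is fixed and constants may depend on $\kappa$, \emph{Alinhac good unknowns are not used} for $\bp^4 v$: the commutators $[\bp^4,\akt]\nabla q$ are estimated directly because $\akt=\a^\kappa(\tilde\eta)$ is as smooth as one likes (at the cost of powers of $\kappa$). Your proposed good-unknown route fails here for a structural reason: in the linearized problem $\tilde\eta$ is \emph{given}, so there is no relation $v=\dt\tilde\eta-\psi^\kappa$ to turn the boundary integral into a time derivative, and the term you exhibit, $\tfrac12\dtt\int_\Sigma(-\nabla\tilde q\cdot N)\jkt|\Delta_\ast^2\Lambda_\kappa\tilde\eta\cdot\nkt|^2$, controls a \emph{known} quantity, not an unknown. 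Moreover the Taylor sign condition you invoke is on the unknown $q$, not on $\tilde q$, and is not available a priori.

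\textbf{Energy multipliers.} The paper treats $\pa^4=\bp^4$ and $\dt^4$ simultaneously and tests the $\pa^4$-temperature equation against $\tfrac{\jkt}{\tq}\bigl(\tfrac{\tq}{\tth}\pa^4\theta-\eps\tfrac{R}{c_v}\pa^4(\dakt\theta)\bigr)$, \emph{not} against $\tfrac{\jkt}{\tq}\dt^3(\tfrac{\tq}{\tth}\dt\theta)$ as in Proposition~\ref{prop31}. This $\eps$-adapted multiplier is what makes the time-derivative terms a perfect square and makes the continuity combination $\pa^4 q-\tfrac{\tq}{\tth}\pa^4\theta+\eps\tfrac{R}{c_v}\pa^4(\dakt\theta)$ appear, yielding \eqref{kesesee} and the additional controls of $\eps\|\pa^4\theta\|_1^2$ and $\eps^2\|\pa^4(\dakt\theta)\|_0^2$. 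Your multiplier does not produce this structure cleanly.

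\textbf{Elliptic $\theta$-estimate.} Rather than the pseudo-parabolic route you outline, the paper combines the first and third equations to obtain the \emph{scalar ODE in time} \eqref{thes} for $\Delta_\akt\theta$, solves it explicitly via the integrating factor $e^{-\frac{\mu R}{\eps(c_v+R)}t}$ (see \eqref{there}), and thereby bounds $\|\dt^j(\Delta_\akt\theta)\|_{3-j}$ and $\eps^2\|\dt^j(\Delta_\akt\theta)\|_{4-j}$ uniformly. This sidesteps the circular dependence on $\eps\|\dt^{j+1}v\|_{4-j}$ that you flag as delicate.

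\textbf{Role of $\tilde\eps$.} The paper's closing inequality is $\GG^\eps(T)\le P(\sup\TF)\bigl(M_0^\kd+(T+\eps)P(\sup\TK)\GG^\eps(T)\bigr)$, so smallness of $\eps$ is required to \emph{absorb the right-hand side}, not merely for positivity as you assert. A pure Gronwall closure on $[0,\tilde T]$ is not what the argument delivers.
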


\begin{proof}
Similarly as that of Theorem \ref{th43}, we will divide the proof into two main steps. In the below, the constants $C$ and polynomials $P$ are allowed to depend additionally on $\kappa$.

{\bf Step 1: Energy evolution estimates.}

Since now $\kappa>0$ is fixed and the estimates are allowed to depend on $\kappa$, differently from Proposition \ref{ppro1}, here we do not need to use Alinhac good unknowns when estimating for $\bp^4 v$, and also we can estimate for the energy evolution of $\bp^4 \theta$.
Let us denote $\pa^4=\bp^4$ or $\dt^4$. Applying $\pa^4$  to the second equation in \eqref{lkeproblem} and then taking the $L^2(\Omega)$ inner product   with $\tilde J^\kappa   \pa^4v$, by integrating by parts over $\Omega$, we deduce
\begin{align}\label{efv1}
 &\hal \dfrac{d}{dt}\int_\Omega  \dfrac{\tilde J^\kappa \tq }{R\tth} \abs{\pa^4v}^2- \int_\Omega \tilde J^\kappa     \pa^4q  \divakt\pa^4v
 \nonumber
\\&\quad
 = \hal  \int_\Omega \dt\(    \dfrac{\tilde J^\kappa  \tq }{R\tth}\) \abs{\pa^4v}^2
 -\int_\Omega \tilde J^\kappa   \(\[ \pa^4,\trho\] \dt v+ \left[\pa^4,  \akt\right]  \nabla   q\)\cdot  \pa^4v\nonumber
\\&\quad  \le    P(\TK)  \GE^\eps,
\end{align}
where we have denoted for simplicity $\TK:=\GE[\tilde\eta,\tilde   q, \tilde\theta]$ and $\GE^\eps:=\GE^\eps[q,v,\theta]$.
On the other hand, applying $\pa^4$ to the third equation in \eqref{lkeproblem} and then taking the $L^2(\Omega)$ inner product   with $\frac{\tilde J^\kappa}{\tq}  \( \frac{ \tq}{ \tth}\pa^4\theta -\eps\frac{R }{c_v }\pa^4\(\dakt\theta\)\)$, we obtain
\begin{align}\label{qre1bc}
  &\hal \dfrac{d}{dt}\int_\Omega   \frac{c_v\tilde J^\kappa   }{R\tq} \abs{ \frac{ \tq}{ \tth}\pa^4\theta -\eps\frac{R }{c_v }\pa^4\(\dakt\theta\)}^2\nonumber+\int_\Omega  \tilde J^\kappa      \divakt \pa^4v\( \frac{ \tq}{ \tth}\pa^4\theta -\eps\frac{R }{c_v }\pa^4\(\dakt\theta\)\)
   \nonumber
   \\&\quad-   \int_\Omega  \frac{\mu \tilde J^\kappa}{\tth}  \pa^4\(\Delta_\akt   \theta\) \pa^4  \theta  +  \int_\Omega \eps \frac{ \mu R \tilde J^\kappa}{c_v  \tq}  \abs{\pa^4\(\Delta_\akt   \theta\) }^2
   \nonumber
   \\&\quad=\hal\int_\Omega \dt\(\frac{c_v\tilde J^\kappa   }{R\tq}\) \abs{ \frac{ \tq}{ \tth}\pa^4\theta -\eps\frac{R }{c_v }\pa^4\(\dakt\theta\)}^2 \nonumber\\&\quad\quad+\int_\Omega   \frac{\tilde J^\kappa}{\tq}\! \(\dt\(\frac{ c_v\tq}{R \tth}\)\pa^4\theta-\[\pa^4,\frac{c_v  \tq  }{R\tth}\]\dt\theta - \left[\pa^4, \tq\divakt\right]v+  \pa^4\Xi^\delta  \)\!\!\( \frac{ \tq}{ \tth}\pa^4\theta -\eps\frac{R }{c_v }\pa^4\(\dakt\theta\)\!\)
      \nonumber\\&\quad \le    P(\TK)\GE^\eps+ \ns{\pa^4\Xi^\delta }_0 .
  \end{align}
Similarly as \eqref{kes12}, we have
\begin{align}\label{qre2}
-   \int_\Omega  \frac{\mu \tilde J^\kappa}{\tth}  \pa^4\(\Delta_\akt  \theta\) \pa^4  \theta
 \ge\hal\int_\Omega   \frac{ \mu \tilde J^\kappa}{\tth}   \abs{\nabla_{\akt} \pa^4   \theta}^2 - P(\TK)  \GE^\eps.
\end{align}

We now combine the two remaining terms in \eqref{efv1} and \eqref{qre1bc} to have a cancelation due to the coupling. To this end,
applying $\pa^4$  to the first equation in \eqref{lkeproblem}, we obtain
\begin{align}\label{qqqeq44}
&\dt \(\pa^4 q-   \frac{ \tq}{ \tth}\pa^4\theta  +\eps\frac{R }{c_v }  \pa^4\(\dakt\theta\)\)+\tq \divakt \pa^4 v
\nonumber
\\&\quad=-\dt\(\frac{ \tq}{ \tth}\)\pa^4\theta+\[\pa^4,\frac{ \tq}{ \tth}\]\dt\theta-\[\pa^4,\tq \divakt\] v+\pa^4\chi^\eps.
\end{align}
By using \eqref{qqqeq44}, we deduce
\begin{align}
\label{efaa}
& - \int_\Omega \tilde J^\kappa      \pa^4 q  \divakt\pa^4v+\int_\Omega  \tilde J^\kappa      \divakt \pa^4v\( \frac{ \tq}{ \tth}\pa^4\theta -\eps\frac{R }{c_v }\pa^4\(\dakt\theta\)\)\nonumber
 \\ &\quad=    \int_\Omega \frac{\tilde J^\kappa}{\tq}     \(\pa^4 q-\frac{ \tq}{ \tth}\pa^4\theta +\eps\frac{R }{c_v }\pa^4\(\dakt\theta\)\) \times\(\dt \(\pa^4 q-   \frac{ \tq}{ \tth}\pa^4\theta  +\eps\frac{R }{c_v }  \pa^4\(\dakt\theta\)\)\right. \nonumber
 \\ &\qquad \qquad\qquad\qquad\qquad\qquad\qquad\qquad  \left.+\dt\(\frac{ \tq}{ \tth}\)\pa^4\theta-\[\pa^4,\frac{ \tq}{ \tth}\]\dt\theta +\[\pa^4,\tq \divakt\] v-\pa^4\chi^\eps\)\nonumber
 \\&\quad\ge\hal\dtt  \int_\Omega  \frac{\tilde J^\kappa}{\tq}     \abs{ \pa^4 q-\frac{ \tq}{ \tth}\pa^4\theta +\eps\frac{R }{c_v }\pa^4\(\dakt\theta\)}^2
  -P(\TK) \GE^\eps -\ns{\pa^4\chi^\eps}_0.
\end{align}

Consequently, by \eqref{efv1}--\eqref{qre2} and \eqref{efaa}, we obtain
\begin{align}\label{kesesee}
 &\hal\dfrac{d}{dt}\int_\Omega\( \frac{\tilde J^\kappa \tq }{R\tth}\abs{\pa^4v}^2
 +   \frac{c_v\tilde J^\kappa   }{R\tq} \abs{\frac{ \tq}{ \tth}\pa^4\theta -\eps\frac{R }{c_v }\pa^4\(\dakt\theta\)}^2 +   \frac{\tilde J^\kappa}{\tq}     \abs{ \pa^4 q-\frac{ \tq}{ \tth}\pa^4\theta +\eps\frac{R }{c_v }\pa^4\(\dakt\theta\) }^2\)\nonumber
 \\&\quad+\hal\int_\Omega   \frac{ \mu \tilde J^\kappa}{\tth}   \abs{\nabla_{\akt} \pa^4   \theta}^2+   \int_\Omega  \frac{\eps \mu R \tilde J^\kappa}{c_v  \tq}  \abs{\pa^4\(\Delta_\akte  \theta\) }^2
\nonumber
 \\&\quad\le    P(\TK)  \GE^\eps+  \ns{\pa^4\Xi^\delta }_0+\ns{\pa^4\chi^\eps}_0 .
\end{align}
By the squaring and \eqref{qre2}, we have
\begin{align}
  &\hal  \int_\Omega   \frac{c_v\tilde J^\kappa   }{R\tq} \abs{\frac{ \tq}{ \tth}\pa^4\theta -\eps\frac{R }{c_v }\pa^4\(\dakt\theta\)}^2\nonumber\\&\quad=\hal  \int_\Omega   \(\frac{c_v\tilde J^\kappa  \tq  }{R\tth^2} \abs{ \pa^4\theta }^2+\frac{\eps^2 R \tilde J^\kappa }{c_v \tq }\abs{\pa^4\(\dakt\theta\)}^2\)- \int_\Omega \frac{\eps\tilde J^\kappa}{ \tth}\pa^4\theta \pa^4\(\dakt\theta\)\nonumber
  \\&\quad\ge \hal  \int_\Omega   \(\frac{c_v\tilde J^\kappa  \tq  }{R\tth^2} \abs{ \pa^4\theta }^2 +\frac{\eps^2 R \tilde J^\kappa }{c_v \tq }\abs{\pa^4\(\dakt\theta\)}^2+ \frac{\eps\tilde J^\kappa}{\tth}  \abs{\nabla_\akt\pa^4\theta}^2\) - \eps P(\TK) \GE^\eps.
  \label{ifeoq2}
\end{align}
Thus, integrating \eqref{kesesee} in time yields that for $0<T\le \tilde T$,
\begin{align}\label{tv1}
 & \norm{\( \pa^4 q,\pa^4 v,\pa^4 \theta\) }_{0}^2+\eps\norm{ \pa^4  \theta   }_{1}^2+\eps^2\norm{ \pa^4  \(\dakt\theta\)   }_{0}^2+
 \int_0^T \(\norm{ \pa^4  \theta   }_{1}^2+ \eps\norm{ \pa^4  \(\dakt\theta\)   }_{0}^2\)
 \nonumber\\&\quad\le   M_0^\kd+ \(T+  \eps\)      P\(\sup_{[0,T]}\TK\) \sup_{[0,T]}\GE^\eps .
\end{align}

{\bf Step 2: Elliptic estimates.}

We now use the structure of the equations  in \eqref{lkeproblem} to deduce the rest of the estimates.
It is noted that differently from the proof of Theorem \ref{th43}, here we need to derive the estimates of $q$ and $\theta$ together due to the presence of the $\eps  \dt\(\Delta_\akt \theta\)$ terms.

First, similarly as \eqref{fe1}, by the second equation of \eqref{lkeproblem} and \eqref{tv1}, we have
  \beq\label{fe1ea}
    \ns{\nabla\dt^3 q}_0 \le  P\(\TF\)\( \ns{  \dt^4 v}_0+\GF  \),
    \eeq
where we have denoted by $\TF:=\mathcal{F}[\tilde\eta,\tilde   q,  \tilde\theta]$ and $\GF:= \mathcal{F}[q,v,\theta]$ with $\mathcal{F}$ defined by \eqref{fdef}.
Next, we estimate $\dt^j q$, $j=0,1,2$. It follows from the first and third equations in \eqref{lkeproblem} that
  \beq\label{fe1eagg}
  \dt  q +\(1+\frac{R}{c_v}\)\tq  \divakt v = \frac{\mu R}{c_v}\Delta_\akt\theta+\frac{  R}{c_v}  \Xi^\delta+\chi^\eps.
  \eeq
  This together with the second equation  in \eqref{lkeproblem} yields
    \begin{align}\label{gd11}
    &\dt^2 q -\(1+\frac{R}{c_v}\)\divakt\( R\tth \nakt q\) \nonumber
  \\&\quad= \frac{\mu R}{c_v}\dt\(\dakt\theta\)+\(1+\frac{R}{c_v}\)\(\nakt \tq\cdot\dt v-\[\dt,\tq\divakt\]v\)+\frac{  R}{c_v}\dt \Xi^\delta +\dt\chi^\eps
  \end{align}
Apply $\dt^j$ to \eqref{gd11} to find
\begin{equation}
-\(1+\frac{R}{c_v}\)\divakt\( R\tth \nakt \dt^j q\)=-  \dt^{j+2} q+ \frac{\mu R}{c_v}\dt^{j+1}\(\dakt\theta\)+\tilde{\mathfrak{f}}^{\eps,j},
\end{equation}
where
\begin{align}
\tilde{\mathfrak{f}}^{\eps,j}:=&\(1+\frac{R}{c_v}\)\(\[\dt^j,\divakt\] \(R\tth \nakt  q\)+\divakt\( \[\dt^j,R\tth \nakt\]  q\)\) \nonumber
\\&+\(1+\frac{R}{c_v}\)\dt^j\(\nakt \tq\cdot\dt v-\[\dt,\tq\divakt\]v\)+ \frac{  R}{c_v}\dt^{j+1} \Xi^\delta+\dt^{j+1} \chi^\eps .
\end{align}
By the elliptic estimates, we have
\begin{align}\label{qth5}
\ns{ \nabla \dt^j q }_{3-j}&\le   P\(\ns{ \tilde\eta^\kappa }_4,\ns{ \tilde\theta}_3\)  \ns{-  \dt^{j+2} q+ \frac{\mu R}{c_v}\dt^{j+1}\(\dakt\theta\)+\tilde{\mathfrak{f}}^{\eps,j}}_{2-j}
 \nonumber\\& \le  P\(\TF\)\( \ns{ \dt^{j+2}q}_{2-j}+\ns{ \dt^{j+1}\(\Delta_\akt \theta\)}_{2-j}+\GF+  M_0^\kd \) .
\end{align}

To close the estimate \eqref{qth5}, we then turn to estimate for $\Delta_\akt \theta$. For this, we deduce from the first and third equations in \eqref{lkeproblem} that
\beq
    \label{thes}
    \frac{\mu R }{c_v+R}  \Delta_\akt \theta+\eps\dt\(\Delta_\akt \theta\) =\frac{c_v \tq}{R\tth}\partial_t\theta  -\frac{c_v}{c_v+R} \dt q -\frac{  R }{c_v+R} \Xi^\delta +\frac{c_v}{c_v+R}   \chi^\eps .
    \eeq
We can solve this ODE (in time) for $\Delta_\akt \theta$ as
    \begin{align}\label{there}
\Delta_\akt \theta(t)&=e^{-\frac{\mu R }{\eps(c_v+R)} t}\Delta_{\a_0^{\kappa,\delta}}   \theta_{0}^{\kappa,\delta}
\nonumber\\&\quad+\int_0^te^{-\frac{\mu R }{\eps(c_v+R)} (t- \tau)}\frac{1}{\eps}\(\frac{c_v \tq}{R\tth}\partial_t\theta  -\frac{c_v}{c_v+R} \dt q -\frac{  R }{c_v+R} \Xi^\delta+\frac{c_v}{c_v+R}   \chi^\eps \)(\tau)d\tau.
 \end{align}
 It then follows from \eqref{there} that
 \begin{align}\label{qth600}
   \ns{ \Delta_\akt \theta }_{3}
  \ls   M_0^\kd+\sup_{[0,T]}\ns{ \frac{ \tq}{ \tth}\partial_t\theta     }_{3 }+\sup_{[0,T]}\ns{ \dt q}_{3 } .
 \end{align}
Similarly, for $j=0,1,2,3$,  we apply $\dt^j$ to \eqref{thes} to find
\beq\label{qth6}
   \ns{ \dt^{j}\(\Delta_\akt \theta\)}_{3-j}
  \ls   M_0^\kd+\sup_{[0,T]}\ns{\dt^{j}\(\frac{ \tq}{ \tth}\partial_t\theta\)    }_{3-j}+\sup_{[0,T]}\ns{ \dt^{j+1}q}_{3-j} .
 \eeq
 Then for $j=0,1,2,$ \eqref{qth5} and \eqref{qth6} with $j$ replaced by $j+1$ imply
 \begin{align}\label{qth55}
\ns{   \dt^j q }_{4-j} \le    P\(\TF\)\(   M_0^\kd+\sup_{[0,T]}\ns{\dt^{j+1}\(\frac{ \tq}{ \tth}\partial_t\theta\)    }_{2-j}+\sup_{[0,T]}\ns{ \dt^{j+2}q}_{2-j}+\GF  \) .
\end{align}
A simple induction argument on \eqref{qth55} together with \eqref{tv1}, \eqref{fe1ea} and the fundamental theorem of calculus implies
\begin{align}\label{qth5566}
 \sum_{j=0}^4\ns{  \dt^j q}_{4-j}
&\le P\(\sup_{[0,T]}\TF\)\(   M_0^\kd+\sup_{[0,T]}\ns{\dt^{4} \theta  }_{0}+\sup_{[0,T]}\sum_{j=3}^4\ns{ \dt^{j}q}_{4-j}+\GF \)
\nonumber
\\&\le P\(\sup_{[0,T]}\TF\)\(  M_0^\kd+ \(T+  \eps\) P\(\sup_{[0,T]}\TK\)  \GG^\eps(T) \).
 \end{align}
This in turn together with \eqref{qth6} for $j=0,1,2,3$ and using \eqref{thes} yields
\begin{align}\label{there1200}
& \sum_{j=0}^3\ns{ \dt^{j}\(\Delta_\akt \theta\)}_{3-j}+ \eps^2\sum_{j=1}^4\ns{ \dt^{j}\(\Delta_\akt \theta\)}_{4-j}
 \nonumber
 \\&\quad\le P\(\sup_{[0,T]}\TF\)\(  M_0^\kd+ \(T+  \eps\) P\(\sup_{[0,T]}\TK\) \GG^\eps(T) \).
 \end{align}
Furthermore,   note that
\begin{align}\label{there12}
 -\int_0^te^{-\frac{\mu R }{\eps(c_v+R)} (t- \tau)}   \dt q  (\tau)d\tau
 =   e^{-\frac{\mu R }{\eps(c_v+R)}t}q_0^\delta-q(t) +\int_0^te^{-\frac{\mu R }{\eps(c_v+R)} (t- \tau)} \frac{\mu R  }{\eps (c_v+R) }  q  (\tau)d\tau.
 \end{align}
 It then follow from plugging \eqref{there12} into \eqref{there} that, by using \eqref{qth5566},
\begin{align}\label{qth556677}
 \eps^2\ns{ \Delta_\akt \theta}_4&\ls \eps^2   M_0^\kd+ \eps^2\sup_{[0,T]}\ns{\frac{ \tq}{ \tth}\dt\theta}_4+\sup_{[0,T]}\ns{q}_4
 \nonumber\\&\le P\(\sup_{[0,T]}\TF\)\(  M_0^\kd+ \(T+  \eps\) P\(\sup_{[0,T]}\TK\)  \GG^\eps(T) \).
 \end{align}

Now, we write
\beq
\Delta_\akt \dt^{j} \theta = \dt^{j}\(\Delta_\akt \theta\)-\[\dt^{j},\Delta_\akt\] \theta
\eeq
and recall that
\beq
\nabla_\akt \dt^j\theta\cdot\nkt = -\[\dt^j,(\nkt)^T\akt\]\nabla \theta\text{ on }\Sigma.
\eeq
By the elliptic estimates, we deduce that for $j=0,1,2,3,$
\begin{align}
 \ns{ \dt^{j} \theta }_{5-j} &\le     P\(\ns{ \tilde\eta^\kappa }_5\) \(\ns{\dt^{j}\(\Delta_\akt \theta\) }_{3-j}+ \ns{ \[\dt^{j},\Delta_\akt\] \theta}_{3-j}+\abs{\[\dt^j,(\nkt)^T\akt\]\nabla \theta}_{7/2-j}^2\)
 \nonumber\\&\le P\( \TF\)\(  \ns{\dt^{j}\(\Delta_\akt \theta\) }_{3-j}+  \GF \)
\end{align}
and for $j=0,1,2,3,4,$
\begin{align}
 \eps^2 \ns{ \dt^{j}  \theta }_{6-j}&\le     P\(\ns{ \tilde\eta^\kappa }_6\) \eps^2\!\(\ns{\dt^{j}\(\Delta_\akt \theta\) }_{4-j}+ \ns{ \[\dt^{j},\Delta_\akt\] \theta}_{4-j}+\abs{\[\dt^j,(\nkt)^T\akt\]\nabla \theta}_{9/2-j}^2\!\)
 \nonumber\\&\le P\( \TF\) \(\eps^2\ns{\dt^{j}\(\Delta_\akt \theta\) }_{4-j}+ \eps^2 \TK\GE^\eps\).
\end{align}
These together with \eqref{there1200} and \eqref{qth556677} yield
\begin{align}\label{tv2}
   \sum_{j=0}^3\norm{\dt^j\theta}_{5-j}^2  +\eps^2\sum_{j=0}^4\norm{ \dt^j \theta }_{6-j}^2
 \le P\(\sup_{[0,T]}\TF\)\(  M_0^\kd+ \(T+  \eps\) P\(\sup_{[0,T]}\TK\) \GG^\eps(T) \).
\end{align}

Finally, we derive the estimates of $v$. Similarly as \eqref{ves11}, by  the second equation in \eqref{lkeproblem}, we have that for $j=1,2,3,$
\begin{align}\label{ves11e}
\norm{ \dt^j v }_{4-j}^2 \leq  P\(\TF\)\(\ns{\dt^{j-1}q}_{5-j} +\GF\) .
\end{align}
On the other hand, we use the  Hodge-type estimates to estimate $\norm{v}_4^2$. Similarly as \eqref{cures}, it follows from  taking $\curl_{\akt}$ of the second equation in \eqref{lkeproblem} and the fundamental theorem of calculus that
\begin{align}\label{curese}
\norm{\curl_\akt v}_3 &\le M_0+\int_0^T\norm{\curl_{\dt\akt} v+\frac{R\tth}{\tq}\[ \curl_\akt,\dfrac{\tq}{R\tth}\]\dt v }_3\nonumber
\\&\leq M_0+T P\(\sup_{[0,T]}\TK \)\sup_{[0,T]}\GE^\eps  .
\end{align}
By \eqref{fe1eagg} and \eqref{tv2}, we have
\begin{align}\label{divese}
\norm{\Div_\akt   v}_3^2&= \norm{ \dfrac{c_v}{(c_v+{R})\tq } \( -\dt  q+  \frac{\mu R}{c_v}\Delta_\akt\theta+\frac{  R}{c_v}  \Xi^\delta+\chi^\eps\)}_3^2
\nonumber
\\&\le P\( \TF \)  \(\ns{\dt  q}_3 +\ns{  \theta}_5+M_0^\kd\) .
\end{align}
Recalling that the Hodge-type estimates imply
\begin{align}\label{ves12e}
\norm{v}_4^2 \le P\( \TF \)\( \norm{v}_0^2+\norm{\bp^4 v}_0^2+\norm{\curl_\akt  v}_3^2+\norm{\Div_\akt    v}_3^2\)  .
\end{align}
This together with \eqref{tv1}, \eqref{tv2}, \eqref{curese} and \eqref{divese} and \eqref{ves11e} for $j=1,2,3$ implies
\begin{align}\label{tv3}
\sum_{j=0}^4\norm{ \dt^j v }_{4-j}^2\le P\(\sup_{[0,T]}\TF\)\(  M_0^\kd+ \(T+  \eps\) P\(\sup_{[0,T]}\TK\)  \GG^\eps(T) \).
\end{align}

{\bf Uniform-in-$\eps$ estimates.}

Now we can conclude the $\eps$-independent estimates.
For $0<T\le \tilde T$, by   \eqref{qth5566}, \eqref{tv2} and \eqref{tv3}, we conclude
\beq\label{concl1e}
\GG^\eps(T) \le P\(\sup_{[0,T]}\TF\)\(  M_0^\kd+ \(T+  \eps\) P\(\sup_{[0,T]}\TK\)  \GG^\eps(T) \).
\eeq
From \eqref{concl1e}, there is an $\tilde\eps>0$ and a $  T>0$, depending on  $(\tilde\eta,\tilde q, \tilde \theta)$ (and $\kappa$)  but not on $\eps$, such that for $0<\eps\le\tilde\eps$,
\begin{equation}
\GG^\eps(  T)\leq 2 P\(\sup_{[0,T]}\TF\)   M_0^\kd.
\end{equation}
This yields \eqref{boundeps} as $T$ does not depend on $  M_0^\kd$.
\end{proof}

\subsection{Solvability of \eqref{lkeproblem}}\label{sec4.2}

The aim of this subsection is to construct the solutions to \eqref{lkeproblem} for each fixed $\eps>0$ (and fixed $\kappa,\delta>0$), which is stated as the following theorem.

\begin{theorem} \label{th43epses}
There exists a unique solution $(q,v,\theta)$ to \eqref{lkeproblem} on $[0,\tilde T]$, which satisfies
\begin{equation}
\label{boundka}
\sup_{[0,\tilde T]}\left\{
   \sum_{j=0}^4\norm{ \(\dt^j q , \dt^j v\) }_{4-j}^2+ \sum_{j=0}^4  \ns{\dt^j \theta}_{6-j} \right\}   \le   \tilde M_0^{\kd,\eps},
\end{equation}
where $\tilde M_0^{\kd,\eps}$ is a constant depending on $\tilde M_0^{\kd}$ and $\eps$.
\end{theorem}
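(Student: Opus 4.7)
The construction exploits that, for fixed $\varepsilon>0$, the system \eqref{lkeproblem} decouples naturally into a linear hyperbolic Euler-type subsystem for $(q,v)$ (with $\theta$ treated as a source) and a pseudo-parabolic subsystem for $\theta$ (with $v$ treated as a source). My plan is a Picard iteration alternating between these two subsystems, closed by a contraction in a low-regularity norm and upgraded to the high-regularity bound \eqref{boundka} via the estimates already derived in Section 4.3.

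\textbf{Step 1 (iteration scheme).} Starting from a seed $(\theta^{(0)})$, I would construct $(q^{(n+1)},v^{(n+1)})$ from $\theta^{(n)}$ by solving the linear hyperbolic subsystem
\begin{equation*}
\dt q^{(n+1)}+\tq\,\divakt v^{(n+1)}=\mathcal{F}[\theta^{(n)}],\qquad \tfrac{\tq}{R\tth}\dt v^{(n+1)}+\nakt q^{(n+1)}=0,
\end{equation*}
with $q^{(n+1)}=\bar p$ on $\Sigma$, where $\mathcal{F}[\theta^{(n)}]=\tfrac{\tq}{\tth}\dt\theta^{(n)}-\varepsilon\tfrac{R}{c_v}\dt(\dakt\theta^{(n)})+\chi^\varepsilon$. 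Following the derivation of \eqref{qeqt0}, this is equivalent to a linear wave equation for $q^{(n+1)}$ with smooth time-dependent coefficients and homogeneous Dirichlet boundary datum $q^{(n+1)}-\bar p=0$, after which $v^{(n+1)}$ is recovered from the momentum equation. Standard Galerkin/energy theory for linear wave equations with variable coefficients yields existence in the required Sobolev class; the compatibility conditions \eqref{inc3l} (engineered by the choice \eqref{chchi1} of $\chi^\varepsilon$) ensure that the iterates are smooth up to $t=0$ through the relevant order.

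\textbf{Step 2 (pseudo-parabolic part).} Given $v^{(n+1)}$, I would solve
\begin{equation*}
\tfrac{c_v\tq}{R\tth}\dt\theta^{(n+1)}-\varepsilon\dt(\dakt\theta^{(n+1)})=\mu\dakt\theta^{(n+1)}-\tq\,\divakt v^{(n+1)}+\Xi^\delta,
\end{equation*}
with Neumann-type condition $\nakt\theta^{(n+1)}\cdot\tilde\n^\kappa=0$ on $\Sigma$. By Proposition \ref{pseq1k22hh}, this is equivalent to the integro-elliptic problem
\begin{equation*}
M^\varepsilon(t)\,\theta^{(n+1)}(t)=\tfrac{c_v q_0^\delta}{R}-\varepsilon\Delta_{\a_0^{\kappa,\delta}}\theta_0^{\kappa,\delta}+\int_0^t\!\bigl(\cdots\bigr),
\end{equation*}
where $M^\varepsilon(t):=\tfrac{c_v\tq}{R\tth}-\varepsilon\dakt$ is a uniformly elliptic operator. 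Inverting $M^\varepsilon$ with its Neumann boundary condition converts this into a Volterra integral equation in a Hilbert space; a short-time Banach fixed-point argument produces $\theta^{(n+1)}$, and the estimate is then iterated in time to cover $[0,\tilde T]$ (the Volterra kernel has no smallness obstruction).

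\textbf{Step 3 (contraction and regularity).} For the map $\Phi:\theta^{(n)}\mapsto\theta^{(n+1)}$, I would estimate differences $\delta q^{(n)},\delta v^{(n)},\delta\theta^{(n)}$ (satisfying linear homogeneous-data versions of Steps 1--2) in a low-regularity norm such as $C([0,T^\ast];L^2)$: the wave-equation estimate for $\delta q^{(n+1)},\delta v^{(n+1)}$ loses a factor of $T^\ast$, and the pseudo-parabolic estimate for $\delta\theta^{(n+1)}$ loses $T^\ast$ (or a factor involving $\varepsilon$) by the Volterra structure. For $T^\ast$ small (depending on $\varepsilon$ and $\tilde M_0^{\kd}$), $\Phi$ is a contraction. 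The fixed point yields a local solution on $[0,T^\ast]$. The high-regularity bound \eqref{boundka} follows because the $\varepsilon$-independent bound of Theorem \ref{th43eps} applies to the iterates (they satisfy the same linearized equations used there), so the iterates are uniformly bounded in the strong norm and their limit lies in that class; finally, one extends to $[0,\tilde T]$ by iterating the short-time construction (the uniform bound on $\GG^\varepsilon$ prevents blow-up), and uniqueness is a routine energy estimate on the difference of two solutions.

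\textbf{Main obstacle.} The delicate point is high-regularity solvability of the linear hyperbolic subsystem in Step 1 with the boundary condition $q=\bar p$ but no condition on $v$: one must verify that the compatibility conditions of the iterated data propagate through the Picard scheme, which is precisely why $\chi^\varepsilon$ was introduced in \eqref{chchi1}. All the $\varepsilon$-dependent constants are acceptable here since $\varepsilon>0$ is fixed in this theorem; the delicate $\varepsilon$-independent analysis has already been carried out in Theorem \ref{th43eps}.
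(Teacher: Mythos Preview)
Your overall strategy—decouple \eqref{lkeproblem} into a hyperbolic $(q,v)$ subsystem and a pseudo-parabolic $\theta$ subsystem, and close by a fixed-point argument—is the paper's approach. But two points in your specific formulation are genuine gaps.

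\textbf{Source regularity in Step 1.} Your $\mathcal{F}[\theta^{(n)}]=\tfrac{\tq}{\tth}\dt\theta^{(n)}-\eps\tfrac{R}{c_v}\dt(\dakt\theta^{(n)})+\chi^\eps$ is read off the \emph{original} first equation of \eqref{lkeproblem}. The top-order energy estimate (apply $\dt^3$ to the wave equation, or $\dt^4$ to the first-order system, and test with $\dt^4 q$) then requires $\dt^4\mathcal{F}\in L^2$, hence $\dt^5\theta^{(n)}\in L^2$—one time derivative beyond the norm in \eqref{boundka}. The paper avoids this by basing the iteration on the \emph{rewritten} continuity equation \eqref{fe1eagg} (equivalently \eqref{q10k}), so that the wave equation \eqref{qeqtee} has source $\tfrac{\mu R}{c_v}\dt(\dakt\vartheta)$: the extra time derivative is traded for two spatial ones, and $\norm{\dakt\dt^4\vartheta}_0\lesssim\norm{\dt^4\vartheta}_2$ is exactly what the working norm $\B$ controls. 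Since this rewriting uses the temperature equation, the fixed point does not solve \eqref{lkeproblem} on the nose; the paper adds the verification step \eqref{ffal} to recover the original first and third equations. Relatedly, the paper iterates on the \emph{pair} $(w,\vartheta)$ rather than on $\theta$ alone, so that the velocity-dependent source in \eqref{qeqtee} consists only of the lower-order commutators $\nakt\tq\cdot\dt w-[\dt,\tq\divakt]w$ (needing at most $\norm{\dt^4 w}_0$), and a term of the form $\divakt\dt^4 v^{(n)}$ never appears.

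\textbf{High-norm bound on the iterates.} Your appeal to Theorem~\ref{th43eps} for the iterates is not valid: the cancellation \eqref{efaa} in that proof requires $(q,v,\theta)$ to satisfy the \emph{coupled} system \eqref{lkeproblem} with the same $\theta$ appearing in both the continuity and the temperature equation, whereas your iterates carry $\theta^{(n)}$ in the first and $\theta^{(n+1)}$ in the third. The paper instead derives the high-norm bound directly on the iterates from the decoupled structure—wave-equation estimates \eqref{qre1eeff}--\eqref{qeqtee2ff} for $q$, direct integration \eqref{jhgg2} for $v$, and the elliptic estimate on the integro-elliptic form \eqref{psg} for $\theta$—working in the slightly weakened norm $\B$ (with $\norm{v}_3$ in place of $\norm{v}_4$); the upgrade to $\norm{v}_4$ is performed only after the fixed point, via the curl/div structure.
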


\begin{proof}
In the below, the constants $C$ and polynomials $P$ are allowed to depend additionally on $\kappa$ and $\eps$.

We will employ a fixed-point argument to produce a solution to \eqref{lkeproblem}. It is crucial to work first in a slightly weaker norm:
\begin{align}
\B:=    \sum_{j=0}^4\norm{  \dt^j q  }_{4-j}^2 +\ns{v}_3 +\sum_{j=1}^4\norm{   \dt^j v  }_{4-j}^2+ \sum_{j=0}^4  \ns{\dt^j \theta}_{6-j} .
\end{align}
For $T>0$ and $M>0$ (to be determined later), denote
 \begin{equation}\label{metric_space_def}
	\mathbb{X}^M_T :=  \left\{ (w, \vartheta)\mid
\sup_{[0,T]}\B [w,\vartheta] <M ,\ (\dt^j w(0),\dt^j\vartheta(0) )=(\dt^j v^{(\eps)}(0),\dt^j \theta^{(\eps)}(0) ), \ j=0,1,2,3 \right\}.
\end{equation}
We define a mapping $\mathcal{M}:\mathbb{X}^M_T\rightarrow\mathbb{X}^M_T$ as $\mathcal{M}(w, \vartheta)=(v, \theta)$ with $(v, \theta)$ determined as follows.

First, given $(w, \vartheta)\in\mathbb{X}^M_T$,  we define $q$ as the solution to
\beq\label{qeqtee}
\begin{cases}
 \dt^2 q -\(1+\frac{R}{c_v}\)\divakt\( R\tth \nakt q\)
 \\\quad= \frac{\mu R}{c_v}\dt\(\dakt\vartheta\)+\(1+\frac{R}{c_v}\)\(\nakt \tq\cdot\dt w-\[\dt,\tq\divakt\]w\)+\frac{  R}{c_v}\dt \Xi^\delta+\dt\chi^\eps
 &\text{in }\Omega
\\ q=\bar{p} &\text{on }\Sigma
\\ (q(0),\dt q(0)) =(q_{0}^{\delta}, \dt q^{(\eps)}(0)).
\end{cases}
\eeq
We construct the initial data $\dt^j q(0)$, $j=2,3,4$, recursively by
\begin{align}
 \dt^j q(0): =& \(1+\frac{R}{c_v}\)\dt^{j-2}\(\divakt\( R\tth \nakt q\)
  \)(0)+ \frac{\mu R}{c_v}\dt^{j-1} \(\dakt\vartheta\) (0)
 \\&+\(1+\frac{R}{c_v}\)\dt^{j-2}\(\nakt \tq\cdot\dt w-\[\dt,\tq\divakt\]w\)(0)+\frac{  R}{c_v}\dt^{j-1} \Xi^\delta(0)+\dt^{j-1}\chi^\eps(0).\nonumber
\end{align}
It is direct to check that
\beq
\dt^j q(0)=\dt^j q^{(\eps)}(0),\ j=2,3,4,
\eeq
which imply in particular that the following third-order compatibility conditions of \eqref{qeqtee} hold:
\beq
\dt^j q (0)=\dt^j \bar{p}\text{ on }\Sigma,\ j=0,1,2,3.
\eeq
The linear wave equation \eqref{qeqtee} could be solved by using the Galerkin method \cite{Evans}, and we  focus only on the derivation of the estimates. For $j=0,1,2,3,$ applying $\dt^j$ to the equation in \eqref{qeqtee}, we have
\beq\label{qeqtee21}
\dt^{j+2} q -\(1+\frac{R}{c_v}\)\divakt\( R\tth \nakt \dt^j q\) =\tilde{\mathfrak{f}}^j,
\eeq
where
\begin{align}
\tilde{\mathfrak{f}}^j:=&\(1+\frac{R}{c_v}\)\(\[\dt^j,\divakt\]\( R\tth \nakt   q\)+ \divakt\( \[\dt^j ,R\tth \nakt\]  q\)\)+ \frac{\mu R}{c_v}\dt^{j+1} \(\dakt\vartheta\)\nonumber
\\&+\(1+\frac{R}{c_v}\)\dt^{j}\(\nakt \tq\cdot\dt w-\[\dt,\tq\divakt\]w\)+\frac{  R}{c_v}\dt^{j+1} \Xi^\delta+\dt^{j+1}\chi^\eps .
\end{align}
Taking the $L^2(\Omega)$ inner product of \eqref{qeqtee21} for $j=3$ with $\tilde J^\kappa  \dt^4 q$ and then integrating by parts over $\Omega$ yields
\begin{align}\label{qre1eeff}
&\hal\dtt\int_\Omega \( \tilde J^\kappa \abs{\dt^4 q}^2+    \(1+\frac{R}{c_v}\)   \tilde J^\kappa R\tth \abs{ \nakt \dt^3 q}^2\)
\nonumber
\\&\quad=\hal \int_\Omega\( \dt \tilde J^\kappa \abs{\dt^4 q}^2+\(1+\frac{R}{c_v}\)\dt\(\tilde J^\kappa R\tth \)\abs{ \nakt \dt^3 q}^2\)
\nonumber
\\&\qquad+\int_\Omega\(1+\frac{R}{c_v}\)\tilde J^\kappa R\tth \nakt \dt^3 q \cdot  \nabla_{\dt\akt} \dt^3 q
+\int_\Omega\tilde J^\kappa  \tilde{\mathfrak{f}}^j\dt^4q
\nonumber
\\&\quad\lsw \B [q] +
\B [w,\vartheta],
\end{align}
hereafter $\widetilde\ls$ denotes for $\le P\( \sT\TK \)$.
Integrating \eqref{qre1eeff} directly in time yields
\begin{align}\label{tv2ee33ff}
 \norm{  \dt^4 q }_{0}^2  +   \norm{  \dt^3 q }_{1}^2
 \lsw M_0^{\kd}+ T \sup_{[0,T]}\(\B [q] +
\B [w,\vartheta] \) .
\end{align}
On the other hand, applying the elliptic estimates to \eqref{qeqtee21} for $j=0,1,2$ and by using the fundamental theorem of calculus, we obtain
\begin{align}\label{qeqtee2ff}
\sum_{j=0}^2\ns{\dt^j q}_{4-j} & \lsw  \norm{  \dt^4 q }_{0}^2  +   \norm{  \dt^3 q }_{1}^2+\sum_{j=0}^2\ns{\tilde{\mathfrak{f}}^{ j}(q,w,\vartheta)}_{2-j}\nonumber
\\&\lsw \norm{  \dt^4 q }_{0}^2  +   \norm{  \dt^3 q }_{1}^2+M_0^{\kd} +T^2 \sup_{[0,T]}\(\B [q] +
\B [w,\vartheta] \)  .
\end{align}
This together with \eqref{tv2ee33ff} yields
\beq\label{jhgg10}
\B [q]
 \lsw M_0^{\kd}+ T \sup_{[0,T]}\(\B [q] +
\B [w,\vartheta] \),
\eeq
which implies that for $T>0$ sufficiently small, depending on  $\kappa$, $\eps$ and $(\tilde\eta,\tilde q, \tilde \theta)$ but not $M_0^{\kd}$,
\beq\label{jhgg1}
 \B [q]
 \lsw M_0^{\kd}+ T
\B [w,\vartheta] .
\eeq

Next, with this  $q$, we define $v$ as
\beq
  v(t)  = v_0^\delta-\int_0^t\frac{R\tth}{\tq}\nakt   q.
\eeq
It then follows that
\begin{align}\label{jhgg2}
\B [v]&\le  M_0^{\kd}+ (1+T)\sum_{j=1}^4\sup_{[0,T]}\ns{\partial_t^j v }_{4-j}
\lsw  M_0^{\kd} +  \sup_{[0,T]}\B[q]   .
\end{align}
On the other hand, with the same $q$ above, we solve $\theta$ as the solution to
\beq\label{psg}
\begin{cases}
\(\frac{c_v \tq}{R\tth} \theta  -\eps   \Delta_\akt  \theta\)(t)= \frac{c_v q_0^\delta}{R  }   -\eps   \Delta_{\a_0^{\kappa,\delta}}   \theta_{0}^{\kappa,\delta}
\\    \quad\ +\dis\int_0^t\( \partial_t\( \frac{c_v \tq}{R\tth}\)\vartheta+ \frac{c_v}{c_v+R} \dt q+\frac{\mu c_v }{c_v+R}  \Delta_\akt \vartheta+\frac{c_v}{c_v+R} \Xi^\delta- \frac{c_v}{c_v+R}   \chi^\eps  \)    &\text{in }\Omega
\\
\nakt \theta\cdot \nkt=0&\text{on }\Sigma .
\end{cases}
\eeq
For each time $t>0$, the solvability of the elliptic equation \eqref{psg} is classical \cite{Evans}, and we only derive the estimates as follows. Note that
\beq
\int_0^t  \partial_t\( \frac{c_v \tq}{R\tth}\)\vartheta= \frac{c_v \tq}{R\tth} \vartheta (t)- \frac{c_v q_0^\delta}{R  }
-\int_0^t    \frac{c_v \tq}{R\tth} \partial_t \vartheta
\text{ and }
 \int_0^t  \dt q=q(t)-q_0^\delta .
\eeq
Then the standard elliptic estimates imply
\begin{align}
\ns{\theta}_6
&\lsw M_0^{\kd}+ \sT\ns{q}_4 + \sT\ns{\vartheta }_4 + T\sT\ns{\dt \vartheta }_4+ T\sT\ns{  \vartheta }_6
\nonumber
\\&
\lsw M_0^{\kd}+   \sup_{[0,T]}
\B [q] + T \sup_{[0,T]}
\B [ \vartheta] .
\end{align}
While for $j= 1,2,3,4$,
\begin{align}
\ns{\dt^j\theta}_{6-j}
&\lsw M_0^{\kd}+ \sT\ns{\dt^j q}_{4-j} + \sT\ns{\dt^{j-1} \vartheta }_{6-j}+\sum_{j=0}^5\ns{\dt^{j-1} \theta }_{6-j}
\nonumber
\\&
\lsw M_0^{\kd}+   \sup_{[0,T]}
\B [q] + T \sup_{[0,T]}
\B [ \vartheta]  + T \sup_{[0,T]}
\B [ \theta].
\end{align}
These two yield
 \beq\label{jhgg30}
 \B[\theta]  \lsw M_0^{\kd}+   \sup_{[0,T]}
\B [q] + T \sup_{[0,T]}
\B [\theta]  + T \sup_{[0,T]}
\B [ \vartheta],
\eeq
which implies that for $T>0$ sufficiently small,
\beq\label{jhgg3}
 \B[\theta]  \lsw M_0^{\kd}+   \sup_{[0,T]}
\B [q]   + T \sup_{[0,T]}
\B [ \vartheta].
\eeq

Therefore, by \eqref{jhgg1}, \eqref{jhgg2} and \eqref{jhgg3}, we conclude
 \beq\label{jhgg11}
\sup_{[0,T]}\B[q,v, \theta]   \lsw M_0^{\kd}+ T\sup_{[0,T]}
\B [w,\vartheta]   .
\eeq
Hence, if $T$ is taken to be sufficiently small (independent of $M_0^{\kd}$) and $M$ is taken to be sufficiently large with respect to $M_0^{\kd}$, then $(v,\theta)\in \mathbb{X}^M_T$. This implies that the mapping $\mathcal{M}: \mathbb{X}^M_T\rightarrow \mathbb{X}^M_T$ is well-defined.

We shall now show that the mapping $\mathcal{M}$ is contractive. Let $(w^i, \vartheta^i)\in \mathbb{X}^M_T$ and $(v^i, \theta^i)=\mathcal{M}(w^i, \vartheta^i)\in \mathbb{X}^M_T$, $i=1,2$.
Similarly as \eqref{jhgg11}, we can deduce
 \beq
\B[v_1-v_2,\theta_1-\theta_2](T) \lsw T   \B[\vartheta_1-\vartheta_2,w_1-w_2](T) .
\eeq
We then see that with  $T$ sufficiently small the mapping $\mathcal{M}: \mathbb{X}^M_T\rightarrow \mathbb{X}^M_T$ is  a contraction and therefore admits a unique fixed point $\mathcal{M}(v, \theta)=(v, \theta)$, which, together with $q$ as the solution to \eqref{qeqtee} with $ (w, \vartheta)=(v, \theta)$, solve the following problem
\begin{equation}\label{lkeproblemeea}
\begin{cases}
  \dt^2 q -\(1+\frac{R}{c_v}\)\divakt\( R\tth \nakt q\) &
  \\\quad= \frac{\mu R}{c_v}\dt\(\dakt\theta\)+\(1+\frac{R}{c_v}\)\(\nakt \tq\cdot\dt v-\[\dt,\tq\divakt\]v\)+\frac{  R}{c_v}\dt \Xi^\delta +\dt\chi^\eps  &\text{in } \Omega\\
\frac{\tq}{R\tth}\partial_t v  +\nakt   q   =0  &\text{in } \Omega \\
\frac{c_v \tq}{R\tth}\partial_t\theta-\frac{\mu c_v }{c_v+R}  \Delta_\akt \theta-\eps  \dt\(\Delta_\akt \theta\) =  \frac{c_v}{c_v+R} \dt q+\frac{c_v}{c_v+R}\Xi^\delta &\text{in } \Omega \\
q=\bar{p},\quad \nakt \theta\cdot \tilde\n^\kappa=0&\text{on }\Sigma \\
 (q,\dt q, v, \theta)\mid_{t=0} =\(q_{0}^{\delta}, \dt q^{(\eps)}(0), v_{0}^{\delta}, \theta_{0}^{\kappa,\delta}\),
\end{cases}
\end{equation}
where a variation of Proposition \ref{pseq1k22hh} has been used.

To verify that  $(v, q, \theta)$ is the solution to \eqref{lkeproblem}, it remains to recover the first and third equations in \eqref{lkeproblem}. For this, taking $\diverge_{\akt}$ to the second equation in \eqref{lkeproblemeea} and then using  the first equation, one finds
\beq\label{ffal}
\dt\left(\dt  q +\(1+\frac{R}{c_v}\)\tq  \divakt v - \frac{\mu R}{c_v}\Delta_\akt\theta-\frac{  R}{c_v}  \Xi^\delta-\chi^\eps\right)=0.
\eeq
Then one gets the equation \eqref{fe1eagg} as it holds at the initial time (cf. \eqref{lkeproblem000} with $j=1$). Thus, \eqref{fe1eagg} and the third equation in \eqref{lkeproblemeea} implies the third equation in \eqref{lkeproblem}, which together with \eqref{fe1eagg} again implies the first equation in \eqref{lkeproblem}.

Finally, as $T$ does not depend on $  M_0^{\kappa,\delta}$, so we can extend the solution to any $\tilde T<\infty$ which satisfies
\beq
\B(\tilde T)  \le \tilde M_0^{\kd,\eps},
\eeq
Furthermore, now that we have the solution to \eqref{lkeproblem}, we can improve as the previous section to derive the estimate of
$\norm{ v }_{4 }^2$, and the estimate \eqref{boundka} follows.
\end{proof}

\subsection{Solvability of \eqref{lkproblem}}\label{lexist}

Now we can present the proof of Theorem \ref{linearthm}.

\begin{proof}[Proof of Theorem \ref{linearthm}]
We now recover the dependence of the solutions to \eqref{lkeproblem}, which were constructed in Theorem \ref{th43epses}, on $\eps$ as $\( q^{(\eps)},v^{(\eps)},  \theta^{(\eps)}\)$.
The $\eps$-independent estimates \eqref{boundeps} of Theorem \ref{th43eps} then yields the convergence of $\( q^{(\eps)},v^{(\eps)},  \theta^{(\eps)}\)$ to a limit $(v ,q ,\theta )$,  which is the unique solution to \eqref{lkproblem} on $[0,\tilde T]$ and satisfies the estimates \eqref{eeest}.
\end{proof}

\section{Construction of solutions to nonlinear $(\kappa,\delta)$-approximate problem}\label{exist2}

We now prove the local existence of unique solution to  \eqref{approximate} for each fixed $\kd>0$.
\begin{theorem}\label{nonlinearthm}
Fix $\kd>0$. There exist a $T_0^{\kappa,\delta}>0$ and a unique solution $(\eta,q,v, \theta)$ to  \eqref{approximate} on $[0, T_0^{\kappa,\delta}]$ that satisfy
\begin{equation}\label{estima}
\mathfrak{G}(T_0^{\kappa,\delta}) \leq M_0^\kd.
\end{equation}
\end{theorem}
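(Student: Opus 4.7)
The plan is to construct the solution to \eqref{approximate} via a Picard-type iteration scheme built on the well-posedness of the linearized system \eqref{lkproblem} proved in Theorem \ref{linearthm}. First I would set up an initial time-extension $(\eta^{(0)},q^{(0)},v^{(0)},\theta^{(0)})$ of the approximate data $(\eta_0^\delta,q_0^\delta,v_0^\delta,\theta_0^{\kappa,\delta})$ whose time derivatives at $t=0$ agree with those prescribed by \eqref{in0defk1kk1}--\eqref{in0defk1kk4}; this may be produced for instance by a finite Taylor polynomial in $t$ truncated smoothly. Given $(\eta^{(n)},q^{(n)},\theta^{(n)})$, I would invoke Theorem \ref{linearthm} with $(\tilde\eta,\tilde q,\tilde\theta)=(\eta^{(n)},q^{(n)},\theta^{(n)})$ to produce $(q^{(n+1)},v^{(n+1)},\theta^{(n+1)})$ solving \eqref{lkproblem}; verification of the required hypotheses \eqref{haha-0}--\eqref{haha0} follows from the uniform bound on the previous iterate and from the construction of the initial data in Section \ref{sec3}. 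I would then recover $\eta^{(n+1)}$ by solving the (non-local) flow-map ODE
\begin{equation}
\partial_t\eta^{(n+1)}=v^{(n+1)}+\psi^\kappa\!\left(\eta^{(n+1)},v^{(n+1)}\right),\qquad \eta^{(n+1)}(0)=\eta_0^\delta,
\end{equation}
on $[0,T]$; for fixed $\kappa>0$ the map $\eta\mapsto\psi^\kappa(\eta,v)$ is Lipschitz in all relevant norms by \eqref{etadef}--\eqref{etaaa} and the elliptic estimates of Lemma \ref{preest}, so this ODE is solvable by a short-time fixed point argument and may be continued as long as $\mathfrak{G}$ stays finite.

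The second step is a uniform bound in $n$. Because Theorem \ref{linearthm} supplies an $n$-dependent bound, one has to instead revisit the tangential and elliptic estimates of Section \ref{exist} for the iterate, verifying that all the cancellations used there (notably those producing \eqref{kesesee}, \eqref{bp243366}, and the Alinhac good-unknown reduction) survive when the coefficient fields are taken at level $n$ while the unknowns live at level $n+1$. The crucial observation is that the quantities $(\eta^{(n)},q^{(n)},\theta^{(n)})$ and $(\eta^{(n+1)},q^{(n+1)},\theta^{(n+1)})$ share the same initial data and its time derivatives, so differences between coefficient values and values of the solution can be traded for $T$-smallness via the fundamental theorem of calculus as in \eqref{etaest}. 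Adapting the proof of Theorem \ref{th43} in this way yields a time $T_0^{\kappa,\delta}>0$, independent of $n$, and a constant $M_0^\kd$ such that $\mathfrak{G}[\eta^{(n)},q^{(n)},v^{(n)},\theta^{(n)}](T_0^{\kappa,\delta})\le M_0^\kd$ for all $n$.

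For convergence I would run a contraction argument in a weaker norm. Writing $(\bar\eta,\bar q,\bar v,\bar\theta)=(\eta^{(n+1)}-\eta^{(n)},q^{(n+1)}-q^{(n)},v^{(n+1)}-v^{(n)},\theta^{(n+1)}-\theta^{(n)})$, the differences solve a linear system of the same form as \eqref{lkproblem} with source terms that are multilinear in $(\bar\eta,\bar q,\bar v,\bar\theta)$ from the step below, coefficients depending on the uniform bound, and vanishing initial data. Performing on this system the same energy/elliptic scheme as in Section \ref{exist}, but at one order of regularity lower to accommodate the multiplications, and exploiting the coupling cancellation of Proposition \ref{prop31} to handle the $\bar\theta$-contributions, yields an estimate of the form
\begin{equation}
\sup_{[0,T]}\mathcal{B}\!\left[\bar\eta,\bar q,\bar v,\bar\theta\right]\ \le\ C\,T\,\sup_{[0,T]}\mathcal{B}\!\left[\bar\eta^{\mathrm{prev}},\bar q^{\mathrm{prev}},\bar v^{\mathrm{prev}},\bar\theta^{\mathrm{prev}}\right],
\end{equation}
in a lower-order norm $\mathcal{B}$. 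Choosing $T$ small (depending on $\kappa,\delta$), this gives a genuine contraction and hence a limit $(\eta,q,v,\theta)$ in $\mathcal{B}$; combined with the uniform-in-$n$ bound in the higher norm $\mathfrak{G}$ and interpolation, the convergence is strong enough to pass to the limit in every term of \eqref{approximate}, establishing existence. Uniqueness follows by the same contraction argument applied to the difference of two solutions.

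The main obstacle will be closing the contraction estimate in the presence of the strong coupling between the Euler and Fourier parts, which is precisely the same difficulty that forced the $\varepsilon$-pseudo-parabolic regularization in Section \ref{exist0}. At the level of differences, the analogue of the cancellation \eqref{efaa} is only available if $\bar\theta$ carries sufficient time regularity; choosing the weak norm $\mathcal{B}$ to preserve just enough derivatives on $\dt\bar\theta$ while allowing the multilinear error terms to be absorbed in $T$-small factors requires careful balancing. Once this balance is struck, the rest of the argument is by now standard and the stated bound $\mathfrak{G}(T_0^{\kappa,\delta})\leq M_0^\kd$ is inherited in the limit from the uniform bound on the iterates.
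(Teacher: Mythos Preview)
Your plan is essentially the paper's own approach: Picard iteration via Theorem \ref{linearthm}, a uniform-in-$n$ bound, and contraction in a weaker norm. Two simplifications in the paper are worth noting. First, the paper linearizes the flow-map step by taking $\partial_t\eta^{(n+1)}=v^{(n+1)}+\psi^\kappa(\eta^{(n)},v^{(n+1)})$ rather than your implicit choice $\psi^\kappa(\eta^{(n+1)},v^{(n+1)})$; this makes the $\eta$-update an explicit time integral and avoids the auxiliary fixed-point argument you describe. Second, for the uniform-in-$n$ bound the paper does \emph{not} revisit the delicate cancellations of Section~\ref{exist} (the Alinhac good unknowns and the $\psi^\kappa$-cancellation behind \eqref{bp243366}); those are needed only for $\kappa$-independent estimates, whereas here $\kappa>0$ is fixed and the smoothing gives $\tilde\eta^\kappa$ extra regularity, so one can estimate $\bar\partial^4$-terms directly as in the proof of Theorem~\ref{th43eps} (cf. \eqref{concl1e}). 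Your proposed verification that the Section~\ref{exist} cancellations ``survive'' at mismatched levels would in fact run into trouble, since $\partial_t\eta^\kappa$ no longer equals $v+\psi^\kappa$ exactly in the iteration; fortunately this verification is simply unnecessary. With these two adjustments your outline matches the paper's proof.
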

\begin{proof}
The solution to \eqref{approximate} will be produced by the method of successive approximations.  The sequence of approximate solutions $\{(\eta^{(n)},q^{(n)},v^{(n)}, \theta^{(n)})\}_{n=1}^\infty$ is constructed as follows. First, we set $ (\eta^{(1)},q^{(1)},v^{(1)}, \theta^{(1)})$ to be such that
\begin{align}\label{hahann}
&\(\dt^j\eta^{(1)}(0),\dt^j q^{(1)}(0),\dt^j v^{(1)}(0), \dt^j \theta^{(1)}(0)\)
\nonumber\\&\quad=\(\dt^j \eta^{(\kd)}(0),\dt^j  q^{(\kd)}(0),\dt^j  v^{(\kd)}(0),\dt^j  \theta^{(\kd)}(0)\),\ j=0,1,2,3,
\end{align}
which is guaranteed by the time extension \cite{LM}.
Second, supposing that $(\eta^{(n)},q^{(n)},v^{(n)}, \theta^{(n)})$, $n\geq 1$, defined on $[0,T]$, are known such that
\begin{align}\label{hahannn}
&\(\dt^j\eta^{(n)}(0),\dt^j q^{(n)}(0),\dt^j v^{(n)}(0), \dt^j \theta^{(n)}(0)\)
\nonumber\\&\quad=\(\dt^j \eta^{(\kd)}(0),\dt^j  q^{(\kd)}(0),\dt^j  v^{(\kd)}(0),\dt^j  \theta^{(\kd)}(0)\),\ j=0,1,2,3
\end{align}
and satisfy certain estimates to be specified later, we construct $\(\eta^{(n+1)}, q^{(n+1)}, v^{(n+1)}, \theta^{(n+1)}\)$ as the solution to
 \begin{equation}\label{nsystem}
\begin{cases}
\partial_t\eta^{(n+1)} =v^{(n+1)} +\psi^{\kappa(n)} &\text{in } \Omega \\
\dt  q^{(n+1)} -\frac{q^{(n)}}{\theta^{(n)}}\dt\theta^{(n+1)}+ q^{(n)}\divakn v^{(n+1)}  = 0&\text{in } \Omega \\
\frac{q^{(n)}}{R\theta^{(n)}}\partial_t v^{(n+1)}  +\nakn q^{(n+1)}  =0  &\text{in } \Omega \\
\frac{ c_v q^{(n)}}{R\theta^{(n)}}\partial_t\theta^{(n+1)}  +q^{(n)}\divakn v^{(n+1)} -\mu\dakn \theta^{(n+1)}=\Xi^\delta  &\text{in } \Omega \\
q^{(n+1)}=\bar{p},\quad \nakn \theta^{(n+1)}\cdot \nkn=0&\text{on }\Sigma \\
 \(\eta^{(n+1)}(0),q^{(n+1)}(0), v^{(n+1)}(0), \theta^{(n+1)}(0)\)  =\(\eta_{0}^{\delta},q_{0}^{\delta}, v_{0}^{\delta}, \theta_{0}^{\kappa,\delta}\).
\end{cases}
\end{equation}
Here we have denoted  $\a^{\kappa(n)}: =\a^\kappa( \eta^{(n)})$ (and $\n^{\kappa(n)}$, $J^{\kappa(n)}$, etc.) and $\psi^{\kappa(n)}=\psi^{\kappa}(\eta^{(n)},v^{(n+1)})$.

We claim that there exist $M>0$ and $T>0$ such that if
\begin{equation}\label{claim1}
\mathfrak{G} ^{(n)} (T):= \mathfrak{G}[\eta^{(n)},q^{(n)},v^{(n)}, \theta^{(n)}](T)\leq M \text{ and }J^{\kappa(n)}, q^{(n)}, q^{(n)}\ge \frac{c_0}{2},
\end{equation}
then there exists a unique solution $\(\eta^{(n+1)}, q^{(n+1)}, v^{(n+1)}, \theta^{(n+1)}\)$ to \eqref{nsystem} on $[0,T]$ satisfying
\begin{equation}\label{claim1nn}
\mathfrak{G} ^{(n+1)} (T) \leq M \text{ and }J^{\kappa(n+1)}, q^{(n+1)}, q^{(n+1)}\ge \frac{c_0}{2}.
\end{equation}
In the below, the polynomials $P$ will be allowed to depend on $\kappa$ and $\delta$, and we will explicitly enumerate some of them appearing in estimates as $P_1,\dots,P_5$ so that they can be referred.
Under the induction assumption \eqref{claim1}, the existence of unique solution $\( q^{(n+1)}, v^{(n+1)}, \theta^{(n+1)}\)$ to the subsystem of the last three equations in \eqref{nsystem} is guaranteed by Theorem \ref{linearthm}, and it is direct to solve the first equation for $\eta^{(n+1)} $. So it remains to prove the estimate \eqref{claim1nn}. First, similarly as the estimates \eqref{concl1e}, by \eqref{claim1}, we have
\begin{align}
 &\mathfrak{G}[q^{{(n+1)}},v^{(n+1)}, \theta^{(n+1)}](T)
 \nonumber\\&\quad\le P\!\(\!\sup_{[0,T]}\GF[\eta^{(n)},q^{(n)},  \theta^{(n)}]\!\)\!\!\(\!P(\GE(0))+  T  P\!\( \mathfrak{G}[\eta^{(n)},q^{(n)},  \theta^{(n)}](T)\)\!  \mathfrak{G}[ q^{(n+1)},v^{(n+1)}, \theta^{(n+1)}](T)\)
  \nonumber\\&\quad\le P_1\!\(\GE(0) \)\!\(1+TP_1\( M\)\)+\(P_1\(\GE(0) \)+TP_1\( M\)\)T  P_1\( M\)  \mathfrak{G}[ q^{(n+1)},v^{(n+1)}, \theta^{(n+1)}](T).
\end{align}
If we take  $
T\le \frac{1}{2\(P_1\(\GE(0) \)+1\)   P_1\( M\)},
$
then
\begin{align}\label{claim1nn9}
\mathfrak{G}[q^{{(n+1)}},v^{(n+1)}, \theta^{(n+1)}](T) \le P_2\(\GE(0) \) .
\end{align}
Next, we estimate for $\eta^{{(n+1)}}$. Similarly as in the proof of Lemma \ref{preest}, we can deduce
\beq
\sum_{j=0}^3\ns{\dt^j\psi^{\kappa(n)}}_{4-j}
 \le P\(\sup_{[0,T]}\GF[\eta^{(n)} ]\)\mathfrak{G}[ v^{(n+1)}  ].
\eeq
Hence, we have
\begin{align}\label{claim1nn99}
 \mathfrak{G}[\eta^{{(n+1)}} ](T) &\le P(\GE(0))+(1+T) \mathfrak{G}[v^{(n+1)}](T)+\sum_{j=0}^3\ns{\dt^j\psi^{\kappa(n)}}_{4-j}
 \nonumber\\&\le P(\GE(0))+P\(\sup_{[0,T]}\GF[\eta^{(n)} ]\)\mathfrak{G}[ v^{(n+1)}  ]
  \nonumber\\& \le P_3(\GE(0))+\(P_3\(\GE(0) \)+TP_3\( M\)\)     P_2\(\GE(0) \)
 \le P_4(\GE(0)),
\end{align}
provided we have taken $T\le \frac{1}{P_3\( M\)}$. We can now fix $M:=P_2(\GE(0))+P_4(\GE(0))$ and then take
\beq
T:=\min\left\{\frac{1}{2\(P_1\(\GE(0) \)+1\)   P_1\( P_2(\GE(0))+P_4(\GE(0))\)}, \frac{1}{P_3\( P_2(\GE(0))+P_4(\GE(0))\)}\right\}.
\eeq
Then we deduce the first estimate in \eqref{claim1nn} from  \eqref{claim1nn9} and \eqref{claim1nn99}, and the second estimate follows then by restricting $T$ smaller if necessary. This concludes the claim. Note that it is direct to guarantee \eqref{claim1} to hold for $n=1$, so one can then iterate from $n=1$ to construct the sequence of approximate solutions $\{(\eta^{(n)},q^{(n)},v^{(n)}, \theta^{(n)})\}_{n=1}^\infty$ satisfying \eqref{claim1}.

Now we prove the contraction of this sequence. Let $n\ge 3$ and denote the differences:
\begin{equation}
\bar\eta^{(n)} =\eta^{(n+1)}-\eta^{(n)},\ \bar q^{(n)}=q^{(n+1)}-q^{(n)},\  \bar v^{(n)}=v^{(n+1)}-v^{(n)}, \  \bar \theta^{(n)}=\theta^{(n+1)}-\theta^{(n)}.
\end{equation}
Also, we denote for the other function $f$ of the solutions
\begin{equation}
\overline{f}^{(n)}=f^{(n)}-f^{(n-1)}.
\end{equation}
We find that
 \begin{equation}\label{lkeproblembbc}
\begin{cases}
\partial_t\bar \eta^{(n)} =\bar v^{(n)} +\overline{\psi^{\kappa}}^{(n)} &\text{in } \Omega \\
\dt \bar q^{(n)} -\frac{   q^{(n)}}{ \theta^{(n)}}\partial_t\bar \theta^{(n)}+ q^{(n)} \divakn \bar v^{(n)} =\bar F_1^{(n)}&\text{in } \Omega \\
\frac{q^{(n)}}{R\theta^{(n)}}\partial_t \bar v^{(n)}  +\nakn \bar q^{(n )}  =\bar F_2^{(n)} &\text{in } \Omega \\
\frac{ c_v q^{(n)}}{R\theta^{(n)}}\partial_t\bar \theta^{(n)}  +q^{(n)} \divakn \bar v^{(n)}  -   \mu     \dakn \bar \theta^{(n)}
=  \bar F_3^{(n)}  &\text{in } \Omega \\
\bar q^{(n)}=0,\quad \nakn \bar \theta^{(n)}\cdot \nkn=-\overline{\n^\kappa_{ij}\a^\kappa_{ij}}^{(n)}\p_j \theta^{(n)} &\text{on }\Sigma \\
 \(\bar \eta^{(n)}(0),\bar q^{(n)}(0), \bar v^{(n)}(0), \bar\theta^{(n)}(0)\)=(0,0,0, 0),
\end{cases}
\end{equation}
where
\begin{align}
\bar F_1^{(n)}:= &\overline{\(\frac{q }{\theta }\)}^{(n)}\partial_t  \theta^{(n)} -  \overline{ q \a_{ij}^\kappa}^{(n)}\p_j v^{(n)}_i,
\\\bar F_2^{(n)}:=& -\frac{1}{R}\overline{\(\frac{q }{\theta }\)}^{(n)} \partial_t   v^{(n)} - \overline{\a^{\kappa}}^{(n)} \nabla  q^{(n )},
\\\bar F_3^{(n)}:=&\mu \a_{ij}^{\kappa(n)}\p_j\(\overline{\a_{ij}^{\kappa}}^{(n)} \p_k\theta^{(n)} \)+\mu \overline{\a_{ij}^{\kappa}}^{(n)}\p_j\({\a_{ij}^{\kappa(n-1)}} \p_k\theta^{(n)} \)
\nonumber
\\&-\frac{ c_v }{R}\overline{ \frac{q }{\theta } }^{(n)}\partial_t  \theta^{(n)}-\overline{ q \a_{ij}^\kappa}^{(n)}\p_j v^{(n)}_i.
\end{align}
We shall show the contraction in a slightly weaker sense. Define
\beq\label{l_it_03}
 \overline{\mathfrak{G}} (t) = \sup_{[0,t]}\overline{\mathfrak{E}} +\int_0^t \overline{\mathfrak{D}} ,
\eeq
where
\begin{align}
 \overline{\mathfrak{E}} :=&\norm{\bar\eta }_3^2 +  \sum_{j=0}^3\norm{ \dt^j \bar \eta  }_{4-j}^2
 +  \sum_{j=0}^3\norm{ \(\dt^j \bar q  , \dt^j \bar v \) }_{3-j}^2 +\sum_{j=0}^2\norm{\dt^j \bar  \theta }_{4-j}^2 +\norm{\dt^3\bar  \theta }_0^2 ,
\\
\overline{\mathfrak{D}} :=&\norm{\pa_t^3\bar  \theta }_{1}^2.
\end{align}
By using a similar strategy as before and using the uniform estimates \eqref{claim1}, we can deduce
\begin{align}\label{gda1}
 \overline{\mathfrak{G}}^{(n)}(T) &\le  T P\(\mathfrak{G} ^{(n)} (T)+\mathfrak{G} ^{(n-1)} (T)\) \(  \overline{\mathfrak{G}}^{(n)}(T)+ \overline{\mathfrak{G}}^{(n-1)}(T) \)
 \nonumber\\&\le  T P_5\(M\) \(  \overline{\mathfrak{G}}^{(n)}(T)+ \overline{\mathfrak{G}}^{(n-1)}(T) \),
\end{align}
where $ \overline{\mathfrak{G}}^{(n)}:= \overline{\mathfrak{G}} [\bar\eta^{(n)},\bar q^{(n)},\bar v^{(n)},\bar\theta^{(n)}]$.
If we take  $
T\le \frac{1}{3P_5( M)},
$
then \eqref{gda1} implies
\begin{equation}\label{l_it_033}
\overline{\GG}^{(n)}(T)\le \frac{1}{2}\overline{\GG}^{(n-1)}(T).
\end{equation}

Consequently, the uniform estimate \eqref{claim1} implies that as $n\rightarrow \infty$, up to extraction of a subsequence, the sequence $(\eta^{(n)}, v^{(n)}, q^{(n)},\theta^{(n)})$ converges to a limit $(\eta,v, q, \theta)$ in the weak or weak-$\ast$ sense of the norms in defining $\mathfrak{G}$. Then the weak lower semicontinuity shows that $(\eta,v, q, \theta)$ satisfies the estimate \eqref{estima} by recovering the dependence of $T$ and $M$ on the initial data and $\kd$. On the other hand, the contractive estimate \eqref{l_it_033} shows that the whole sequence  $(\eta^{(n)}, v^{(n)}, q^{(n)},\eta^{(n)})$ converges strongly to the limit
$(\eta,v, q, \theta)$   in the  norms of $\overline{\mathfrak{G}}$, which is sufficient for passing to the limit in  \eqref{nsystem}.  Then one finds that the limit $(\eta,v, q, \theta)$ is a
strong solution to \eqref{approximate} on $[0,T_0^\kd]$. The
uniqueness of   solutions to \eqref{approximate}
satisfying \eqref{estima} follows  similarly as that of showing the contraction.
\end{proof}

\section{Local well-posedness of Euler--Fourier}\label{limit}

We  can now present the proof of Theorem \ref{mainthm}.
\begin{proof}[Proof of Theorem \ref{mainthm}]

We recover the dependence of the solutions to \eqref{approximate}, constructed in Theorem \ref{nonlinearthm}, on $(\kappa,\delta)$ as $\(\eta^{(\kd)}, q^{(\kd)},v^{(\kd)},  \theta^{(\kd)}\)$.
The $(\kappa,\delta)$-independent estimates \eqref{bound} in Theorem \ref{th43} then implies that  $\(\eta^{(\kd)}, q^{(\kd)},v^{(\kd)},  \theta^{(\kd)}\)$ are indeed the solutions to \eqref{approximate} on the time interval $[0,T_0]$. Furthermore, \eqref{bound} yields a strong convergence of $\(\eta^{(\kd)}, q^{(\kd)},v^{(\kd)},  \theta^{(\kd)}\)$ to a limit $(\eta,v ,q ,\theta )$, up to extraction of a subsequence, which is more than sufficient for passing to the limit  in \eqref{approximate} as first $\kappa\rightarrow0$ and then $\delta\rightarrow0$. We then find that $(\eta,v ,q ,\theta )$ is a strong solution to \eqref{EFv} on $[0,T_0]$ and satisfies the estimates \eqref{enesti}. This shows the existence of solutions to \eqref{EFv}. The proof of the uniqueness follows similarly as that of  \eqref{approximate}, with an additional use of Alinhac good unknowns, and we omit details and refer to \cite{GW} for a similar situation.
\end{proof}

\section{Low mach number limit}

For each fixed $\lambda>0$, Theorem \ref{mainthm} implies that there exist a time $T_0^\lambda>0$ and a unique solution $(\eta^\lambda,q^\lambda, v^\lambda, \theta^\lambda)$ to \eqref{mach} on $[0,T_0^\lambda]$. Then the key to the proof of Theorem \ref{uniapriori} is the derivation of the $\lambda$-independent estimate \eqref{enestiu}. The strategy is similar to that of Theorem \ref{th43}, however, the singular $\lambda^{-2}$-factor in the rescaled pressure term makes the analysis much more involved. For simplifications of notations, we will suppress the $\lambda$-dependence of the solutions.

It follows from Theorem \ref{mainthm} that for $t\in [0,T]$ with $T\le T_0^\lambda$, by  restricting $T_0^\lambda\le 1$ smaller if necessary,
\beq
\label{apriori112}
J, q, \theta \ge \dfrac{c_0}{2}>0 \text{ in }\Omega
\eeq
and
\beq\label{apriori11213}
- \frac{1 }{\lambda^2}\nabla q\cdot N\ge \frac{c_0}{2}>0\text{ on }\Sigma.
\eeq
Again, certain terms in $\mathcal{G}^\lambda$ can be estimated directly. Define
\begin{align}\label{fdefm}
\mathfrak{F}^\lambda:= & \norm{ \eta}_{4}^2+\norm{ v}_{3}^2+\norm{\dt v}_{2}^2+\norm{\lambda \dt^{2} v}_{1}^2+\norm{\lambda^2\dt^3v}_0^2\nonumber\\&+\norm{\frac{1}{\lambda^2}\nabla q}_{2}^2+\norm{\frac{1}{\lambda} \dt q}_2^2+\norm{\dt^2q}_1^2+\norm{\lambda\dt^3q}_0^2\nonumber\\&+ \norm{ \theta}_{4 }^2+ \norm{ \dt\theta}_{3 }^2+\ns{\dt^2\theta}_1+ \norm{\lambda\dt^2\theta}_{2}^2+ \norm{\lambda\dt^3\theta}_{1}^2,
\end{align}
then
\begin{equation}\label{fdefmll}
  \mathfrak{F}^\lambda(t)\ls \mathfrak{F}^\lambda(0)+\(\int_0^T  \sqrt{\E^\lambda+\D^\lambda} \)^2 \leq M_0+T\fg^\lambda(T).
\end{equation}
Denote
\beq
 \mel(t):=P\(\sup_{[0,t]}\mathfrak{F}^\lambda \).
\eeq

We now present the proof of Theorem \ref{uniapriori}.
\begin{proof}[Proof of Theorem \ref{uniapriori}]
We first derive the estimate \eqref{enestiu}, and we will again divide the proof  into two main steps.

{\bf Step 1: Energy evolution estimates}

First, we estimate for the highest order horizontal spatial derivatives.
We introduce again the good unknowns:
\begin{equation}
        \mathcal{V}=\bp^4v-\bp^4\eta\cdot\na v\text{ and } \mathcal{Q}=\bp^4 q -\bp^4\eta\cdot\na q,
\end{equation}
which satisfy
\begin{equation}
  \label{eqV22}
\begin{cases}
      \frac{q}{R\theta}\dt\mathcal{V}  + \dfrac{1}{\lambda^2}\nabla_\a\mathcal{Q}= - \frac{q}{R\theta}\dt\left(\bp^4\eta\cdot\naba v\right)-\left[\bp^4,\frac{q}{R\theta}\right]\dt v - \dfrac{1}{\lambda^2}\mathcal{C}(q) &\text{in }\Omega \\
        \dt \mathcal Q+  q\diva \mathcal V= -\dt\(\bp^4\eta\cdot\nabla_\a q\)+\bp^4\(\frac{q}{\theta}\dt\theta\)-\left[\bp^4, q\right]\diva v-q\mathcal C_i(v_i)&\text{in }\Omega\\
        \mathcal Q=(-\nabla q\cdot N)\bp^4\eta\cdot\n &\text{on }\Sigma,
\end{cases}
\end{equation}
where
\begin{equation}
        \label{commutatori}
\mathcal{C}_i(f)=\bp^4\eta \cdot\nak( \pa_i f)
-\left[\bp^4\p_\al, \a_{i\ell}\a_{mj}\right]\p_\al\pa_\ell \eta_m\pa_j f+\left[\bp^4, {\a_{ij}} ,\pa_j f\right].
\end{equation}
Similarly as in the proof of Proposition \ref{ppro1} (see also \eqref{intro3}--\eqref{intro5}), we deduce
\begin{align}\label{bp24l}
  &\hal\dfrac{d}{dt}\int_\Omega \frac{J q}{R\theta}\abs{\mathcal V}^2+\hal\dtt\int_\Omega  \frac{J}{q}\abs{\frac{1}{\lambda}\mathcal Q}^2+\hal \frac{d}{dt} \int_{\Sigma}\(- \frac{\nabla q\cdot N}{\lambda^2}\) J\abs{\bp^4 \eta\cdot\n  }^2 \nonumber
  \\&\quad=\hal \int_\Omega \dt\(\frac{J q}{R\theta}\)\abs{\mathcal V}^2-\int_\Omega\! J \!\(  \frac{q}{R\theta}\dt\left(\bp^4\eta\cdot\naba v\right)+\left[\bp^4,\frac{q}{R\theta}\right]\dt v + \frac{1}{\lambda^2}\mathcal{C}(q)\)\!\cdot\!\mathcal V
  \nonumber
  \\&\qquad+\hal \int_\Omega \dt\( \frac{J}{q}\)\abs{\frac{1}{\lambda}\mathcal Q}^2-\int_\Omega \frac{J}{\lambda^2q}\mathcal Q\(   \dt \(\bp^4\eta\cdot\na q\)-\bp^4\(\frac{ q}{\theta}\partial_t\theta\) +\[\bp^4,q\]\diva  v+q\mathcal C_i(v_i)\)
 \nonumber
  \\&\qquad +\hal  \int_{\Sigma}\dt\(- \frac{\nabla q\cdot N J}{\lambda^2}\)  \abs{\bp^4\eta\cdot \n}^2\nonumber
  \\&\quad \le \mel\(1+\sqrt{\D^\lambda}\) \E^\lambda,
  \end{align}
  where we have used $\ns{ \lambda^{-2} \nabla\dt q}_{L^\infty(\Sigma)}\ls  {\D^\lambda}$.
Integrating \eqref{bp24l} in time, by \eqref{apriori11213} and \eqref{fdefmll}, we deduce
  \begin{equation}
  \label{t1estl}
  \norm{\(\frac{1}{\lambda}\bp^4 q,\bp^4v\)(t)}_{0}^2+\abs{\bp^4\eta\cdot\n(t)}_0^2
  \leq M_0+T\pekl.
  \end{equation}

Next, we estimate for the highest order temporal derivatives, with the weights of positive powers of $\lambda$.
Applying $ \dt^4$ to the third, fourth and second equations in \eqref{mach} and then taking the $L^2(\Omega)$ inner product with $ \lambda^4J\dt^4v$,  $\lambda^2\frac{J}{q}   \dt^3 \(\frac{  q}{ \theta}\dt\theta\)$ and $\lambda^2\frac{J}{q} \(\dt^4q-  \dt^3 \(\frac{  q}{ \theta}\dt\theta\)\)$, respectively, integrating by parts and summing them up, similarly as in the proof of Proposition \ref{prop31}, we deduce
\begin{align}\label{kesesl1200}
  &\hal \dfrac{d}{dt}\int_\Omega  \frac{J q}{R\theta} \abs{\lambda^2\dt^4v}^2
 +\hal \dfrac{d}{dt}\int_\Omega   \frac{ c_vJ  }{R q} \abs{\lambda\dt^3 \(\frac{  q}{ \theta}\dt\theta\) }^2
  \nonumber
 \\&\quad-   \int_\Omega  \lambda^2\frac{\mu J}{q}\dt^4\(\Delta_\a  \theta\)\dt^3 \(\frac{  q}{ \theta}\dt\theta\)
 + \hal\dtt  \int_\Omega \frac{ J   }{ q}\lambda^2\abs{\dt^4 q- \dt^3 \(\frac{  q}{ \theta}\dt\theta\) }^2
  \nonumber
 \\&\quad
  = \hal  \int_\Omega \dt\(  \frac{J q}{R\theta} \) \abs{\lambda^2\dt^4v}^2-\int_\Omega J  \(\lambda^2\[ \dt^4,\frac{q}{R\theta}\] \dt v+\left[\dt^4,  \nabla_\a\right]  q\)\cdot  \lambda^2\dt^4v
 \nonumber
  \\&\qquad +\hal  \int_\Omega \dt\(\frac{ c_vJ }{R q}\) \abs{\lambda\dt^3 \(\frac{  q}{ \theta}\dt\theta\) }^2+\int_\Omega \lambda^2\frac{J}{q}\(-\left[\dt^4, q\diva\right]v\)  \dt^3 \(\frac{  q}{ \theta}\dt\theta\)
  \nonumber
  \\&\qquad+ \hal  \int_\Omega \dt\(\frac{ J   }{ q}\)\lambda^2\abs{ \dt^4 q- \dt^3 \(\frac{  q}{ \theta}\dt\theta\) }^2
  -\int_\Omega  \lambda^2\frac{ J   }{ q} \(\dt^4 q- \dt^3 \(\frac{  q}{ \theta}\dt\theta\) \) \left[\dt^4, q \diva \right] v
  \nonumber
  \\&\quad \leq \mel\E^\lambda.
 \end{align}
Similarly as \eqref{kes12}, we have
\begin{align}\label{kes122}
&
-   \int_\Omega \lambda^2 \frac{\mu J}{q}\dt^4\(\Delta_\a \theta\)\dt^3 \(\frac{  q}{ \theta}\dt\theta\)
\nonumber
\\& \quad= -   \int_\Omega \lambda^2\frac{\mu}{q}\dt^4( J \Delta_\a\theta)\dt^3 \(\frac{  q}{ \theta}\dt\theta\)+
\int_\Omega \lambda^2\frac{\mu}{q}\[\dt^4, J\] \Delta_\a  \theta \dt^3 \(\frac{  q}{ \theta}\dt\theta\)\nonumber
\\& \quad \ge-     \int_\Omega \lambda^2\frac{\mu}{q}\dt^4\p_j(J\a_{ij}\a_{i\ell} \p_\ell \theta)\dt^3 \(\frac{  q}{ \theta}\dt\theta\)- \mel \E^\lambda
\nonumber
\\& \quad =   \int_\Omega \lambda^2\frac{\mu}{\theta} \dt^4(J\a_{ij}\a_{i\ell} \p_\ell \theta)\p_j\dt^4\theta
+\int_\Omega \lambda^2\frac{\mu}{q}\dt^4(J\a_{ij}\a_{i\ell} \p_\ell \theta)\[\p_j\dt^3 ,\frac{  q}{ \theta}\]\dt\theta \nonumber
\\&\qquad+\int_\Omega \lambda^2\p_j\(\frac{\mu}{q}\)\dt^4(J\a_{ij}\a_{i\ell} \p_\ell \theta)\dt^3 \(\frac{  q}{ \theta}\dt\theta\) - \mel \E^\lambda
\nonumber
\\&\quad \ge \int_\Omega\lambda^2\frac{ \mu  J }{\theta}\a_{ij}\a_{i\ell} \p_\ell \dt^4 \theta \p_j\dt^4\theta+
   \int_\Omega  \lambda^2\frac{\mu}{\theta}\[ \dt^4,J\a_{ij}\a_{i\ell}\] \p_\ell \theta \p_j\dt^4\theta-\mel \sqrt{\E^\lambda}\sqrt{\E^\lambda+\D^\lambda }\nonumber
   \\& \quad\ge\hal\int_\Omega\frac{ \mu  J }{ \theta}\abs{\lambda\naba \dt^4   \theta}^2 - \me^\lambda\E^\lambda.
\end{align}
Plugging \eqref{kes122} into \eqref{kesesl1200} and then integrating in time, we conclude
\begin{align}\label{tv1bl}
 \norm{( \lambda\dt^4 q,\lambda^2\dt^4 v,\lambda\dt^4 \theta) }_{0}^2+
 \int_0^T \norm{\lambda\dt^4 \theta }_{1}^2\leq M_0+   T \pekl.
\end{align}

Finally, we also need to estimate for the third order temporal derivatives,  without the weight for $\dt^3\theta$. By applying $ \dt^3$ to the third, fourth and second equations in \eqref{mach} and then taking the $L^2(\Omega)$ inner product with $ \lambda^2J\dt^3v$,  $\lambda \frac{J}{q}   \dt^2 \(\frac{  q}{ \theta}\dt\theta\)$ and $\lambda \frac{J}{q} \(\dt^3q-  \dt^2 \(\frac{  q}{ \theta}\dt\theta\)\)$, respectively, and using the similar arguments leading to \eqref{tv1bl}, we deduce
\beq\label{tv1bl2}
\norm{( \dt^3 q,\lambda\dt^3 v,\dt^3 \theta) }_{0}^2+\int_0^T \norm{\dt^3 \theta }_{1}^2\leq                M_0+   T \pekl .
\eeq

{\bf Step 2: Elliptic estimates}

We now use the structure of the equations in \eqref{mach} to deduce the rest of the estimates.

First, we derive the estimates for $\dt^j\theta$ with $j=2,3$.  Similarly as in the proof of Proposition \ref{prothe}, one has that for $j=1,2,3,$
  \begin{equation}\label{hqeqthl}
  \begin{cases}
  -  \mu   \Delta_\a\dt^j\theta =-\(\frac{c_v  }{R }+1\)\frac{ q}{\theta}\partial_t^{j+1}\theta  +\dt^{j+1} q+\mathfrak{g}^j&\text{in }\Omega
  \\ \nabla_\a \dt^j\theta\cdot\n =\mathfrak{h}^j &\text{on }\Sigma,
  \end{cases}
  \end{equation}
  where
  \begin{equation}
   \mathfrak g^j= \mu\left[\dt^j, \Delta_\a\right]\theta-\(\frac{c_v  }{R }+1\)\left[\dt^j,\dfrac{q}{ \theta}\right]\dt\theta
  \end{equation}
  and
  \beq\label{hqeqthbdl}
  \mathfrak{h}^j:=-\[\dt^j,(\n)^T\a\]\nabla \theta.
  \eeq
  Applying the elliptic estimates to \eqref{hqeqthl} with $j=2$, by \eqref{tv1bl2} and \eqref{fdefmll}, we deduce
  \begin{align}\label{telll}
  \ns{  \dt^2 \theta}_{2}  &\le  P\(\ns{ \eta}_4\) \(\ns{-\(\frac{c_v  }{R }+1\)\frac{ q}{\theta}\partial_t^{3}\theta  +\dt^{3} q+\mathfrak{g}^2}_{0}+\abs{\mathfrak{h}^2 }_{\hal}^2\)\nonumber \\&\le  \mel\( \ns{  \dt^{3}\theta}_{0}+\ns{  \dt^{3}q}_{0}+\mathfrak{F}^\lambda\)\le M_0+  T\pekl
  \end{align}
  and
  \begin{align}
    \int_0^t\ns{  \dt^2 \theta}_{3}  &\le  \int_0^t P\(\ns{ \eta}_4\) \(\ns{-\(\frac{c_v  }{R }+1\)\frac{ q}{\theta}\partial_t^{3}\theta  +\dt^{3} q+\mathfrak{g}^2}_{1}+\abs{\mathfrak{h}^2 }_{\frac{3}{2}}^2\)\nonumber \\&\le  \mel\int_0^t\( \ns{  \dt^{3}\theta}_{1}+\ns{  \dt^{3}q}_{1}+\ns{  \dt v}_{3}+\mathfrak{F}^\lambda\)\le M_0+  T\pekl.
   \end{align}
Similarly, by \eqref{tv1bl} and \eqref{fdefmll},
   \begin{align}
  \ns{ \lambda \dt^3 \theta}_{2}  &\le    P\(\ns{ \eta}_4\)\lambda^2 \(\ns{-\(\frac{c_v  }{R }+1\)\frac{ q}{\theta}\partial_t^{4}\theta  +\dt^{4} q+\mathfrak{g}^3}_{0}+\abs{\mathfrak{h}^3 }_{\frac{1}{2}}^2\)\nonumber \\&\le  \mel  \(  \ns{ \lambda \dt^{4}\theta}_{0}+ \ns{  \lambda \dt^{4}q}_{0}+\lambda^2\ns{  \dt^2 v}_{2}+\mathfrak{F}^\lambda\)
  \nonumber \\&\le M_0+  T\pekl+\lambda^2\(M_0+  T\pekl\)\mathcal{G}^\lambda(T).
   \end{align}

Next, we derive the estimates of $q$.  It follows from the third equation in \eqref{mach} and \eqref{tv1bl} that
\begin{align}\label{fel1}
 \ns{\nabla\dt^3 q}_0& =\ns{ \lambda^2(\nabla\eta )^T\frac{  q}{R\theta}\dt^4  v+\lambda^2\[\dt^3, (\nabla\eta )^T\frac{  q}{R\theta}\]\dt  v}_0
\nonumber\\& \le \mel\(\ns{ \lambda^2 \dt^4 v}_0+\mathfrak{F}^\lambda\)\le M_0+   T \pekl .
 \end{align}
On the other hand, it follows similarly as in the proof of Proposition \ref{f1b} that for $j=0,1,2,$
  \begin{equation}\label{eqforqhl2}
  \begin{cases}
  -\frac{1}{\lambda^2}\diva\( R\theta \naba \dt^j q\)=-  \dt^{j+2} q+ \frac{q}{ \theta }\dt^{j+2}\theta+\mathfrak{f}^j &\text{in }\Omega
  \\\dt^j q=\dt^j \bar p &\text{on }\Sigma,
 \end{cases}
  \end{equation}
  where
  \begin{align}
  \mathfrak{f}^j:=&\frac{1}{\lambda^2}\[\dt^j,\diva\] \(R\theta \naba  q\)+\frac{1}{\lambda^2}\diva\( \[\dt^j,R\theta \naba\]  q\)\nonumber
  \\&+\[\dt^{j+1}, \frac{q}{\theta}\]\partial_t \theta +\dt^j\( \naba q\cdot\dt v-\[\dt,q\diva\]v\) .
  \end{align}
Applying the elliptic estimates  to \eqref{eqforqhl2} with $j=2$ and by \eqref{tv1bl}, we have
  \begin{align}
          \label{3ql}
      \ns{  \frac{1}{\lambda}  \dt^2 q}_{2} &\le   P\(\ns{ \eta}_4,\ns{\theta}_3\)  \lambda^2\ns{-  \dt^{4} q+  \frac{q}{ \theta }\dt^{4}\theta+ \mathfrak{f}^2}_{0}\nonumber\\&\le  \mel  \(  \ns{  \lambda \dt^{4}q}_{0} +\ns{ \lambda \dt^{4}\theta}_{0}+ \mathfrak{F}^\lambda\)\le M_0+  T\pekl.
  \end{align}
Similarly, we have  that by \eqref{tv1bl2},
\begin{equation} \label{3ql22}
        \int_0^t\ns{\frac{1}{\lambda^2}\dt  q}_3\leq \mel\int_0^t\ns{-\dt^3q+\frac{q}{\theta}\dt^3\theta+\mathfrak f^1}_1\le M_0+  T\pekl
\end{equation}
and
\begin{equation}\label{3ql23}
  \ns{\frac{1}{\lambda^2}\dt q}_2\leq \mel \ns{-\dt^3q+\frac{q}{\theta}\dt^3\theta+\mathfrak f^1}_0\le M_0+  T\pekl,
\end{equation}
and by \eqref{3ql} and \eqref{telll},
\begin{equation}\label{3ql25}
        \ns{\frac{1}{\lambda^2}\nabla q}_3\le \mel \ns{- \dt^{2} q+ \frac{q}{ \theta }\dt^{2}\theta+\mathfrak{f}^0}_{2}\le M_0+  T\pekl.
\end{equation}
Note also that by \eqref{fdefmll},
\begin{equation}\label{3ql24}
  \ns{\frac{1}{\lambda}\dt  q}_3\leq \mel \lambda^2 \ns{- \dt^3q+ \frac{q}{\theta}\dt^3\theta+ \mathfrak f^1}_1\le M_0+  (\lambda^2+T)\pekl.
\end{equation}

Now we derive the estimates of $v$.
It follows from the third equation in \eqref{mach} that for $j=1,2,3,$
\beq\label{ves11l}
 \dt^j v
 =\frac{1}{\lambda^2} \dfrac{R\theta}{q}\nabla_\a \dt^{j-1} q+ \[\dt^{j-1},\dfrac{R\theta}{q} \nabla_\a\]  q.
 \eeq
Thus we have that by \eqref{3ql},
\beq
  \ns{\lambda \dt^3 v}_1 \leq \mel \(\ns{\frac{1}{\lambda} \dt^2 q}_2 +  \mathfrak{F}^\lambda \)\le M_0+  T\pekl,
\eeq
by \eqref{3ql22},
\beq
 \int_0^t \ns{\dt^2v}_2\leq \mel\int_0^t\(\ns{\frac{1}{\lambda^2} \dt q}_3+  \mathfrak{F}^\lambda \)\le M_0+  T\pekl.
\eeq
Similarly, by \eqref{3ql23}, \eqref{3ql24} and \eqref{3ql25}, we have
\begin{align}
  \ns{\dt^2v}_1+\ns{\lambda\dt^2v}_2+\ns{ \dt v}_3
  &\leq \mel\(\ns{\frac{1}{\lambda^2} \dt q}_2+\ns{\frac{1}{\lambda} \dt q}_3+  \ns{\frac{1}{\lambda^2} \nabla q}_3+\mathfrak{F}^\lambda \)
  \nonumber\\&\le M_0+  T\pekl.
\end{align}
On the other hand, by using the  Hodge-type  estimates and \eqref{t1estl}, similarly as \eqref{vvv1}--\eqref{ves12}, we deduce
\beq\label{ves12l}
\norm{v}_4^2 \le P\(\ns{ \eta}_4\) \( \norm{v}_0^2+\norm{\bp^4 v}_0^2+\norm{\curl_\a v}_3^2+\norm{\Div_\a   v}_3^2 \)\leq  M_0+  T\pekl.
\eeq

Lastly, we estimate for $\dt \theta$.  Applying the elliptic estimates to \eqref{hqeqthl} with $j=0$ and by \eqref{telll}, \eqref{3ql} and \eqref{ves12l}, we have
\beq
\ns{\dt\theta}_4  \le  \mel\(\ns{  \dt^{2}\theta}_{2}+\ns{  \dt^{2}q}_{2}+\ns{v}_{4}+  \mathfrak{F}^\lambda\) \le M_0+  T\pekl,
\eeq
and by  \eqref{fdefmll}, we have
 \begin{align}
  \ns{ \lambda \dt^2 \theta}_{3}  &\le    P\(\ns{ \eta}_4\)\lambda^2 \(\ns{-\(\frac{c_v  }{R }+1\)\frac{ q}{\theta}\partial_t^{3}\theta  +\dt^{3} q+\mathfrak{g}^2}_{1}+\abs{\mathfrak{h}^2 }_{\frac{3}{2}}^2\)\nonumber \\&\le  \mel  \(   \lambda^2 \ns{    \dt^{3}q}_{1}+\lambda^2\ns{  \dt v}_{3}+\mathfrak{F}^\lambda\)\le M_0+  T\pekl.
   \end{align}

Consequently, summing up all the estimates above leads to
\beq\label{concl1d}
\fgl(T)\leq \Mzkd +  T\pekl+\lambda^2\(M_0+  T\pekl\)\mathcal{G}^\lambda(T).
\eeq
From \eqref{concl1d}, there exist a $T_0>0$ and a $\lambda_0>0$, depending on $\Mzkd$ but not on $\lambda$, such that for $0<\lambda\le \lambda_0$,
\begin{equation}
\fgl(T_0)\leq 2 \Mzkd.
\end{equation}
This yields \eqref{enestiu} by redefining the polynomial of $\Mzkd$. \eqref{enestiu} also implies that  \eqref{apriori112} and \eqref{apriori11213} hold on $[0,T_0]$ by restricting $T_0$ smaller if necessary.

Now, we recover the dependence of the solutions  to \eqref{mach} on $\lambda$ as $(\eta^\lambda,q^\lambda, v^\lambda, \theta^\lambda)$. The uniform estimate \eqref{enestiu} implies that they are indeed the solutions to \eqref{mach} on $[0,T_0]$, and as $\lambda\rightarrow 0$, $
\(\eta^\lambda, \dfrac{1}{\lambda^2}(q^\lambda-\bar p), v^\lambda,\theta^\lambda\)$ converge to the limit $ \(\eta, \pi, v,\theta\)
$ (and $q^\lambda$ converges to $\bar p$), up to extraction of a subsequence, in a strong sense which is more than sufficient for passing to the limit  in \eqref{mach} as $\lambda\rightarrow 0$. We then find that $ \(\eta, \pi, v,\theta\)
$ is a strong solution to \eqref{machlimit} on $[0,T_0]$ that satisfies the estimate
\eqref{eulerlimiten}. Note that one can prove, as for \eqref{EFv}, the uniqueness of solutions to \eqref{machlimit} satisfying
\eqref{eulerlimiten}. This implies in turn that the whole family $
\(\eta^\lambda, \dfrac{1}{\lambda^2}(q^\lambda-\bar p), v^\lambda,\theta^\lambda\)$  converges to $ \(\eta, \pi, v,\theta\)
$.
\end{proof}

\appendix

\section{}

\subsection{Properties of smoothing operators}

We recall the classical estimates \cite{A}:
\begin{align}
&\abs{\Lambda_{\kappa}h}_s\ls \abs{h}_s,\ s\ge 0, \label{test3}\\
&\abs{\bar\partial\Lambda_{\kappa}h}_0\ls \kappa^{s-1}\abs{h}_s,\  0\le s\le 1,\label{loss}
\\&\abs{\Lambda_{\kappa}h-h}_{0} \rightarrow0\text{ as }\kappa\rightarrow0,
\\&\abs{\Lambda_{\kappa}h-h}_{0}\ls \kappa^s \abs{h}_{s},\  0\le s\le 1.\label{lossew}
\end{align}

The followings are some useful commutator estimates:
\begin{lemma}\label{comm11}
For $\kappa>0$, we have
\begin{align}
&\abs{[\Lambda_{\kappa}, h]g}_0\ls  \norm{h}_{L^\infty(\Sigma)}|g|_0,\label{es0-0}\\
&\abs{[\Lambda_{\kappa}, h]\bar\partial g}_s\ls \norm{h}_{C^1(\Sigma)}|g|_s, \  0\le s\le 1,\label{es1-1/2}\\
&\abs{[\Lambda_{\kappa}, h]\bar\partial g}_0\ls  \kappa^s \norm{h}_{C^1(\Sigma)}|g|_s, \  0\le s\le 1.\label{es0-3}
\end{align}
\end{lemma}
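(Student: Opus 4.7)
The plan is to prove all three inequalities by exploiting the convolution representation $\Lambda_\kappa g = \rho_\kappa\ast g$ of the horizontal mollifier on $\mathbb{T}^2$, with $\rho_\kappa(z)=\kappa^{-2}\rho(z/\kappa)$ a standard approximate identity, and then to deduce the higher-order/fractional bounds by differentiation-and-interpolation.

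First I would dispatch \eqref{es0-0} in one line: since $\Lambda_\kappa$ is bounded on $L^2(\Sigma)$ with norm $1$, the triangle inequality gives $|[\Lambda_\kappa,h]g|_0\le|\Lambda_\kappa(hg)|_0+|h\Lambda_\kappa g|_0\le 2\|h\|_{L^\infty(\Sigma)}|g|_0$. Next, for the $s=0$ case of \eqref{es1-1/2}, the key is the cancellation in the kernel representation
\[
[\Lambda_\kappa,h]\bar\partial g(x)=\int_{\mathbb{T}^2}\rho_\kappa(x-y)[h(y)-h(x)]\bar\partial g(y)\,dy,
\]
which, after an integration by parts in $y$, becomes
\[
\int_{\mathbb{T}^2}\bar\partial_x\rho_\kappa(x-y)[h(y)-h(x)]g(y)\,dy-\Lambda_\kappa(\bar\partial h\cdot g)(x).
\]
The second piece is controlled by $\|\bar\partial h\|_{L^\infty}|g|_0$; for the first, I use the mean-value bound $|h(y)-h(x)|\le\|\nabla h\|_{L^\infty}|x-y|$ together with the fact that $|x-y|\,|\bar\partial_x\rho_\kappa(x-y)|$ is a rescaling of the $L^1(\mathbb{R}^2)$ function $|z||\bar\partial\rho(z)|$, so Young's inequality yields a bound $C\|\nabla h\|_{L^\infty}|g|_0$. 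Summing gives the $s=0$ bound; the $s=1$ bound follows from the commutator identity
\[
\bar\partial[\Lambda_\kappa,h]\bar\partial g=[\Lambda_\kappa,\bar\partial h]\bar\partial g+[\Lambda_\kappa,h]\bar\partial^2 g,
\]
in which the first term is handled by \eqref{es0-0} applied to $\bar\partial h$, and the second by the $s=0$ case already established with $g$ replaced by $\bar\partial g$. Real interpolation between the endpoints $s=0$ and $s=1$ (with constants independent of $\kappa$) delivers the intermediate range.

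Estimate \eqref{es0-3} is proved similarly at the endpoints. For $s=0$ it is \eqref{es1-1/2} with $s=0$. For $s=1$ I bypass the integration by parts and instead estimate the original kernel directly:
\[
|[\Lambda_\kappa,h]\bar\partial g(x)|\le\|\nabla h\|_{L^\infty}\int_{\mathbb{T}^2}\rho_\kappa(x-y)|x-y||\bar\partial g(y)|\,dy,
\]
after which the observation that $\||z|\rho_\kappa\|_{L^1}=\kappa\int|w|\rho(w)\,dw=C\kappa$ combined with Young's inequality and $|\bar\partial g|_0\le|g|_1$ yields the bound $C\kappa\|h\|_{C^1}|g|_1$. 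Interpolation between $s=0$ and $s=1$ then gives the full range $0\le s\le 1$.

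No single step is really hard: the only delicate point is making the cancellation $h(y)-h(x)=O(|x-y|)$ pay for exactly one factor of $\kappa$ (in \eqref{es0-3}) or one derivative (in \eqref{es1-1/2}), but not both at once. The main bookkeeping obstacle will be ensuring that all constants produced in the endpoint estimates are independent of $\kappa$ so that interpolation is legitimate; this is automatic from Young's inequality, but should be mentioned explicitly.
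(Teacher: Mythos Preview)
Your argument is correct and follows the same route the paper indicates: it refers to \cite{GW} for \eqref{es0-0} and \eqref{es1-1/2} (where exactly your kernel-representation/mean-value/Young argument appears), and then says \eqref{es0-3} holds at $s=0,1$ ``in a similar way'' with the intermediate $s$ obtained by interpolation. Your write-up is simply a fully unpacked version of that sketch, with all $\kappa$-independent constants made explicit so that the interpolation step is legitimate.
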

\begin{proof}
We refer to Lemma 3.5 of \cite{GW} for the proof of \eqref{es0-0} and \eqref{es1-1/2}. And the estimates \eqref{es0-3} when $s=0$ and $s=1$ follow in a similar way, and the general case $0\le s\le 1$ follows then by the interpolation.
\end{proof}

\subsection{Geometric estimates}
It is direct to check that
\begin{align}
\label{dJ}
&\partial J= J{\mathcal{A}}_{ij}\p \p_j\eta_i,\\
&\partial \a_{ij}  = -\a_{i\ell}\partial \p_\ell\eta_m\a_{mj},
\label{partialF}
\end{align}
where $\partial$ denotes $\pa_j$ or $\partial_t$. It then follows the Piola identity:
\begin{equation}\label{polia}
\partial_j\left(J{\mathcal{A}}_{ij}\right) =0.
\end{equation}

We have the following  trace estimates.
\begin{lemma}It holds that
\begin{equation}\label{ggdd}
\abs{  f }_{1/2}^2\leq P\(\norm{\eta}_4^2\)\(\ns{f}_0+\ns{\bp f}_0+\ns{\diva f}_0+\ns{\curl_\a f}_0\).
\end{equation}
\end{lemma}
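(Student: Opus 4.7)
The plan is to reduce \eqref{ggdd} to the standard trace theorem and then recover the missing normal-derivative information via a pointwise Hodge-type linear-algebraic identity.

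First I would invoke the standard trace theorem on $\Omega$, which yields $|f|_{1/2}^2 \lesssim \|f\|_1^2 = \|f\|_0^2 + \|\bp f\|_0^2 + \|\pa_3 f\|_0^2$. Hence it suffices to establish the pointwise-in-$\Omega$ inequality
\[
|\pa_3 f(x)|^2 \le P(|\na\eta(x)|)\(|\bp f(x)|^2 + |\diva f(x)|^2 + |\curl_\a f(x)|^2\),
\]
and then integrate over $\Omega$, controlling $\|\na\eta\|_{L^\infty(\Omega)}$ by $\|\eta\|_4$ via the Sobolev embedding $H^3(\Omega)\hookrightarrow L^\infty(\Omega)$.

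For the pointwise identity, I would split off the horizontal derivatives: writing
\[
\diva f = \a_{i3}\pa_3 f_i + \a_{i\alpha}\pa_\alpha f_i,\qquad (\curl_\a f)_k = \epsilon_{kij}\a_{i3}\pa_3 f_j + \epsilon_{kij}\a_{i\alpha}\pa_\alpha f_j,
\]
with $\alpha\in\{1,2\}$ summed. The $\pa_\alpha f$ contributions are absorbed into $|\bp f|$, leaving a $4\times 3$ linear system for the unknowns $(\pa_3 f_1,\pa_3 f_2,\pa_3 f_3)$ with coefficient matrix
\[
M = \begin{pmatrix} \a_{13} & \a_{23} & \a_{33} \\ 0 & -\a_{33} & \a_{23} \\ \a_{33} & 0 & -\a_{13} \\ -\a_{23} & \a_{13} & 0 \end{pmatrix}.
\]
A direct expansion shows that the three $3\times 3$ minors of $M$ (obtained by deleting in turn each of the last three rows) have determinants $\a_{k3}|\a_{\cdot 3}|^2$ up to sign, for $k=1,2,3$, whose squares sum to $|\a_{\cdot 3}|^6$. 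Since $\a = (\na\eta)^{-T}$ is invertible (indeed $\a e_3 = \n$ and $J=\det\na\eta>0$ is a standing assumption whenever $\a$ is in use), one has $|\a_{\cdot 3}|>0$ pointwise, so at least one such minor is nondegenerate. Inverting this minor yields the desired pointwise bound with coefficients that are rational functions of $\a$ with denominator controlled by $|\a_{\cdot 3}|^{-2}$, hence by a polynomial in $|\na\eta|$ and $J^{-1}$.

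The only real technical point is keeping book on the $\eta$-dependence so that, after integration, the pointwise coefficients collapse into a single factor $P(\|\eta\|_4^2)$. This is automatic once one notes that $\a$ is a smooth function of $\na\eta$ with $|\na\eta|\lesssim \|\eta\|_3\lesssim \|\eta\|_4$ by Sobolev embedding, and that $J$ being bounded below by a positive constant (which is in force throughout the paper) makes $|\a_{\cdot 3}|^{-1}$ uniformly bounded. No genuine obstacle arises; the proof is essentially a bookkeeping exercise combining the classical trace theorem with this linear-algebra identity.
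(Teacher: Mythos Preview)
Your argument is correct, and it takes a genuinely different route from the paper. The paper works entirely on the boundary: for a test function $\varphi\in H^{1/2}(\Sigma)$ with an extension $\tilde\varphi$ to $\Omega$, it expresses $\int_\Sigma \bp f\cdot\n\,\varphi$ via the divergence theorem and integration by parts in the tangential direction, thereby bounding $|\bp f\cdot\n|_{-1/2}$ by the right-hand side of \eqref{ggdd}; it does the same for $|\bp f\times\n|_{-1/2}$, $|f\cdot\n|_{-1/2}$, and $|f\times\n|_{-1/2}$, and then reconstructs $f$ and $\bp f$ in $H^{-1/2}(\Sigma)$ using the identity $|\n|^2 g = (g\cdot\n)\n + \n\times(g\times\n)$, finishing with $|f|_{1/2}\le |f|_{-1/2}+|\bp f|_{-1/2}$. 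Your approach is more elementary: you invoke the standard trace inequality $|f|_{1/2}\lesssim\|f\|_1$ and recover $\pa_3 f$ pointwise from $\diva f$, $\curl_\a f$, and $\bp f$ by linear algebra. In fact your coefficient matrix $M$ satisfies $M^TM=|\a_{\cdot3}|^2 I$, so the minor-by-minor inversion can be replaced by the single identity $|\pa_3 f|=|\a_{\cdot3}|^{-1}|M\pa_3 f|$, and the lower bound $|\a_{\cdot3}|=|\n|\ge |\nabla\eta|^{-1}$ (from $e_3=(\nabla\eta)^T\n$) makes the constant explicit. Your route actually proves the stronger interior bound $\|f\|_1^2\le P(\|\eta\|_4^2)(\cdots)$, which is the $\ell=1$ case of the paper's subsequent Hodge lemma and would let you bypass the boundary machinery there as well; the paper's duality approach, by contrast, stays strictly at the level of trace spaces.
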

\begin{proof}
Let $\varphi\in H^{1/2}(\Sigma)$ and $\tilde \varphi\in H^{1}(\Omega)$ be a bounded extension. By the divergence theorem,
\begin{align}
\int_\Sigma \bp f\cdot \n \varphi=\int_\Omega J \diva \(J^{-1}\bp f \tilde\varphi \)
=\int_\Omega   \diva  \bp f \tilde \varphi+J \bp f \cdot\nabla_\a \(J^{-1}\tilde\varphi \).
\end{align}
On the other hand, by integrating by parts over $\Omega$, we have
\begin{align}
 \int_\Omega   \diva  \bp f \tilde \varphi&= \int_\Omega \bp (  \diva   f )\tilde \varphi-\int_\Omega\bp \a_{ij}\p_j f_i \tilde \varphi
 \nonumber\\&=-\int_\Omega    \diva   f  \bp\tilde \varphi-\int_\Sigma\bp \n\cdot f   \varphi+\int_\Omega f_i\p_j (\bp \a_{ij} \tilde \varphi) .
\end{align}
We then conclude that
\begin{equation}\label{ggdd11}
\abs{\bp f\cdot \n}_{-1/2}^2\leq P\(\norm{\eta}_4^2\)\(\ns{f}_0+\ns{\bp f}_0+\ns{\diva f}_0+\abs{  f}_{-1/2}^2\).
\end{equation}
In the same way, we have
\begin{equation}\label{ggdd12}
\abs{  f\cdot \n}_{-1/2}^2\leq P\(\norm{\eta}_4^2\)\(\ns{f}_0+ \ns{\diva f}_0 \),
\end{equation}
and similarly,
\begin{equation}\label{ggdd13}
\abs{  f\times \n}_{-1/2}^2\leq P\(\norm{\eta}_4^2\)\(\ns{f}_0+ \ns{\curl_\a f}_0 \).
\end{equation}
Hence, by using the vector identity
\beq\label{ggdd1100123}
|\n|^2f=f\cdot\n \n+\n\times(f\times\n),
\eeq
it follows from \eqref{ggdd11}--\eqref{ggdd13} that
\begin{equation}\label{ggdd1100}
\abs{\bp f\cdot \n}_{-1/2}^2\leq P\(\norm{\eta}_4^2\)\(\ns{f}_0+\ns{\bp f}_0+\ns{\diva f}_0+\ns{\curl_\a f}_0\).
\end{equation}
Similarly,
\begin{equation}\label{ggdd1102}
\abs{\bp f\times \n}_{-1/2}^2\leq P\(\norm{\eta}_4^2\)\(\ns{f}_0+\ns{\bp f}_0+\ns{\diva f}_0+\ns{\curl_\a f}_0\).
\end{equation}
Therefore, \eqref{ggdd} follows from \eqref{ggdd1100}--\eqref{ggdd1102} by using again the identity \eqref{ggdd1100123} and noting that $\abs{f}_{1/2}\le \abs{f}_{-1/2}+\abs{\bp f}_{-1/2}$.
\end{proof}
We have the Hodge-type estimates.
\begin{lemma}\label{hhle}
Let $k\ge 4$. It holds that
\begin{equation}\label{hodgees}
 \ns{f}_\ell\leq P\(\ns{\eta}_k\)\(\ns{f}_0+\ns{\bp^\ell f}_0+\ns{\curl_\a f}_{\ell-1}+\ns{\diva f}_{\ell-1}\),\ 1\le \ell\le k.
\end{equation}
\end{lemma}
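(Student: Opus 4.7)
The plan is to prove this by induction on $\ell$, using a pointwise algebraic identity that expresses the normal derivative $\partial_3 f$ in terms of $\diva f$, $\curl_\a f$, and tangential derivatives $\bar\partial f$. Writing
\[
\diva f = \a_{i3}\partial_3 f_i + \a_{i\beta}\partial_\beta f_i, \qquad (\curl_\a f)_m = \epsilon_{mij}\a_{j3}\partial_3 f_i + \epsilon_{mij}\a_{j\beta}\partial_\beta f_i \quad (m=1,2,3),
\]
one views these four scalar equations as a linear system for $X=(\partial_3 f_1,\partial_3 f_2,\partial_3 f_3)^T$ with explicit tangential remainders. A direct computation shows that the Gram matrix of the four coefficient vectors $\alpha=(\a_{13},\a_{23},\a_{33})$, $\beta=(0,-\a_{33},\a_{23})$, $\gamma=(\a_{33},0,-\a_{13})$, $\delta=(-\a_{23},\a_{13},0)$ equals $|\mathcal{A}_{\cdot 3}|^2\,I$, where $\mathcal{A}_{\cdot 3}$ is the third column of $\mathcal{A}$. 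Combined with the geometric identity $\mathcal{A}_{\cdot 3}=(\partial_1\eta\times\partial_2\eta)/J$ and the triple-product estimate $J\le|\partial_3\eta|\,|\partial_1\eta\times\partial_2\eta|$, this yields the crucial lower bound $|\mathcal{A}_{\cdot 3}|^{-2}\le\|\nabla\eta\|_{L^\infty}^2\le P(\|\eta\|_k)$ (valid for $k\ge 4$ via $H^2\hookrightarrow L^\infty$ in three dimensions), and hence the pointwise estimate
\[
|\partial_3 f|^2 \le P(\|\eta\|_k)\bigl((\diva f)^2+|\curl_\a f|^2+|\bar\partial f|^2\bigr).
\]
Integrating over $\Omega$ delivers the $\ell=1$ case.

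For the inductive step $\ell-1\Rightarrow\ell$, the plan is to estimate each $\partial^\alpha f$ with $|\alpha|=\ell$. When $\alpha_3=0$, $\|\partial^\alpha f\|_0\le\|\bar\partial^\ell f\|_0$ directly. When $\alpha_3\ge 1$, I would write $\partial^\alpha=\partial_3\partial^{\alpha-e_3}$ and apply the pointwise identity to $g:=\partial^{\alpha-e_3}f$, so that
\[
\|\partial_3 g\|_0^2 \le P(\|\eta\|_k)\bigl(\|\diva g\|_0^2+\|\curl_\a g\|_0^2+\|\bar\partial g\|_0^2\bigr).
\]
The principal parts $\partial^{\alpha-e_3}(\diva f)$ and $\partial^{\alpha-e_3}(\curl_\a f)$ are bounded by $\|\diva f\|_{\ell-1}$ and $\|\curl_\a f\|_{\ell-1}$. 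The commutators $[\partial^{\alpha-e_3},\diva]f$ and $[\partial^{\alpha-e_3},\curl_\a]f$ expand as sums $\partial^{\gamma}\mathcal{A}\cdot\partial^{(\alpha-e_3)-\gamma}\nabla f$ with $|\gamma|\ge 1$, so the total derivatives on $f$ amount to at most $\ell-1$; these terms are bounded by $P(\|\eta\|_k)\|f\|_{\ell-1}$ via Sobolev product estimates (switching to mixed H\"older exponents like $\|\cdot\|_{L^3}\|\cdot\|_{L^6}$ when neither factor fits in $L^\infty$), which the inductive hypothesis then absorbs. The tangential term $\|\bar\partial g\|_0$ is handled by iterating the same argument to reduce the normal-derivative count; after $\alpha_3$ rounds, it becomes a purely tangential derivative of order $\ell$, bounded by $\|\bar\partial^\ell f\|_0$ modulo accumulated commutators. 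Finally, the Fourier-series inequality $\|\bar\partial^m f\|_0^2\le\|f\|_0^2+\|\bar\partial^\ell f\|_0^2$ valid on $\mathbb{T}^2\times(0,1)$ for $0\le m\le\ell$ converts the $\|\bar\partial^{\ell-1}f\|_0$ appearing in the IH into the target RHS.

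The main obstacle is managing the commutator cascade at the borderline case $k=4$, where $\mathcal{A}\in H^3$ and the highest-order term $\partial^{\ell-1}\mathcal{A}\cdot\nabla f$ cannot be estimated by $L^\infty\cdot L^2$ alone. The resolution relies on H\"older/Sobolev interpolation: for example, $\|\partial^2\mathcal{A}\cdot\nabla^2 f\|_0\le\|\partial^2\mathcal{A}\|_{L^3}\|\nabla^2 f\|_{L^6}\le C P(\|\eta\|_k)\|f\|_{\ell-1}$ via the three-dimensional embedding $H^1\hookrightarrow L^6$. The geometric fact $|\mathcal{A}_{\cdot 3}|^{-2}\le P(\|\eta\|_k)$, which holds without any smallness assumption on $\mathcal{A}-I$ thanks to the interpretation of $\mathcal{A}_{\cdot 3}$ as a renormalized tangent-plane area element, is what makes the entire estimate uniform in $\eta$ given only $J\ge c_0/2>0$.
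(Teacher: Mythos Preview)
Your approach is correct and genuinely different from the paper's. The paper proves the $\ell=1$ case by an energy argument: starting from $-\Delta_\a f=\curl_\a\curl_\a f-\nabla_\a\diva f$, pairing with $Jf$, integrating by parts, and controlling the resulting boundary integrals via the auxiliary trace estimate $\abs{f}_{1/2}^2\le P(\ns{\eta}_4)(\ns{f}_0+\ns{\bp f}_0+\ns{\diva f}_0+\ns{\curl_\a f}_0)$ proved just before. Its inductive step applies the $\ell$-case to $\bp f$ and the base case to $\nabla^\ell f$, with a Sobolev interpolation at the end. Your route is purely algebraic and interior: the key identity $M^TM=|\a_{\cdot 3}|^2 I_3$ (what you call the ``Gram matrix'' statement; note it is the $3\times 3$ matrix $M^TM$ for the $4\times 3$ coefficient matrix $M$, not the $4\times 4$ Gram matrix of the rows, which is necessarily singular) gives the pointwise recovery of $\partial_3 f$ and bypasses all boundary terms and the trace lemma entirely. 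Your geometric bound $|\a_{\cdot 3}|^{-1}\le|\partial_3\eta|$ from $\a_{\cdot 3}=J^{-1}\partial_1\eta\times\partial_2\eta$ and the triple-product inequality is elegant and avoids any smallness hypothesis. The trade-off is that your inductive step carries more commutator bookkeeping (the descent in $\alpha_3$ for fixed $|\alpha|=\ell$, plus the mixed H\"older estimates at the borderline $\ell=k$), whereas the paper's induction is structurally cleaner once the trace lemma is in hand. Both approaches are valid; yours is more elementary and self-contained, the paper's more in the spirit of classical div--curl estimates.
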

\begin{proof}
We first prove \eqref{hodgees} for $\ell=1$. Recall that
 \begin{equation}\label{app1}
-\Delta_\a f = \curl_\a\curl_\a f-\nabla_\a\diva f.
\end{equation}
Taking $L^2(\Omega)$ inner product of \eqref{app1} with $J f$ and integrating by parts over $\Omega$, we obtain
\begin{align}\label{app3}
&- \int_\Sigma J\(\n\cdot\nabla_\a\) f\cdot f+\int_\Omega J\abs{\nabla_\a f}^2
 \nonumber\\&\quad=\int_\Sigma J  \( \n\times \curl_\a\) f \cdot f +  \int_\Omega J\abs{\curl_\a f}^2-\int_\Sigma J   \diva f \n \cdot f+\int_\Omega J\abs{\diva f}^2.
\end{align}
Note that
\begin{equation}
(\n\times \curl_\a f)_i=  \pa^\a_{i}f\cdot \n  -\n\cdot \nabla_\a f_i.
\end{equation}
Thus, recalling that $\n=\a_{i3}$, we have
\begin{equation}
(\n\cdot \nabla_\a f+\n\times \curl_\a f)\cdot f-f\cdot \n\diva f= f_i \a_{i\alpha}\pa_\alpha f_k\n_k- f_k\n_k \a_{i\alpha}\pa_\alpha f_i.
\end{equation}
This yields
\begin{align}\label{app4}
&\int_\Sigma J\n\cdot\nabla_\a f\cdot f+\int_\Sigma J \( \n\times \curl_\a\) f \cdot f-\int_\Sigma J   \diva f\n \cdot f
\nonumber\\ &\quad \le P\(\ns{\eta}_4\) \abs{\bp f }_{-1/2}\abs{f}_{1/2}\le P\(\ns{\eta}_4\)\abs{f}_{1/2}^2.
\end{align}
It then follows from \eqref{app3} and \eqref{app4} that
 \begin{equation}
\ns{f}_1\leq P\(\ns{\eta}_4\)\(\ns{\curl_\a f}_0+\ns{\diva f}_0+\abs{f}_{1/2}^2\).
\end{equation}
This together with \eqref{ggdd} implies \eqref{hodgees} with $\ell=1$.

 Now suppose that \eqref{hodgees} holds for $1\le\ell\le k-1$, we prove it holds for $\ell+1$.
Applying \eqref{hodgees} with $\ell$ to $\bp  f$, we find
\begin{align}\label{app42}
 \ns{\bp  f}_\ell&\leq P\(\ns{\eta}_k\)\(\ns{\bp  f}_0+\ns{\bp^{\ell+1} f}_0+\ns{\curl_\a \bp  f}_{\ell-1}+\ns{\diva \bp  f}_{\ell-1}\)\nonumber
 \\&\leq P\(\ns{\eta}_k\)\(\ns{f}_\ell+\ns{\bp^{\ell+1} f}_0+\ns{\curl_\a f}_\ell+\ns{\diva f}_\ell \).
\end{align}
Then applying \eqref{hodgees} with $\ell=1$ to $\nabla^\ell  f$ and \eqref{app42} implies
\begin{align}
 &\ns{ f}_{\ell+1}= \ns{ f}_{\ell-1}+ \ns{ \nabla^\ell f}_{1}
 \nonumber
 \\&\quad\leq P\(\ns{\eta}_k\)\(\ns{ f}_{\ell-1}+ \ns{ \nabla^\ell f}_{0}+\ns{\bp  \nabla^\ell f}_0 +\ns{\curl_\a \nabla^\ell  f}_0+\ns{\diva \nabla^\ell  f}_0\)\nonumber
 \\&\quad\leq P\(\ns{\eta}_k\)\(\ns{f}_\ell+\ns{\bp  f}_\ell+\ns{\curl_\a f}_\ell+\ns{\diva f}_\ell \).
\end{align}
This together with the Sobolev interpolation yields \eqref{hodgees} for $\ell+1$.
\end{proof}


\vspace{0.5cm}

\end{document}